\newcommand{\pp}{\mathbb{P}}
\renewcommand{\H}{\mathcal{H}}
\renewcommand{\O}{\mathcal{O}}
\renewcommand{\L}{\mathcal{L}}
\newcommand{\M}{\mathcal{M}}
\newcommand{\W}{\mathcal{W}}
\newcommand{\Sym}{\operatorname{Sym}}
\newcommand{\Pic}{\operatorname{Pic}}
\newcommand{\Hilb}{\operatorname{Hilb}}
\newcommand{\End}{\operatorname{End}}
\newcommand{\ev}{\operatorname{ev}}
\newcommand{\pr}{\operatorname{pr}}
\newcommand{\eledit}[1]{{\color{teal} \sf  #1}}
\newtheorem{thm}{Theorem}[section]
\newtheorem{ithm}{Theorem}
\newtheorem{lem}[thm]{Lemma}
\newtheorem{conj}[thm]{Conjecture}
\newtheorem{cor}[thm]{Corollary}
\newcommand{\defi}[1]{\textsf{#1}} 
\newcommand{\we}{\mathcal{W}^{\vec{e}}_{\mathrm{BN}}}
\theoremstyle{definition}
\newtheorem{defin}[thm]{Definition}
\theoremstyle{remark}
\newtheorem{rem}[thm]{Remark}
\title{The embedding theorem in Hurwitz--Brill--Noether Theory}
\author{Kaelin Cook-Powell, David Jensen, Eric Larson, Hannah Larson, and Isabel Vogt}
\begin{document}

\maketitle

\begin{abstract}
We generalize the Embedding Theorem of Eisenbud--Harris from classical Brill--Noether theory
to the setting of Hurwitz--Brill--Noether theory.

More precisely, in classical Brill--Noether theory, the embedding theorem states that
a general linear series of degree \(d\)
and rank \(r\) on a general curve of genus \(g\) is an embedding if \(r \geq 3\).
If \(f \colon C \to \pp^1\) is a general cover of degree \(k\),
and \(\mathcal{L}\) is a line bundle on \(C\),
recent work of the authors shows that the splitting type of \(f_* \mathcal{L}\)
provides the appropriate generalization of the pair \((r, d)\) in classical Brill--Noether theory 
\cite{CPJ1, CPJ2, LLV, HannahRefined}.

In the context of Hurwitz--Brill--Noether theory, the condition \(r \geq 3\) is no longer sufficient
to guarantee that a general such linear series is an embedding.
We show that the additional condition needed to guarantee that a general linear series \(|\L|\) is an embedding is that the splitting type of \(f_* \mathcal{L}\) has at least three nonnegative parts.  This new extra condition reflects the unique geometry of \(k\)-gonal curves, which lie on scrolls in \(\pp^r\).
\end{abstract}

\section{Introduction}

Brill--Noether theory provides the bridge between the classical perspective on curves as subsets of projective space and the modern theory of abstract curves.
Given an algebraic curve $C$, the line bundles that could give rise to a degree \(d\) explicit realization of \(C\) in \(\pp^r\) are parameterized by its \defi{Brill-Noether locus}
\[
W^r_d (C) \colonequals \{ \L \in \Pic^d (C) \mbox{ } \vert \mbox{ } h^0 (\L) \geq r+1 \} .
\]
For \(C\) a general curve, the geometry of \(W^r_d(C)\) is well-understood by the main theorems of classical Brill--Noether theory, established in a series of papers from the 1980s \cite{ im, fl, gp, bn,  kempf, kl}.  In particular, \(W^r_d(C)\) is nonempty if and only if \(\rho(g,r,d) \colonequals g-(r+1)(g-d+r)\) satisfies \(\rho \geq 0\).
In this case, the universal $\mathcal{W}^r_d$ has a unique irreducible component $\mathcal{W}^r_{d, \text{BN}}$ dominating the moduli space $\M_g$.

Equipped with a good understanding of the moduli space $\mathcal{W}^r_{d, \text{BN}}$, one can return to the original motivation and ask finer questions about the map to projective space corresponding to a general line bundle in $\mathcal{W}^r_{d, \text{BN}}$.  The first natural such question is whether a general such line bundle
defines an \emph{embedding} into projective space, that is, when is it very ample?
In their \emph{Embedding Theorem} \cite[Theorem 1]{EH83}, Eisenbud--Harris show that a general line bundle in $\mathcal{W}^r_{d, \text{BN}}$ is very ample if $r \geq 3$.

\begin{rem}
In addition to being sufficient, the condition $r \geq 3$ is almost necessary. More precisely, there are only four triples $(g, r, d)$ with $r < 3$ where the general $\L \in \mathcal{W}^r_{d, \text{BN}}$ is very ample, namely \((g, r, d) \in \{ (0, 1,1), (0, 2, 2), (1, 2, 3), (3, 2, 4) \} \).
\end{rem}

In this article, we consider the analogous problem when the curve $C$ is equipped with a fixed degree \(k\) map $f \colon C \to \pp^1$.  In this setting, the analogues of Brill--Noether loci are the Brill--Noether splitting loci, whose definition we now recall.  Given a vector $\vec{e} = (e_1 , \ldots , e_k)$ with \(e_1 \leq e_2 \leq \cdots \leq e_k\), we define the vector bundle $\O (\vec{e}) \colonequals \bigoplus_{i=1}^k \O_{\pp^1} (e_i)$.  We then define the \defi{Brill--Noether splitting locus}
\[
W^{\vec{e}} (C,f) \colonequals \left\{ \L \in \Pic (C) \mbox{ } \vert \mbox{ } f_* \L \simeq \O (\vec{e}) \text{ or a specialization thereof}\right\} .
\]
 The expected dimension of $W^{\vec{e}} (C,f)$ is given by
\[
\rho' (g, \vec{e}) \colonequals g-  \sum_{i,j} \max \{ 0 , e_j - e_i - 1 \} .
\]
When \((C, f)\) is general, \(W^{\vec{e}}(C, f)\) is nonempty if and only if $\rho'(g, \vec{e}) \geq 0$ \cite{CPJ1, CPJ2, HannahRefined}.
In this case, when the characteristic of the ground field is \(0\) or greater than \(k\), the universal $\mathcal{W}^{\vec{e}}$ has a unique irreducible component $\we$ dominating the Hurwitz space $\H_{g,k}$ \cite{LLV}.  Our main results determine 
when a general line bundle in this component is basepoint free or very ample. (See Remark \ref{char} for the situation when the characteristic is positive but less than or equal to $k$.)

\begin{ithm}\label{Thm:bpf}
A general line bundle $(f, \L) \in \we$ is basepoint free if and only if either \(e_{k-1} \geq 0\), or $\L \simeq f^* \O_{\pp^1}(n)$ for $n \geq 0$.
\end{ithm}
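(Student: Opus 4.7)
The plan is to prove each direction of the biconditional separately, with the forward direction a direct consequence of the structure of $f_*\L$ and the reverse direction a dimension count on the universal Brill--Noether splitting locus.

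For the forward direction, suppose $\L$ is basepoint free and $e_{k-1} < 0$; I will show $\L \cong f^*\O(n)$ for some $n \geq 0$. If $e_k < 0$ then $h^0(\L) = 0$ and $\L$ is not basepoint free, so $e_k \geq 0$ and $h^0(\L) = e_k + 1$. Since $\O(e_k)$ is the only summand of $f_*\L \cong \O(\vec{e})$ with global sections, the inclusion $\O(e_k) \hookrightarrow f_*\L$ induces an isomorphism $H^0(\O(e_k)) \xrightarrow{\sim} H^0(\L)$. By adjunction this inclusion corresponds to a nonzero (hence injective, as $C$ is integral) map of line bundles $\phi\colon f^*\O(e_k) \hookrightarrow \L$, with effective cokernel divisor $D$; every global section of $\L$ is of the form $\phi(f^*s)$ for some $s \in H^0(\O(e_k))$, and so vanishes identically along $D$. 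As $\O_{\pp^1}(e_k)$ is itself basepoint free, the basepoint locus of $\L$ is exactly $D$, so basepoint freeness forces $D = 0$ and $\L \cong f^*\O(n)$ with $n = e_k \geq 0$.

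For the reverse direction, the case $\L \cong f^*\O(n)$ with $n \geq 0$ is immediate by pulling back basepoint-free sections of $\O_{\pp^1}(n)$. For the main case $e_{k-1} \geq 0$, I would bound the dimension of the locus $B \subset W^{\vec{e}}(C, f)$ of line bundles with a basepoint. If $p$ is a basepoint of $\L$, then $\L(-p)$ has splitting type $\vec{e}'$ with $h^0(\vec{e}') = h^0(\vec{e})$, obtained from $\vec{e}$ by decreasing some coordinate $e_{j_0}$ by one; $h^0$-preservation forces $e_{j_0} \leq -1$, and the hypothesis $e_{k-1} \geq 0$ then guarantees $j_0 \leq k-2$. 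Consider the map $\mu_{\vec{e}'} \colon W^{\vec{e}'}(C, f) \times C \to \Pic(C)$, $(\L', p) \mapsto \L'(p)$. Analyzing the extension $0 \to f_*\L' \to f_*\L'(p) \to \kappa_{f(p)} \to 0$ shows that for fixed $\L'$ the splitting type of $\L'(p)$ is generically obtained from $\vec{e}'$ by incrementing its top coordinate; since $j_0 \neq k$, this generic twist differs from $\vec{e}$. Thus $\mu_{\vec{e}'}^{-1}(W^{\vec{e}})$ has fibers of dimension zero over $W^{\vec{e}'}$, and its image in $W^{\vec{e}}$ has dimension at most $\rho'(g, \vec{e}')$.

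A direct computation with $\rho'(g, \vec{e}) = g - \sum_{i,j} \max(0, e_j - e_i - 1)$ gives
\[
\rho'(g, \vec{e}) - \rho'(g, \vec{e}') = \#\{j \neq j_0 \colon e_j \geq e_{j_0} + 1\} - \#\{i \neq j_0 \colon e_i \leq e_{j_0} - 2\},
\]
and the first count is at least two thanks to $e_{k-1}, e_k \geq 0 > e_{j_0} + 1$. The hard part is that this difference need not always be strictly positive: for splitting types $\vec{e}$ with many very negative parts (e.g.\ $\vec{e} = (-10, -8, -3, 0, 0)$ with $j_0 = 3$), the second count can match the first, and the naive dimension bound degenerates. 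In those boundary cases I would carry out a finer count---either an intersection-theoretic Chern-class computation on the universal splitting stratum enumerating, for each fixed $\L'$, the special points $p \in C$ whose twist $\L'(p)$ lands in $W^{\vec{e}}$, or a limit-linear-series degeneration of $(C, f)$ to a $k$-gonal chain of elliptic curves along the lines of Eisenbud--Harris---in order to show that the image of $\mu_{\vec{e}'}$ remains a proper subvariety of $W^{\vec{e}}$, and hence that a general $\L$ is basepoint free.
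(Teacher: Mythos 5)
Your forward direction (if $\L$ is basepoint free and $e_{k-1}<0$, then $\L\cong f^*\O_{\pp^1}(e_k)$) is correct and is an attractive, elementary argument: the adjunction $\O_{\pp^1}(e_k)\hookrightarrow f_*\L$ produces a bundle injection $\phi\colon f^*\O_{\pp^1}(e_k)\hookrightarrow\L$ with cokernel divisor $D$, all global sections of $\L$ factor through $\phi$ (since only the top summand has sections), so $D$ is the base locus and must vanish. The paper instead deduces this direction indirectly from its characterization of relative $0$-very ampleness in Theorem~\ref{rel-va}, and only identifies the exceptional bundle at the end via an $h^0,h^1$ computation; your version is cleaner and more transparent.

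The reverse direction, however, has a real gap, and you flag part of it yourself. Two concrete problems. First, the claim that for fixed $\L'\in W^{\vec{e}'}(C,f)$ and generic $p$, the splitting type of $f_*\L'(p)$ is $\vec{e}'$ with the \emph{top} coordinate incremented, is not right: an elementary upper modification of $\O(\vec{e}')$ at a generic direction produces the most balanced result, which increments the \emph{smallest} part. In particular, when $j_0=1$ (the decremented index is the bottom one), the generic twist $\L'(p)$ already has splitting type $\vec{e}$, so the fiber of $\mu_{\vec{e}'}^{-1}(W^{\vec{e}})\to W^{\vec{e}'}$ is one-dimensional, not zero-dimensional, and your dimension bound on the image fails. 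Moreover, even where the fiber is zero-dimensional the modification induced by the point $p$ is a ``coordinate'' direction in $(f_*\L')_t$, so justifying that it behaves like a generic direction requires an argument you haven't given. Second, as you acknowledge, the expected codimension $\rho'(g,\vec{e})-\rho'(g,\vec{e}')$ can be non-positive in the presence of very negative parts, so even granting the fiber claim the naive count does not close; the "finer count" you allude to is precisely where the content of the theorem lies, and you have not carried it out.

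For comparison, the paper does not attempt a Brill--Noether-style dimension count in $\Pic$. It reduces basepoint freeness to relative $0$-very ampleness (these coincide when $p=0$), and then invokes the full characterization of relative $p$-very ampleness in Theorem~\ref{rel-va}, which is established by a degeneration of $(C,f)$ to a reducible curve $X=C_{k-1}\cup L_1\cup\cdots\cup L_m\cup S_k$ inside $\pp E^\vee$ (Sections~\ref{sec:our_degeneration}--\ref{Sec:Subscroll}) together with an intersection-theoretic count of dependent fibral divisors to detect the boundary cases where extra ampleness occurs. That is a genuinely different and substantially heavier approach, but it handles exactly the delicate cases your dimension count cannot resolve. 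Your forward direction is a legitimate improvement in exposition; your reverse direction would need either the degeneration machinery or a much more careful analysis of the twist-by-a-point map (with the correct description of which summand gets incremented) to be completed.
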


\begin{ithm}
\label{Thm:VA-intro}
A general line bundle in $\we$ is  very ample if $e_{k-2} \geq 0$ and $r = h^0(\O(\vec{e})) - 1 \geq 3$.
\end{ithm}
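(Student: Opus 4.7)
The plan is to follow the approach of the classical Eisenbud--Harris embedding theorem, bounding the dimension of an incidence variety of bad triples $(f, \L, D)$. By Theorem~\ref{Thm:bpf}, the hypothesis $e_{k-2} \geq 0$ (which forces $e_{k-1} \geq 0$) ensures that a general $\L \in \we$ is basepoint free, so $\L$ defines a morphism $\varphi_\L\colon C\to\pp^r$. Very ampleness of $\varphi_\L$ is equivalent to requiring $h^0(\L(-D)) = h^0(\L) - 2$ for every effective divisor $D \in \mathrm{Sym}^2 C$ (which covers separation of both pairs of points and tangent vectors). Form the incidence variety
\[
I \;\colonequals\; \{(f,\L,D) \in \we \times_{\H_{g,k}} \mathrm{Sym}^2_{\H_{g,k}}\mathcal{C} \;:\; h^0(\L(-D)) \geq h^0(\L)-1\}.
\]
Since $\we$ is irreducible by \cite{LLV} and very ampleness is open, it suffices to show $\dim I < \dim \we$.

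Applying $f_*$ to the exact sequence $0\to\L(-D)\to\L\to\L|_D\to 0$ and using $\chi(f_*(\L(-D))) = \chi(f_*\L) - 2$, the badness condition becomes: the splitting type $\vec{e}'$ of $f_*(\L(-D))$ satisfies $|\vec{e}'| = |\vec{e}| - 2$ and $h^0(\O(\vec{e}')) \geq r$. These constraints leave only finitely many possible $\vec{e}'$, so $I$ decomposes into finitely many strata $I_{\vec{e}'}$. The twist map $\tau\colon (f,\L,D)\mapsto (f,\L(-D))$ sends $I_{\vec{e}'}$ into $\mathcal{W}^{\vec{e}'}$ with fibers of dimension at most $2$: given $\L(-D) = \M$, the divisor $D$ ranges in $\mathrm{Sym}^2 C$ and determines $\L = \M(D)$. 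Combined with the dimension bound $\dim \mathcal{W}^{\vec{e}'} \leq \rho'(g,\vec{e}') + \dim \H_{g,k}$ from \cite{CPJ1, CPJ2, LLV, HannahRefined}, this gives
\[
\dim I_{\vec{e}'} \;\leq\; \rho'(g,\vec{e}') + \dim\H_{g,k} + 2,
\]
and the required inequality $\dim I < \dim \we = \rho'(g,\vec{e}) + \dim \H_{g,k}$ reduces to the purely combinatorial statement
\[
\rho'(g,\vec{e}) - \rho'(g,\vec{e}') \;\geq\; 3 \qquad \text{for every bad } \vec{e}'.
\]

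The main obstacle is establishing this inequality uniformly across all bad $\vec{e}'$. A bad $\vec{e}'$ arises from the generic specialization $\vec{e}_0$ (obtained by decreasing the two largest entries of $\vec{e}$ by $1$) through ``weight transfers'' that shift one unit from an entry $\leq -1$ to an entry $\geq -1$, each raising $h^0$ by one. Using $\rho'(g,\vec{e}) = g - \sum_{i,j}\max(0, e_j - e_i - 1)$, one shows that every such transfer increases the gap sum by at least $3$. The hypothesis $e_{k-2} \geq 0$ is exactly what forces the existence of enough nonnegative entries to guarantee this minimum of $3$ additional gap contributions, while $r \geq 3$ supplies the necessary positive bulk in $h^0(\O(\vec{e}))$ to rule out degenerate low-rank exceptions. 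The tightest instances---e.g.\ $\vec{e} = (0,\ldots,0)$ with bad $\vec{e}' = (-2, 0, \ldots, 0)$, where the gap sum jumps from $0$ to exactly $3$---are those in which both hypotheses are sharply used, and a careful case analysis of these borderline configurations forms the technical heart of the argument.
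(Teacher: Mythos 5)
Your proposal takes a genuinely different route from the paper: you attempt a global incidence-variety dimension count over the Hurwitz space, whereas the paper constructs an explicit reducible curve \(X = C_{k-1}\cup S_k\cup L_1\cup\cdots\cup L_m\) on the scroll \(\pp E^\vee\) (Section \ref{sec:our_degeneration}), shows it smooths (Section \ref{Sec:Smooth}), establishes relative very ampleness via Theorem \ref{rel-va-intro}, and then in Section \ref{mc} analyzes very ampleness of the map to \(\pp^r\) along a general deformation of \(X\) by a careful normal-bundle argument in two cases. The two approaches are philosophically similar (both are descendants of Eisenbud--Harris), but your strategy is a pure parameter count while the paper's is a hands-on degeneration.

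The main gap in your argument is the combinatorial claim that \(\rho'(g,\vec{e}) - \rho'(g,\vec{e}') \geq 3\) for every bad \(\vec{e}'\). This is false as stated. Take \(\vec{e} = (-N,0,0,1)\) with \(N\geq 2\), so \(e_{k-2}=0\), \(r=3\), and \(u(\vec{e})=3N-2\). The splitting type \(\vec{e}'=(-N-1,0,0,0)\) satisfies \(\deg\vec{e}'=\deg\vec{e}-2\), occurs as a codegree-\(2\) subsheaf of \(\O(\vec{e})\), and has \(h^0(\O(\vec{e}'))=3=r\), so it is bad; yet \(u(\vec{e}')=3N\), giving a gap of only \(2\). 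Worse, \(\vec{e}'=(-N,-2,0,1)\) has \(h^0=3\) and \(u(\vec{e}')=3N-1\), a gap of only \(1\). With your stated fiber bound of \(2\), the inequality \(\dim I_{\vec{e}'}\leq \rho'(g,\vec{e}')+\dim\H_{g,k}+2 < \dim\we\) then fails for, e.g., \(g=3N-1\) and \(\vec{e}'=(-N,-2,0,1)\), where \(\rho'(g,\vec{e}')=0\) and \(\rho'(g,\vec{e})=1\). The only way to rescue the count is to show that the fiber of the twist map \(\tau\) over a general \(\M\in\mathcal{W}^{\vec{e}'}\) is much smaller than \(\Sym^2 C\): one must quantify, for each bad \(\vec{e}'\), the codimension in \(\Sym^2 C\) of the locus of \(D\) for which the positive modification \(f_*\M(D)\) has the \emph{specific} (non-generic) splitting type \(\vec{e}\). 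That refinement is nontrivial and constitutes most of the real work; as written, the proposal acknowledges the issue in its final paragraph but does not carry it out, and the inequality it asserts is the wrong one.

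A secondary concern: your reduction requires knowing \(\dim\mathcal{W}^{\vec{e}'}\leq \rho'(g,\vec{e}')+\dim\H_{g,k}\) for the \emph{universal} splitting locus. The dimension theorems you cite control \(W^{\vec{e}'}(C,f)\) for a \emph{general} cover; one should therefore fix a general \((C,f)\) and run the incidence correspondence fiberwise over it rather than globally, which is a small but necessary repair. Finally, be aware that the expected-dimension-zero cases \(\rho'(g,\vec{e})=0\) fall within the theorem's hypotheses and require the incidence locus to be genuinely empty, not merely of smaller dimension; the paper handles these uniformly because the degeneration argument separates points and tangent vectors directly without needing a positive-dimensional family to move in.
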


\begin{rem}
As in the original Embedding Theorem, the sufficient condition in Theorem \ref{Thm:VA-intro} is almost necessary. We make this precise by giving the necessary and sufficient conditions for very ampleness in Theorem \ref{Thm:VA}.
\end{rem}

Taking $k$ sufficiently large, the condition $r \geq 3$ in Theorem \ref{Thm:VA-intro} implies the original Embedding Theorem of Eisenbud--Harris.
In the regime of small $k$, another condition is needed to capture the unique geometry of \(k\)-gonal curves:
The condition involving the number of nonnegative parts of \(\vec{e}\) reflects the fact that such maps necessarily factor through projective bundles
over \(\pp^1\).

Correspondingly, our approach to this problem splits into two parts:
One involving the ``ampleness of the map from the curve to the projective bundle'',
and a second involving ``ampleness of the map from the projective bundle to the projective space along the curve''.

More generally, we can completely understand the ``degree of ampleness'' of the first of these maps.
Recall that a line bundle $\L$ on a curve $C$ is called \defi{$p$-very ample} if, for every effective divisor $D$ on $C$ of degree $p+1$, we have $h^0 (\L(-D)) = h^0 (\L) - (p+1)$.  Note that a line bundle is 0-very ample if and only if it is basepoint free, and it is 1-very ample if and only if it is very ample.  In this way, the notion of $p$-very ampleness is a natural generalization
of both basepoint freeness and very ampleness.

In the setting of curves with a fixed map to $\pp^1$, it is simpler to study a variant of $p$-very ampleness that is relative to the map to $\pp^1$.  Given a cover $f \colon C \to \pp^1$, we say that a divisor $D$ on $C$ is \defi{fibral} if \(D\) is effective and supported in a fiber of \(f\). We make the following definition.

\begin{defin} \label{def:rel-va}
Let $f\colon C \to \pp^1$ be a cover.  We say that a line bundle $\L$ on $C$ is \defi{relatively $p$-very ample} if, for every fibral divisor $D$ of degree $p+1$ on $C$, we have
\[
h^0 (\L(-D)) = h^0 (\L) - (p+1) .
\]
Similarly, we say that $\L$ is \defi{birationally relatively $p$-very ample} if the above holds for all but finitely many fibral divisors $D$ of degree $p+1$ on $C$.
\end{defin}
\noindent
This definition is only useful when $p \leq k - 1$, since for $p \geq k$, there are no degree $p+1$ fibral divisors.

\begin{ithm} \label{bi-rel-va-intro} Assume $p \leq k - 1$.
A general line bundle in \(\we\) is birationally relatively \(p\)-very ample if and only if $e_{k-p} \geq 0$. 
\end{ithm}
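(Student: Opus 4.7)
My plan is to prove both directions by induction on $p$.

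For necessity, suppose $e_{k-p} < 0$ and set $s = |\{i : e_i \geq 0\}| \leq p$. For any fibral $D$ of degree $p+1$, the evaluation map $H^0(\L) \to H^0(\L|_D)$ factors through the fiber $(f_*\L)|_t$ at $t = f(D)$, since $f_*(\L|_D)$ is annihilated by the maximal ideal at $t$ (because $D$ is supported in $f^{-1}(t)$). Using the splitting $f_*\L \cong \bigoplus_i \O_{\pp^1}(e_i)$, the image of $H^0(f_*\L) \to (f_*\L)|_t$ is precisely the $s$-dimensional coordinate subspace spanned by the nonnegative summands (as $H^0(\O_{\pp^1}(e_i)) \to \O_{\pp^1}(e_i)|_t$ is surjective for $e_i \geq 0$ and zero otherwise). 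Hence the composite evaluation at $D$ has rank at most $s < p+1$, giving $h^0(\L(-D)) \geq h^0(\L) - s > h^0(\L) - (p+1)$ for every such $D$, so $\L$ is not birationally $p$-very ample.

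For sufficiency, I induct on $p$, with base case $p = 0$: $e_k \geq 0$ is equivalent to $h^0(\L) \geq 1$, in which case the basepoint locus of $\L$ is a proper closed (hence finite) subset of $C$. For the inductive step, note $e_{k-(p-1)} \geq e_{k-p} \geq 0$, so by induction a general $\L \in \we$ is birationally $(p-1)$-very ample. By upper-semicontinuity of $h^0(\L(-D))$ on the $1$-dimensional parameter space of fibral divisors of degree $p+1$, it suffices to exhibit a single such $D$ with $h^0(\L(-D)) = h^0(\L) - (p+1)$.

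I would construct $D = D' + q$ where $D'$ is a general reduced fibral $p$-divisor: by induction $h^0(\L(-D')) = h^0(\L) - p$, and since $e_{k-p} \geq 0$ forces $h^0(\L) \geq p+1$ (from the $p+1$ nonnegative parts $e_{k-p}, \ldots, e_k$), we have $h^0(\L(-D')) \geq 1$. The crux is to find $q \in f^{-1}(f(D')) \setminus \Supp(D')$ that is not a basepoint of $\L(-D')$. If every such $q$ were a basepoint, every section of $\L(-D')$ would vanish on the whole fiber $f^{-1}(f(D'))$, forcing $h^0(\L(-D')) = h^0(\L \otimes f^*\O_{\pp^1}(-1)) = h^0(\L) - s$; combined with the inductive equality, this would force $s = p$, contradicting $s \geq p+1$.

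The main obstacle is the sufficiency step. The crucial numerical identity $h^0(\L) - h^0(\L \otimes f^*\O_{\pp^1}(-1)) = s$, obtained from the projection formula $f_*(\L \otimes f^* \O_{\pp^1}(-1)) = f_*\L \otimes \O_{\pp^1}(-1)$, translates the hypothesis $s \geq p+1$ into the existence of a non-basepoint. A technical subtlety concerns whether this inductive construction produces a good $D$ in each irreducible component of the (possibly reducible) parameter space of fibral $(p+1)$-divisors; for components parameterizing non-reduced divisors, parallel inductive arguments with more careful bookkeeping of splitting types of $f_*(\L(-D'))$ are needed.
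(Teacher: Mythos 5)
Your necessity argument is correct and amounts to the same observation the paper makes: evaluation at a fibral $D$ factors through $(f_*\L)|_t$, and the image of $H^0$ there is the $s$-dimensional subspace coming from the nonnegative summands. The paper phrases this geometrically (the span of a reduced fibral divisor lies in a fiber of $\pp(E^\vee/F_-) \to \pp^1$), but the content is identical.

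For sufficiency you take a genuinely different route: induction on $p$ on a general $(C,f,\L)$ directly, rather than the paper's induction on $k$ via the degenerate curve $X = C_{k-1} \cup S_k \cup L_1 \cup \cdots \cup L_m$ of Section~\ref{sec:our_degeneration}. Your construction of a single good $D = D'+q$ is clean and correct: the projection-formula identity $h^0(\L) - h^0(\L \otimes f^*\O_{\pp^1}(-1)) = s$ shows that not every point of the residual fiber can be a basepoint of $\L(-D')$, since that would force $s = p$. The base case $p=0$ is fine. If this were the whole story, your argument would be a nice elementary alternative to the paper's degeneration.

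However, the gap you flag at the end is the real issue, and it is not a minor technicality: the parameter space of fibral divisors of degree $p+1$ decomposes into irreducible components indexed by partitions of $p+1$ (the multiplicity type), and birational relative $p$-very ampleness demands that the condition hold generically on \emph{every} component. Your $D = D'+q$ with $D'$ reduced and $q$ disjoint from $\Supp D'$ lands only in the component of reduced divisors. A non-reduced fibral divisor such as $2q$, for $q$ a non-ramification point, is \emph{not} a limit of reduced fibral divisors, so upper semicontinuity from your good $D$ tells you nothing about the component $\{2q : q \in C\}$, nor about the others with repeated points. Until those are treated, the proof does not establish the stated theorem. The paper's corresponding move is to observe that every component of the $(p+1)$-fold fiber product $C\times_{\pp^1}\cdots\times_{\pp^1}C$ dominates $C$ under the first projection, so every component of the divisor space contains some $D = D' + x$ with $x$ a fixed general point of $C$; it then verifies $h^0(\L(-D'-x)) = h^0(\L)-(p+1)$ by specializing $x$ to a general $x_0$ on the section $S_k$ of $X$ and invoking the inductive hypothesis on $C_{k-1}$ for the residual degree-$p$ divisor. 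So the paper's reduction explicitly reaches all components at once, whereas yours does not; the ``more careful bookkeeping'' you allude to would, if carried out, essentially need to reproduce a componentwise argument of this kind.
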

\begin{ithm} \label{rel-va-intro} Assume $p \leq k - 1$.
A general line bundle in  \(\we\) is relatively \(p\)-very ample if $e_{k-p-1} \geq 0$.
\end{ithm}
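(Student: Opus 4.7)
The plan is to deduce this theorem from the birational version, Theorem~\ref{bi-rel-va-intro}, by a dimension count on an incidence variety. Since $e_{k-p-1}\geq 0$ implies $e_{k-p}\geq 0$, Theorem~\ref{bi-rel-va-intro} already gives that for a general $\L\in\we$ the set of fibral divisors $D$ of degree $p+1$ with $h^0(\L(-D))>h^0(\L)-(p+1)$ is finite; the task is to show this finite \emph{bad} set is actually empty for a general $\L$.

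First, I reformulate the condition in splitting-type language. For a fibral divisor $D$ of degree $p+1$ supported in the fiber over $t\in\pp^1$, pushing forward the inclusion $\L(-D)\hookrightarrow \L$ yields the short exact sequence
\[
0 \to f_*\L(-D) \to f_*\L \to f_*(\L|_D) \to 0,
\]
and computing Euler characteristics on $\pp^1$ shows that the condition $h^0(\L(-D))=h^0(\L)-(p+1)$ is equivalent to surjectivity of the restriction
\[
\pi_D\colon V_t \twoheadrightarrow H^0(D,\L|_D), \qquad V_t \colonequals \Im\bigl(H^0(f_*\L)\to (f_*\L)_t\bigr).
\]
Here $\dim V_t$ equals the number $N$ of nonnegative parts of $\vec{e}$, while $\dim H^0(D,\L|_D)=p+1$; under the hypothesis $e_{k-p-1}\geq 0$, one has $N\geq p+2$, i.e., one dimension of slack.

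I then organize these data into the incidence variety
\[
\mathcal{I} = \bigl\{(\L,D)\ \big\vert\ \L\in\we,\ D\text{ a degree-}(p+1)\text{ fibral divisor on }(C,f)\bigr\},
\]
which has dimension $\dim\we+1$, and let $\mathcal{I}_{\text{bad}}\subset \mathcal{I}$ be the closed subscheme where $\pi_D$ fails to surject. Pointwise inside the $k$-dimensional fiber $(f_*\L)_t$, the failure condition is $\dim(V_t\cap\ker\pi_D)\geq N-p$, one more than the expected $N-p-1$; this is a Schubert condition of codimension $N-p$ on the relevant product of Grassmannians. Once one verifies that as $(\L,D)$ varies in $\mathcal{I}$ the pair $(V_t,\ker\pi_D)$ is sufficiently general to realize this codimension, we conclude $\dim\mathcal{I}_{\text{bad}}\leq \dim\we+1-(N-p)<\dim\we$, so the image of $\mathcal{I}_{\text{bad}}$ in $\we$ is a proper closed subset and a general $\L\in\we$ has no bad $D$.

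The main obstacle is precisely this genericity claim for the pair $(V_t,\ker\pi_D)$, especially above ramified points of $f$, where the scheme structure of $D=mp$ entangles the local length-$(p+1)$ elementary modification of $\O(\vec{e})$. I would address it by combining a direct local analysis of modifications of $\O(\vec{e})$ at both ramified and unramified $t$ with a specialization of $(C,f)$ to a chain of $\pp^1$'s as in \cite{LLV, HannahRefined}, where the required transversality becomes a combinatorial check on each irreducible component and can then be propagated to a general member of $\we$ using irreducibility of $\we$ and upper-semicontinuity of splitting types.
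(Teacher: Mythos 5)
Your approach is genuinely different from the paper's: you set up a global incidence variety $\mathcal{I}$ and aim to conclude via a dimension count, whereas the paper works with the explicit degenerate curve $X = C_{k-1}\cup S_k\cup\bigcup L_i$ constructed in Section~\ref{sec:our_degeneration}, first showing (Lemma~\ref{Lem:FibralSupport}) that a bad limiting divisor must be supported on $C_{k-1}$ with span of dimension $p-1$, then separating the finitely many such $D$ by constructing an explicit first-order deformation: the surjection $H^0(N_X)\to H^0(N_{\Sigma_{k-1}}|_{C_{k-1}}(\Gamma))$ of Lemma~\ref{Cor:Surj} combined with Lemma~\ref{Lem:Eval} exhibits a section of $N_X$ whose image in $N_{\Sigma_{k-1}}|_D$ escapes the ``bad'' subspace $\Delta$. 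Your framing is conceptually appealing, and your codimension count for the determinantal/Schubert condition is correct: the failure locus does have expected codimension $N - p \geq 2$ when $e_{k-p-1}\geq 0$.

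The gap is that the heart of the argument is exactly the transversality claim you defer, and without it the dimension count gives nothing: the inequality $\dim\mathcal{I}_{\mathrm{bad}} \leq \dim\mathcal{I} - (N-p)$ holds only if the determinantal locus attains its expected codimension, and a priori it could have a component of too-small codimension that dominates $\we$. You acknowledge this as ``the main obstacle'' and gesture at a degeneration to a chain of $\pp^1$'s, but that verification is essentially the entire proof and is not carried out; in particular it is not obvious that the combinatorics of splitting types on a chain curve would produce the needed genericity of the pair $(V_t, W_D)$, and it would have to be done at both branch points and at non-reduced $D$. A secondary issue, which you also flag but do not resolve: your reformulation $\pi_D\colon V_t\to H^0(D,\L|_D)$ factors through the fiber $(f_*\L)_t$ only when $D$ is a subscheme of the scheme-theoretic fiber $f^{-1}(t)$; since the paper's ``fibral'' means only that $\mathrm{Supp}(D)\subset f^{-1}(t)$, divisors such as $2q + \cdots$ at an unramified $q$ are permitted, and for those the torsion quotient $f_*(\L/\L(-D))$ is not a quotient of the fiber. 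So the reformulation itself needs a correction before the dimension count can even be set up uniformly.
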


\begin{rem}
Again, the condition in Theorem \ref{rel-va-intro} is almost necessary. We make this precise in Theorem \ref{rel-va}, where we give the necessary and sufficient conditions for relative $p$-very ampleness.
\end{rem}

Our proofs proceed by embedded degeneration.
We begin in Section~\ref{Sec:Prelim} by recalling the correspondence between line bundles 
\(\mathcal{L} \in W^{\vec{e}}(C, f)\) and maps \(C \to \pp\O(\vec{e})^\vee\);
this extends the correspondence in classical Brill--Noether theory between
line bundles \(\L \in W^r_d(C)\) and maps \(C \to \pp H^0(\L)^\vee\). 
Our theorems can therefore be approached by constructing suitable curves \(C \subset \pp\O(\vec{e})^\vee\).
This we do inductively:
In Section~\ref{sec:our_degeneration},
we explain how to start with a suitable curve 
\[C_{k - 1} \subset \pp\O(-e_1, -e_2, \ldots, -e_{k-1}) \subset \pp\O(\vec{e})^\vee,\]
and attach lines and a section to form a certain reducible curve \(X \subset \pp\O(\vec{e})^\vee\).
Even though the curve \(X\) does not satisfy the necessary properties,
we are able to analyze the deformations of \(X\) in Sections~\ref{Sec:Smooth}--\ref{Sec:VA}
to show that a suitably general deformation does.

Simply by considering the curves we construct in this manner,
we obtain a 6-page ``constructive'' proof of
Hurwitz--Brill--Noether existence (see Remark~\ref{existence}).
Even just this result is highly nontrivial: The original proof by Jensen--Ranganathan
of a special case \cite{JR} was quite involved,
and subsequent complete proofs have relied on deep results about either tropical geometry \cite{CPJ1},
intersection theory on the moduli stack of vector bundles \cite{P1},
or analogs of the regeneration theorem \cite{LLV}.
Moreover, the constructive nature of this proof opens up the door to probing
many other aspects of the geometry of $k$-gonal curves; its
usefulness is unlikely to be limited to only the study of ampleness given here.

\begin{rem}[Characteristic hypotheses] \label{char}
When the characteristic of the ground field is zero or greater than \(k\), the Hurwitz space $\H_{k,g}$ is irreducible.
By contrast, when the characteristic of the ground field is positive but less than or equal to $k$, the 
Hurwitz space $\H_{k,g}$ is not known to be irreducible.
Nevertheless, our proofs show that there exists a component of $\W^{\vec{e}}$ dominating \emph{some} component of the Hurwitz space, in which the statements of the theorems hold.
When we say that $f\colon C \to \pp^1$ is a general cover, we shall mean that it is general in the appropriate component of the Hurwitz space.

When the characteristic of the ground field is positive but less than or equal to $k$, the main theorems of \cite{LLV} also apply to some component of the Hurwitz space (see \cite[Remark 1]{LLV}). 
A priori, this component need not be the same as any of our components in the present paper. Moreover, the components here need not be the same for different $\vec{e}$.
\end{rem}

\subsection*{Acknowledgements}
During the preparation of this article,
D.J.\ was supported by NSF grant DMS-2054135,
E.L.\ was supported by
NSF grants DMS-1802908 and DMS-2200641, H.L.\ was supported by the Clay Research Fellowship, and I.V.\ was supported by NSF grants DMS-1902743 and DMS-2200655.

\section{Preliminaries on projective bundles and splitting types}
\label{Sec:Prelim}

 Throughout, we  denote our splitting type by $\vec{e} = (e_1, \ldots, e_k)$ with $e_1 \leq \cdots \leq e_k$.  
We write \(\deg(\vec{e}) \colonequals e_1 + \cdots + e_k\).  
 We define
 \[u(\vec{e})\colonequals h^1(\pp^1, \End(\O(\vec{e}))) = \sum_{i < j} \max\{0, e_j - e_i - 1\}.\]
 Note that $\rho'(g, \vec{e}) = g - u(\vec{e}).$
 Let $f \colon C \to \pp^1$ be a degree $k$, genus $g$ cover, and suppose 
 \[
 f_*\L = E \simeq \O(\vec{e}) .
 \]
Pulling back to $C$, there is a natural surjection
 \[f^*E = f^*f_*\L \to \L,\]
 which corresponds to evaluation of sections on a fiber at a point of $C$.  Dualizing, we obtain an injection
 \[\L^\vee \hookrightarrow f^*E^\vee \]
with locally free cokernel. This defines an embedding 
\begin{equation} \label{CintoPE}
\begin{tikzcd}
C \arrow[hookrightarrow]{r}{\iota} \arrow{rd}[swap]{f} &\pp E^\vee \arrow{d}{\pi} \\
& \pp^1
\end{tikzcd}
\end{equation}
such that $\iota^*\O_{\pp E^\vee}(1) = \L$. Throughout, we use the subspace convention for projective bundles, so that $\pi_* \O_{\pp E^\vee}(1) = E$.

\begin{lem} \label{good-embeddings}
For every $t \in \pp^1$, the fiber $f^{-1}(t)$ is a basis for $\pi^{-1}(t) \simeq \pp^{k-1}$. Conversely, suppose we have an embedding $\iota\colon C \to \pp E^\vee$ factoring $f$, as in \eqref{CintoPE}. If $f^{-1}(t)$ is a basis for $\pi^{-1}(t)$ for all $t \in \pp^1$, then
\(f_* \iota^*\O_{\pp E^\vee}(1) \simeq E\).
\end{lem}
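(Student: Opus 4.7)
The plan is to reduce both directions to a single linear-algebra characterization of ``basis.''  If $Z \subset \pp^{k-1}$ is a length-$k$ subscheme, then $Z$ is a basis for $\pp^{k-1}$ if and only if the restriction map $H^0(\pp^{k-1}, \O(1)) \to H^0(Z, \O(1)|_Z)$ is an isomorphism of $k$-dimensional vector spaces. Both halves of the lemma then amount to interpreting this restriction via cohomology and base change for the finite flat morphism $f$.

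For the forward direction, $f$ is a finite morphism of smooth curves and hence flat by miracle flatness, so $f_*\L$ is locally free of rank $k$ and cohomology and base change gives a canonical isomorphism $(f_*\L)_t \otimes k(t) \xrightarrow{\sim} H^0(f^{-1}(t), \L|_{f^{-1}(t)})$ at every $t \in \pp^1$. Since $\iota$ was defined from the surjection $f^*(f_*\L) \to \L$, its restriction to the scheme-theoretic fiber $f^{-1}(t)$ is the morphism to $\pi^{-1}(t) \simeq \pp E_t^\vee$ whose pullback on $\O(1)$-sections is precisely the evaluation map $E_t = H^0(\pi^{-1}(t), \O(1)) \to H^0(f^{-1}(t), \L|_{f^{-1}(t)})$. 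Under the base-change identification this is an isomorphism, so by the linear-algebra criterion $f^{-1}(t)$ is a basis.

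For the converse, set $\L = \iota^*\O_{\pp E^\vee}(1)$ and pull back the tautological quotient $\pi^*E \twoheadrightarrow \O_{\pp E^\vee}(1)$ along $\iota$ to obtain a surjection $f^*E \to \L$. By adjunction this corresponds to a map of rank-$k$ vector bundles $\varphi\colon E \to f_*\L$ on $\pp^1$, and to show $\varphi$ is an isomorphism it suffices to check fiberwise bijectivity. Cohomology and base change identifies $\varphi_t$ with the restriction map $H^0(\pi^{-1}(t), \O(1)) \to H^0(\iota(f^{-1}(t)), \O(1))$, which is an isomorphism by the hypothesis that $f^{-1}(t)$ is a basis. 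The one subtlety is the scheme-theoretic interpretation at branch points, where $f^{-1}(t)$ is nonreduced, but cohomology and base change handles all fibers uniformly, so no genuine obstacle arises.
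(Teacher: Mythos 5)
Your proof is correct and takes essentially the same approach as the paper's: in both, the basis condition for $f^{-1}(t)$ is rephrased as the evaluation map $H^0(\pi^{-1}(t),\O(1)) \to H^0(f^{-1}(t),\O(1))$ being an isomorphism, which via cohomology and base change for the finite flat map $f$ is exactly the statement that the induced sheaf map $E = \pi_*\O_{\pp E^\vee}(1) \to f_*\iota^*\O_{\pp E^\vee}(1)$ is an isomorphism at $t$. The paper presents this as a single two-step chain of equivalences, whereas you spell out the two directions separately and explicitly name miracle flatness and the scheme-theoretic fiber subtlety, but the content is the same.
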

\begin{proof}
To say $f^{-1}(t)$ is a basis for $\pi^{-1}(t) \simeq \pp^{k-1}$ is to say that the evaluation map 
\[ H^0(\O_{\pp E^\vee}(1)|_{\pi^{-1}(t)}) \to \O_{\pp E^\vee}(1)|_{f^{-1}(t)} \]
is an isomorphism.  This in turn is equivalent to saying that the map
\[
E = \pi_* \O_{\pp E^\vee}(1) \to f_* \iota^*\O_{\pp E^\vee}(1) 
\]
is an isomorphism at $t$.
\end{proof}

The bundle \(E^\vee\) admits a filtration
\begin{equation} \label{filt}
0 \subset \O(-e_1) \subset \O(-e_1, -e_2) \subset \cdots \subset \O(-e_1, -e_2, \ldots, -e_{k-1}) \subset \O(-e_1, -e_2, \ldots , -e_{k-1}, -e_k) = E^\vee.
\end{equation}
If the splitting type has distinct parts, this filtration is unique (it is the Harder--Narasimhan filtration); otherwise, we choose such a filtration.
We define $F_i \colonequals  \O(-e_1, -e_2, \ldots, -e_i)$ to be the $i$th bundle above, so \eqref{filt} becomes
\[
0 \subset F_1 \subset \cdots \subset F_{k-1} \subset F_k = E^\vee .
\]
For ease of notation, write \(\Sigma_i \colonequals \pp F_i\).
The above filtration of \(E^\vee\) gives a sequence of inclusions
\[\Sigma_1 \subset \Sigma_2 \subset \cdots \subset \Sigma_{k - 1} \subset \Sigma_k = \pp E^\vee.\]
We also define
\[F_- \colonequals F_{\#\{i : e_i < 0\}} \simeq \bigoplus_{i : e_i < 0} \O(-e_i)
\quad \text{and} \quad
F_0 \colonequals F_{\#\{i : e_i \leq 0\}} \simeq  \bigoplus_{i : e_i \leq 0} \O(-e_i).
\]

Projection away from $\pp F_- \subset \pp E^\vee$ defines a rational map which we denote $\pr_-\colon \pp E^\vee \dashedrightarrow \pp(E^\vee/F_-)$.
The line bundle $\O_{\pp E^\vee}(1)$ on $\pp E^\vee$ defines a rational map $\pp E^\vee \dashedrightarrow \pp^r$, where
\[
r + 1 =  h^0(\pp^1, E) = h^0(\pp E^\vee, \O_{\pp E^\vee}(1))  = \sum_{i=1}^k \max\{0, e_i + 1\}.
\]
This map factors through $\pr_-$.  In particular,
the complete linear series $|\L| \colon C \to \pp^r$ factors as
\begin{center}
\begin{tikzcd}
C \arrow[hook]{r}{\iota} & \pp E^\vee \arrow[dashed]{r}{\pr_-} & \pp(E^\vee/F_-) \arrow{r} &\pp^r.
\end{tikzcd}
\end{center}
Thus, we break our problem into two parts: First we study the map $(\pr_- \circ \iota) \colon C \to \pp(E^\vee/F_-)$.  Then we study $\pp(E^\vee/F_-) \to \pp^r$ along the image of the first map.
The image of $ \pp(E^\vee/F_-) \rightarrow \pp^r$ is a cone over $\pp(E^\vee/F_0) \hookrightarrow \pp^r$\eledit{,} with vertex a linear space which is the image of  $\pp(F_0/F_-)$.

The definition of (birational) relative very ampleness given in Definition~\ref{def:rel-va}
is the analog of (birational) very ampleness for the first map
\(\pr_- \circ \iota\).
Our first task is, thus, to determine exactly when
\(\L\) is (birationally) relatively very ample.

\section{Overview of inductive strategy}
\label{sec:our_degeneration}

The following theorem is the basis for our inductive strategy to prove the main theorems. 

\begin{thm}\label{thm:main_inductive}
Let \(k \geq 1\) and let \(E = \O(\vec{e})\).  
There exists a smooth curve \(C \subset \pp E^\vee\) of any genus \(g\) such that $\rho'(g, \vec{e}) \geq 0$ (with \(g=u(\vec{e}) = 0\) when \(k=1\)) satisfying all of the following conditions:
\begin{enumerate}
\item\label{main_inductive:pushforward} \((\pi|_C)_*\O(1)|_C \simeq E\),
\item\label{main_inductive:h1vanishes} \(h^1(N_{C/ \pp E^\vee}) = 0\),
\item \label{main_inductive:general} the map \(\pi|_C \colon C \to \pp^1\) is general in the Hurwitz space, and
\item \label{main_inductive:subscroll} the curve \(C\) meets \(\Sigma_{k-1}\) at finitely many points, and does not meet \(\Sigma_{k-2}\).
\end{enumerate}
\end{thm}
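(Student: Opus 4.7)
The plan is to proceed by induction on \(k\). The base case \(k = 1\) is immediate: \(\pp E^\vee = \pp^1\), so \(C = \pp^1\) with \(g = 0\) satisfies all four conditions trivially (with \(\Sigma_{k-1}\) and \(\Sigma_{k-2}\) empty). For the inductive step, I apply the hypothesis to the truncated splitting type \(\vec{e}\,' \colonequals (e_1, \ldots, e_{k-1})\), obtaining a smooth curve \(C_{k-1} \subset \pp F_{k-1} = \Sigma_{k-1}\) of suitably chosen genus \(g_{k-1} \geq u(\vec{e}\,')\), with \(f_{k-1} \colonequals \pi|_{C_{k-1}}\) general in the corresponding Hurwitz space and \(\L_{k-1} \colonequals \O_{\Sigma_{k-1}}(1)|_{C_{k-1}}\) of splitting type \(\vec{e}\,'\).

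Next, I construct a reducible nodal curve \(X \subset \pp E^\vee\). Using the splitting \(E^\vee = F_{k-1} \oplus \O(-e_k)\), let \(S \colonequals \pp \O(-e_k) \subset \pp E^\vee\) be the corresponding section, a copy of \(\pp^1\) disjoint from \(\Sigma_{k-1}\). Choose \(m\) general points \(t_1, \ldots, t_m \in \pp^1\) over which \(f_{k-1}\) is unramified, select one point \(p_i \in C_{k-1} \cap \pi^{-1}(t_i)\) for each \(i\), and let \(L_i \subset \pi^{-1}(t_i) \simeq \pp^{k-1}\) be the line joining \(p_i\) to \(s_i \colonequals S \cap \pi^{-1}(t_i)\). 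Setting
\[
X \colonequals C_{k-1} \cup L_1 \cup \cdots \cup L_m \cup S,
\]
the dual graph of \(X\) has \(m-1\) independent cycles, so \(p_a(X) = g_{k-1} + m - 1\); varying \(g_{k-1}\) and \(m\) in their allowed ranges realizes every target \(g \geq u(\vec{e})\).

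The main technical step, and the principal obstacle I anticipate, is to show \(h^1(X, N_{X/\pp E^\vee}) = 0\); by standard deformation theory this guarantees a smooth deformation \(C \subset \pp E^\vee\) of \(X\), and by semi-continuity condition (\ref{main_inductive:h1vanishes}) holds on \(C\). The normal sheaf of a nodal curve restricts to each component as that component's normal bundle twisted up at the nodes: \(N_{X/\pp E^\vee}|_{X_j} \simeq N_{X_j/\pp E^\vee}(\text{nodes on } X_j)\). On \(C_{k-1}\), the twist-by-nodes of the standard exact sequence
\[
0 \to N_{C_{k-1}/\Sigma_{k-1}} \to N_{C_{k-1}/\pp E^\vee} \to N_{\Sigma_{k-1}/\pp E^\vee}|_{C_{k-1}} \to 0,
\]
together with the formula \(N_{\Sigma_{k-1}/\pp E^\vee} \simeq \O_{\Sigma_{k-1}}(1) \otimes \pi^*\O(e_k)\), reduces the required vanishing to the inductive hypothesis on \(C_{k-1}\) and to the vanishing of \(H^1\) of \(\L_{k-1} \otimes f_{k-1}^*\O(e_k)(\sum p_i)\); the point is that sufficiently many twists by the nodes \(p_i\) compensate for any negativity in the splitting type \((e_1 + e_k, \ldots, e_{k-1} + e_k)\). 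The normal bundles of \(L_i \simeq \pp^1\) and of \(S \simeq \pp^1\) split as direct sums of line bundles on \(\pp^1\) whose degrees, after twisting by the nodes, are nonnegative and so contribute no \(H^1\); a Mayer--Vietoris sequence over the nodes assembles these component-wise vanishings to the desired \(h^1(N_{X/\pp E^\vee}) = 0\).

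Granted the smoothing, the remaining conditions follow. Over a generic \(t \in \pp^1\), the fiber of \(\pi|_X\) is the union of the \(k-1\) points of \(C_{k-1}\) lying over \(t\) -- which form a basis of \(\Sigma_{k-1} \cap \pi^{-1}(t)\) by inductive hypothesis -- with the single point \(s_t = S \cap \pi^{-1}(t) \notin \Sigma_{k-1}\); these \(k\) points are therefore a basis of \(\pi^{-1}(t)\). This generic-basis property is open in families, hence passes to the smoothing \(C\), and Lemma~\ref{good-embeddings} then yields condition (\ref{main_inductive:pushforward}). Condition (\ref{main_inductive:general}) follows from openness of the Hurwitz-general locus together with the generality of the \(t_i\) and the inductive generality of \(f_{k-1}\). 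Finally, condition (\ref{main_inductive:subscroll}) holds because a general deformation \(C\) is not contained in the divisor \(\Sigma_{k-1}\) (so meets it in finitely many points), and generically avoids the codimension-two \(\Sigma_{k-2}\).
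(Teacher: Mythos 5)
Your construction of \(X = C_{k-1}\cup L_1\cup\cdots\cup L_m \cup S\) and the overall inductive strategy are exactly the paper's. The genus bookkeeping is also correct. However, the proposal has several genuine gaps, concentrated in the parts of the argument that carry the paper's real content.

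\textbf{Smoothability.} You infer existence of a smooth deformation from \(H^1(N_{X/\pp E^\vee})=0\) alone, but that only gives unobstructedness of the Hilbert scheme at \([X]\); to smooth a given node \(p\), one must show the evaluation \(H^0(N_X)\to T^1_p\) is \emph{surjective}, which requires the equisingular vanishing \(H^1(N_X^p)=0\), a strictly stronger statement. The paper's Lemma~\ref{lem:smooth_defs_exist} proves exactly this (in fact the two-node version \(H^1(N_X^{p,q})=0\), needed again in Section~\ref{sec:degeneration_general}). Relatedly, your formula \(N_X|_{X_j} \simeq N_{X_j}(\text{nodes})\) is false when \(N_{X_j}\) has rank \(\geq 2\): the correct description is a \emph{positive elementary modification} in the direction of the transverse branch, which modifies the sheaf by a \emph{rank-one} skyscraper per node, not a full twist. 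The precise statement (Lemma~\ref{lem:Nx_restC}) is that only the quotient \(N_{\Sigma_{k-1}}|_{C_{k-1}}\) gets twisted up by \(\Gamma\), while the subbundle \(N_{C_{k-1}/\Sigma_{k-1}}\) is untouched. Also note a sign error: \(N_{\Sigma_{k-1}}\simeq \O(1)|_{\Sigma_{k-1}}\otimes\pi^*\O(-e_k)\), so the relevant cohomology is \(H^1(\L_{k-1}\otimes f^*\O(-e_k)(\Gamma))\), whose dimension before twisting is exactly \(\sum_{i<k}\max\{0,e_k-e_i-1\} = u(\vec e)-u(\vec e_{<k}) \leq m-1\); this is what makes the twist by \(\Gamma\) kill the \(h^1\). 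With the sign you wrote the numerics do not line up.

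\textbf{Generality in the Hurwitz space (condition \ref{main_inductive:general}).} Here ``openness of the Hurwitz-general locus'' is not an argument: the family of deformations of \(X\) is a proper subvariety of the Hilbert scheme, and there is no a priori reason it hits the open Hurwitz-general locus. Smoothing the node at \(p_i\) creates two new branch points near \(t_i\), and one must show their positions (and in characteristic \(2\), the Artin--Schreier constants \(\omega\)) can be made general. The paper does this in Section~\ref{sec:degeneration_general} by a genuine tangent-space computation, reducing to the surjectivity of a \emph{quadratic} map
\(H^0(N_X)\to\bigoplus_i T^1_{p_i}\otimes T^1_{q_i}\)
and then to the surjectivity of the linear map onto \(\bigoplus T^1_{q_i}\). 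None of this appears in the proposal.

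\textbf{Avoiding \(\Sigma_{k-2}\) (condition \ref{main_inductive:subscroll}).} The claim that a general deformation ``generically avoids'' a codimension-two subvariety is incorrect in this setting precisely because \(X\) is \emph{not} general: \(C_{k-1}\) meets \(\Sigma_{k-2}\) at finitely many points (by the inductive hypothesis applied one level down), so \(X\) meets \(\Sigma_{k-2}\). One must show the deformation actually \emph{moves} \(X\) off \(\Sigma_{k-2}\), i.e.\ that some section of \(N_X\) has nonzero image in \(N_{\Sigma_{k-1}}|_p\) at each such point \(p\). The paper establishes this via Lemma~\ref{Cor:Surj} (surjectivity onto \(H^0(N_{\Sigma_{k-1}}|_{C_{k-1}}(\Gamma))\)) together with Lemma~\ref{Lem:Eval}, which requires the inequality \(\#\Gamma \geq h^1(N_{\Sigma_{k-1}}|_{C_{k-1}})+1\), exactly the numerology built into the choice of \(m\).

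\textbf{Condition \ref{main_inductive:pushforward}.} Your argument checks the basis property only at a generic \(t\) and appeals to ``openness in families.'' What is needed is the basis property at \emph{every} \(t\) (Lemma~\ref{good-embeddings}), and at a fiber \(t_i\) of \(X\) containing a line \(L_i\) the scheme-theoretic fiber is not even finite. The paper handles this via Lemma~\ref{Lem:2PointsOnALine}, which controls the limit of a fiber of a nearby smooth \(C\): it must contain the \(k-2\) points of \(C_{k-1}\) over \(t_i\) plus exactly two points of \(L_i\), and one checks such a configuration still spans. A degree argument (the map \(E\to f_*\L\) is between bundles of the same degree, so generic injectivity gives an isomorphism) could repair your sketch, but you need to say this.

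In short: the reduction to the reducible curve \(X\) is right, and the \(h^1\) bookkeeping, once the signs and the modification formula are corrected, can be made to work. But conditions \eqref{main_inductive:general} and \eqref{main_inductive:subscroll} each require their own nontrivial deformation-theoretic argument (Sections~\ref{sec:degeneration_general} and~\ref{Sec:Subscroll}), and they cannot be dispatched by appeals to openness or generic avoidance.
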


\begin{rem} \label{existence}
Combining Lemma \ref{good-embeddings} with Theorem \ref{thm:main_inductive} parts \eqref{main_inductive:pushforward} and \eqref{main_inductive:general} gives a short alternative proof of the existence theorem in Hurwitz--Brill--Noether theory, i.e., that $W^{\vec{e}}(C, f)$ is non-empty when $\rho'(g, \vec{e}) \geq 0$.
\end{rem}

Our proof of Theorem \ref{thm:main_inductive} will be by induction on the rank \(k\) of \(E\).  For the base case, $k=1$, the curve $C$ is the unique section of the $\pp^0$-bundle $\pp F_1$; in this case, the genus of $C$ is zero. For $k \geq 2$, we will construct a curve satisfying Theorem~\ref{thm:main_inductive} of arbitrary genus \(g \geq u(\vec{e})\).

Write \(\vec{e}_{<k} = (e_1, e_2, \ldots, e_{k - 1})\) for the splitting type of \(F_{k-1}^\vee\), and choose a direct sum decomposition \(E^\vee \simeq F_{k-1} \oplus \O(-e_k)\).    (As mentioned in Section~\ref{Sec:Prelim}, if \(e_k > e_{k-1}\), then the bundle \(F_{k-1}\) is canonical; otherwise we make a choice.)  We have an inclusion \(\Sigma_{k-1} = \pp F_{k-1} \hookrightarrow \Sigma_k = \pp E^\vee\) as a divisor.  By induction on \(k\), there exists  a smooth curve \(C_{k-1}\) in \(\Sigma_{k-1}\) of genus \(u(\vec{e}_{<k})\) satisfying Theorem~\ref{thm:main_inductive}.

For the inductive step, for any genus \(g \geq u(\vec{e})\), we construct a nodal curve \(X\) of arithmetic genus \(g\) as the union of such a curve \(C_{k-1} \subset \Sigma_{k-1}\), a section \(S_k\) corresponding to a choice
of splitting \(\O(-e_k) \hookrightarrow E^\vee\),
and a collection of lines connecting \(S_k\) to \(C_{k - 1}\).  More precisely,
define
\begin{equation} \label{mdef}
m \colonequals g+1 - u(\vec{e}_{<k}).
\end{equation}
 Choose \(m\) general points \(p_1, \dots, p_m\) on \(C_{k-1}\) over general points \(t_1, \dots, t_m\) in \(\pp^1\).  Write \(q_i\) for the unique point on \(S_k\) over \(t_i\), and let \( \Gamma \colonequals p_1+ \cdots + p_m\).  We will write \(L_i\) for the line in the fiber \(\pp E^\vee|_{t_i}\) joining \(p_i\) and \(q_i\).  
Define
\[X \colonequals C_{k-1} \cup S_k \cup L_1 \cup \cdots \cup L_m.\]
By construction, \(X\) has arithmetic genus \(g\). The following diagram illustrates \(X\):

\begin{center}
\begin{tikzpicture}[scale=1.5]
\draw (-3,2.5) node {\(X \colonequals C_{k-1} \cup S_k \cup (L_1 \cup \cdots \cup L_m)\)};
\draw[thick] (1.3,2.15) -- (1.25,3.3);
\draw[thick] (3.5,1.865) -- (3.75,3.2);
\draw[thick] (4.5,1.895) -- (4.5,3.1);
\draw[thick, below] (1.3,2.15) node {\(L_1\)};
\draw[thick, below] (3.5,1.865) node {\(L_{m-1}\)};
\draw[thick, below] (4.5,1.895) node {\(L_m\)};
\filldraw[white] (3.7,2.9) circle (2.5pt);
\filldraw[white] (1.25, 3.13) circle (1.25pt);
\filldraw[white] (4.5, 2.9) circle (1.25pt);
\draw[ left] (0,2) node {\( S_k\)};
\draw[ thick] (0, 2) .. controls (1,2.2) .. (2.5, 2);
\draw[ thick] (2.5, 2) .. controls (4,1.8) .. (5, 2);
\draw (5, 2) -- (5.5, 2.5) -- (5.5, 3.5);
\draw (0, 2) -- (0, 3);
\draw (5, 2) -- (5, 3);
\draw  (0, 2+1) .. controls (1,2.2+1) .. (2.5, 2+1);
\draw  (2.5, 2+1) .. controls (4,1.8+1) .. (5, 2+1);
\draw (0+.5, 2+1+.5) .. controls (1+.5,2.2+1+.5) .. (2.5+.5, 2+1+.5);
\draw (2.5+.5, 2+1+.5) .. controls (4+.5,1.8+1+.5) .. (5+.5, 2+1+.5);
\draw  (5+.5, 2+1+.5) -- (5, 2+1);
\draw  (0, 2+1) -- (0+.5, 2+1+.5);
\draw (0, 2-.75) .. controls (1,2.2-.75) .. (2.5, 2-.75);
\draw (2.5, 2-.75) .. controls (4,1.8-.75) .. (5, 2-.75);
\draw[left] (-2.75,1.5) node {\(\pp^1\)};
\draw[->] (-3,2.25) -- (-3,1.75); 
\draw (-3,2) node[left]{$f_0$};
\draw[ thick] (1, 3.5) .. controls (1.1, 3.5+.1*.2) and (1.4, 3.55) ..(1.5, 3.55); 
\draw[ thick] (1.5, 3.55) .. controls (2, 3.55) and (2.5, 3.4 + .5*.2) .. (3, 3.4); 
\draw[ thick] (3, 3.4) .. controls (3.5, 3.4 - .5*.2) and (4, 3.25 - .5*.15) .. (4.5, 3.25);
\draw[ thick] (4.5, 3.25) .. controls (4.75, 3.25 + .25*.15) and (5, 3.15 + .5*.18) .. (4.5, 3.15);
\draw[ thick] (4.5, 3.15) ..  controls (4, 3.15 - .5*.18) and (3.5, 3.2 - .5*.2).. (3, 3.2);
\draw[ thick] (3, 3.2) .. controls (2.75, 3.2 + .25*.2) and (2, 3.4 + .5*.05).. (1.5, 3.4);
\draw[thick] (1.5, 3.4) .. controls (.5, 3.5 - .05) and (.75, 3.2+.75*.05) .. (1.5,3.2);
\draw[ thick] (1.5,3.2) .. controls (2, 3.2-.5*.05) and (2.5, 3 +.5*.2) .. (3,3);
\draw[ thick] (3,3) .. controls (3.5, 3 -.5*.2) and (4, 3 - .5*.18) .. (4.5,3);
\draw[ thick] (4.5,3) .. controls (4.55, 3 + .05*.18) and (4.7, 3.1 - .05*.5) .. (4.75, 3.1);
\draw[thick] (2.5, 2.5) node {...};
\draw[thick] (.63, 3.27) node{\(C_{k-1}\)};
\draw[thick] (-0.3, 3.05) node{\(\Sigma_{k-1}\)};
\filldraw (1.252,3.227) circle[radius=0.03];
\filldraw (3.731,3.11) circle[radius=0.03];
\filldraw (4.5,3) circle[radius=0.03];
\draw (1.1,3) node{\(p_1\)};
\draw (4.7,2.8) node{\(p_m\)};
\filldraw (1.3,2.14) circle[radius=0.03];
\filldraw (3.5,1.87) circle[radius=0.03];
\filldraw (4.5,1.9) circle[radius=0.03];
\draw (1.15,2.3) node{\(q_1\)};
\draw (4.7,2.1) node{\(q_m\)};
\filldraw (1.3,2.14-.75) circle[radius=0.02];
\filldraw (3.5,1.87-.75) circle[radius=0.02];
\filldraw (4.5,1.9-.75) circle[radius=0.02];
\draw[thick, below] (1.3,2.15-.75) node {\(t_1\)};
\draw[thick, below] (3.5,1.865-.75) node {\(t_{m-1}\)};
\draw[thick, below] (4.5,1.895-.75) node {\(t_m\)};
\draw[thick] (2.5, 1.1) node {...};
\end{tikzpicture}
\end{center}
In subsequent sections, we prove that a general deformation of $X$ satisfies Theorem \ref{thm:main_inductive}.

\section{The normal bundle of \(X\): smoothing the nodal curve}
\label{Sec:Smooth}

In this section, we show that there exists a deformation of $X$ smoothing all the nodes.  As a consequence, we obtain
Theorem~\ref{thm:main_inductive} parts~\eqref{main_inductive:pushforward} and~\eqref{main_inductive:h1vanishes}.  Recall that the deformation theory of $X$ in $\Sigma_k$ is controlled by its normal bundle $N_X \colonequals N_{X/\Sigma_k}$.  For this reason, we now prove several results on the normal bundle of \(X\) restricted to various components.  

\subsection{\boldmath The normal bundles of \(\Sigma_{k-1}\) and \(S_k\)}
We first record some elementary results about the normal bundles of the two projective subbundles \(\Sigma_{k-1}\) and \(S_k\) in \(\Sigma_k\).
Since these subbundles correspond to the direct sum decomposition
\(E^\vee \simeq F_{k - 1} \oplus \O_{\pp^1}(-e_k)\), we have
\begin{align}
N_{\Sigma_{k-1}} &\simeq \O_{\pp E^\vee}(1)|_{\Sigma_{k-1}} \otimes \pi^* \O_{\pp^1}(-e_k) \label{eq:normal_bundle_sigma} \\
N_{S_k} &\simeq \O_{\pp E^\vee}(1)|_{S_k} \otimes \pi^* F_{k-1} \simeq \bigoplus _{i < k} \O_{\pp^1}(e_k - e_i). \label{nsk}
\end{align}
In particular, since \(e_k \geq e_i\) for all \(i\leq k\), we obtain
\begin{equation}\label{eq:h1NS}
H^1(N_{S_k}) = 0.
\end{equation}

\subsection{\boldmath The normal bundle of \(C_{k-1}\)}\label{sec:N_C}
The normal bundle exact sequence
\begin{equation}
\label{eq:standard}
0 \to N_{C_{k-1}/\Sigma_{k-1}} \to N_{C_{k-1}} \to N_{\Sigma_{k-1}}|_{C_{k-1}} \to 0,
\end{equation}
for \(C_{k-1} \subset \Sigma_{k-1} \subset \pp E^\vee\), allows us to combine results about \(N_{\Sigma_{k-1}}\) with our inductive hypotheses about \(C_{k-1} \subset \Sigma_{k-1}\) to deduce information about \(N_{C_{k-1}}\).
By induction we have \(H^1(N_{C_{k-1}/\Sigma_{k-1}}) = 0\) and
\((\pi|_{C_{k-1}})_* \O_{\pp E^\vee}(1)|_{C_{k-1}} \simeq F_{k-1}^\vee\).  Hence combining \eqref{eq:normal_bundle_sigma} with the push-pull formula, we have
\begin{equation*}
(\pi|_{C_{k-1}})_*N_{\Sigma_{k-1}}|_{C_{k-1}} \simeq F_{k-1}^\vee(-e_k) \simeq \bigoplus_{i < k} \O_{\pp^1}(e_i - e_k) .
\end{equation*}
In particular, we have 
\begin{equation}\label{eq:h1_NF}
h^1(N_{\Sigma_{k-1}}|_{C_{k-1}}) = \sum_{i < k} \max \{0, e_k - e_i - 1\}.
\end{equation}

The restricted normal bundle \(N_X|_{C_{k-1}}\) is a positive elementary modification of \(N_{C_{k-1}}\) at the points \(\Gamma = p_1+ \dots + p_m\).
See \cite[Sections 3.1--3.2]{interpolation} for a brief introduction to elementary modifications of normal bundles.
These modifications are described in the following lemma.

\begin{lem}\label{lem:Nx_restC}
The restricted normal bundle \(N_X|_{C_{k-1}}\) sits in the exact sequence
\begin{equation}\label{eq:NX_restricted_C}
0 \to N_{C_{k-1}/\Sigma_{k-1}} \to N_{X}|_{C_{k-1}} \to N_{\Sigma_{k-1}}|_{C_{k-1}}(\Gamma) \to 0.
\end{equation}
\end{lem}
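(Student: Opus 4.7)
The plan is to combine the standard normal bundle sequence \eqref{eq:standard} for the flag \(C_{k-1} \subset \Sigma_{k-1} \subset \Sigma_k\) with the local analysis of \(N_X\) at each of the nodes \(p_i = C_{k-1} \cap L_i\), where \(N_X|_{C_{k-1}}\) arises as a positive elementary modification of \(N_{C_{k-1}/\Sigma_k}\). At each node \(p_i\), the curve \(X\) locally consists of only the two transverse smooth branches \(C_{k-1}\) and \(L_i\): the section \(S_k\) is disjoint from \(\Sigma_{k-1}\) (since it corresponds to the complementary direct summand \(\O(-e_k) \subset E^\vee\)) and hence from \(C_{k-1}\), while the remaining lines \(L_j\) with \(j \neq i\) attach to \(C_{k-1}\) at the distinct points \(p_j\). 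Invoking the standard description of the normal bundle of a nodal curve at a transverse node (as reviewed in \cite[Sections 3.1--3.2]{interpolation}), I would obtain a short exact sequence
\[
0 \to N_{C_{k-1}/\Sigma_k} \to N_X|_{C_{k-1}} \to \bigoplus_{i=1}^m T_{p_i}L_i \to 0,
\]
where each \(T_{p_i}L_i\) is a length-one skyscraper sheaf supported at \(p_i\).

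Combining this with \eqref{eq:standard}, the subsheaf \(N_{C_{k-1}/\Sigma_{k-1}} \subset N_{C_{k-1}/\Sigma_k}\) is also contained in \(N_X|_{C_{k-1}}\), and a snake-lemma computation produces a short exact sequence
\[
0 \to N_{\Sigma_{k-1}}|_{C_{k-1}} \to Q \to k_\Gamma \to 0,
\]
where \(Q \colonequals N_X|_{C_{k-1}}/N_{C_{k-1}/\Sigma_{k-1}}\). It remains to identify \(Q \simeq N_{\Sigma_{k-1}}|_{C_{k-1}}(\Gamma)\).

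Since any locally free extension of \(k_\Gamma\) by a line bundle \(L\) on \(C_{k-1}\) must be isomorphic to \(L(\Gamma)\), it suffices to check that \(Q\) is locally free, equivalently, that the extension class of the above sequence is nonzero at each \(p_i\). This class is represented by the image of \(T_{p_i}L_i\) under the composition \(T_{p_i}L_i \hookrightarrow N_{C_{k-1}/\Sigma_k,p_i} \twoheadrightarrow N_{\Sigma_{k-1}/\Sigma_k,p_i}\); the image is nonzero because \(L_i\) is a line in the fiber \(\pi^{-1}(t_i) \simeq \pp^{k-1}\) that is not contained in the hyperplane \(\Sigma_{k-1} \cap \pi^{-1}(t_i)\) (indeed, \(L_i\) meets \(S_k\) at \(q_i\), a point disjoint from \(\Sigma_{k-1}\)), and so \(T_{p_i}L_i \not\subset T_{p_i}\Sigma_{k-1}\). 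The subtlest point, I expect, will be articulating cleanly the identification of the extension class with this geometric direction; however, since the elementary-modification framework is standard and cited directly in the paper, this should amount primarily to unpacking notation.
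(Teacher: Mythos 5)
Your proposal is correct and follows essentially the same approach as the paper: the paper's proof combines the normal bundle exact sequence \eqref{eq:standard} with the transversality of each \(L_i\) to \(\Sigma_{k-1}\) at \(p_i\), citing \cite[Equation (7)]{interpolation} for the fact that a positive elementary modification transverse to a divisor descends to a modification of the normal bundle of that divisor. Your write-up simply unpacks that citation in full, identifying the modification direction \(T_{p_i}L_i\), checking that its image in \(N_{\Sigma_{k-1}/\Sigma_k,p_i}\) is nonzero because \(L_i\) joins \(p_i\in\Sigma_{k-1}\) to \(q_i\in S_k\) (disjoint from \(\Sigma_{k-1}\)), and concluding via the snake lemma that the quotient is the twist \(N_{\Sigma_{k-1}}|_{C_{k-1}}(\Gamma)\).
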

\begin{proof}
This follows from the normal bundle exact sequence \eqref{eq:standard} and the fact that each line \(L_i\) meets \(\Sigma_{k-1}\) transversely at \(p_i\) as in \cite[Equation (7)]{interpolation}.
\end{proof}

\subsection{\boldmath The normal bundles of the \(L_i\)}\label{sec:N_lines}
The normal bundle of each component \(L_i \subset X\) is also simple to describe.  Write \(\pp E_{t_i}^\vee\) for the fiber of \(\pp E^\vee\) over \(t_i \in \pp^1\).  Then the restricted normal bundle sits in an exact sequence
\begin{equation}\label{eq:normal_line_unmod}
0 \to [N_{L_i/\pp E_{t_i}^\vee} \simeq \O(1)^{\oplus (k - 2)}] \to N_{L_i} \to [N_{\pp E_{t_i}^\vee}|_{L_i} \simeq \O] \to 0.
\end{equation}
Again, the restriction of \(N_X\) to one of the line components \(L_i\) is a positive modification of \(N_{L_i}\).
Since \(C_{k-1} \cup S_k\) meets \(\pp E_{t_i}^\vee\) transversely at \(p_i\) and \(q_i\), we obtain the exact sequence
\begin{equation} \label{lem:NX_restL}
0 \to [N_{L_i/\pp E_{t_i}^\vee} \simeq \O(1)^{\oplus k-2}] \to N_{X}|_{L_i} \to [N_{\pp E_{t_i}^\vee}|_{L_i}(p_i + q_i) \simeq \O(2)] \to 0.
\end{equation}

\subsection{\boldmath Deformations of \(C_{k-1}\) transverse to \(\Sigma_{k-1}\)}
In this section we prove the following lemma.

\begin{lem}
\label{Cor:Surj}
The following map is surjective:
\[
H^0 (N_X) \to H^0 (N_{\Sigma_{k-1}} \vert_{C_{k-1}} (\Gamma)).
\]
\end{lem}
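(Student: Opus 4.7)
The plan is to factor the map as
\[H^0(N_X) \xrightarrow{(a)} H^0(N_X|_{C_{k-1}}) \xrightarrow{(b)} H^0(N_{\Sigma_{k-1}}|_{C_{k-1}}(\Gamma))\]
and show that each of \((a)\) and \((b)\) is surjective. The surjectivity of \((b)\) is immediate from the long exact sequence attached to Lemma~\ref{lem:Nx_restC}: it reduces to the vanishing \(H^1(N_{C_{k-1}/\Sigma_{k-1}}) = 0\), which is exactly part~\eqref{main_inductive:h1vanishes} of the inductive hypothesis of Theorem~\ref{thm:main_inductive} applied to \(C_{k-1} \subset \Sigma_{k-1}\).

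For \((a)\), set \(Y \colonequals S_k \cup L_1 \cup \cdots \cup L_m\) and let \(Z \colonequals p_1 + \cdots + p_m\) denote the divisor on \(Y\) supported along the teeth (the same points as \(\Gamma\), but viewed on \(Y\) rather than on \(C_{k-1}\)). Since the ideal sheaf of \(C_{k-1}\) inside \(X\) is \(\O_Y(-Z)\), we have a short exact sequence
\[0 \to N_X|_Y(-Z) \to N_X \to N_X|_{C_{k-1}} \to 0,\]
so surjectivity of \((a)\) follows from \(H^1(N_X|_Y(-Z)) = 0\). I would prove this by exploiting the comb structure of \(Y\): with backbone \(S_k\) and teeth \(L_i\) meeting the backbone at \(q_i\), the ideal sheaf sequence for \(S_k \subset Y\) gives
\[0 \to \bigoplus_{i=1}^{m} N_X|_{L_i}(-p_i - q_i) \to N_X|_Y(-Z) \to N_X|_{S_k} \to 0,\]
reducing the problem to \(H^1\) vanishing on each piece. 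Twisting \eqref{lem:NX_restL} by \(\O_{L_i}(-p_i - q_i) = \O_{L_i}(-2)\) presents \(N_X|_{L_i}(-p_i - q_i)\) as an extension of \(\O_{L_i}\) by \(\O_{L_i}(-1)^{\oplus(k-2)}\), whose \(H^1\) vanishes on \(\pp^1\). Meanwhile \(N_X|_{S_k}\) is a positive elementary modification of \(N_{S_k}\) at the \(q_i\) (adding the transverse directions of the \(L_i\)); since the modification quotient is a skyscraper, the vanishing \(H^1(N_{S_k}) = 0\) from~\eqref{eq:h1NS} propagates to \(H^1(N_X|_{S_k}) = 0\).

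The main conceptual step is identifying the correct pair of short exact sequences---first peeling \(C_{k-1}\) off \(X\), then peeling \(S_k\) off \(Y\)---to isolate the pieces on which the inductive hypothesis and the explicit normal bundle descriptions of Sections~\ref{sec:N_C}--\ref{sec:N_lines} are directly available. Once this bookkeeping is in place, every remaining cohomological input is either the inductive hypothesis or a routine computation on \(\pp^1\), so I do not expect a serious obstacle beyond carefully tracking the elementary modifications.
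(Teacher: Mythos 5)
Your proposal is correct and follows essentially the same approach as the paper: the same two-step factorization, the same two short exact sequences (peeling off $C_{k-1}$, then peeling off $S_k$), and the same cohomological vanishings from the inductive hypothesis, \eqref{lem:NX_restL}, and \eqref{eq:h1NS}. The only difference is expository (your proposal is slightly more verbose about the elementary modifications).
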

\begin{proof}
The map factors as
\[H^0(N_X) \to H^0(N_X|_{C_{k-1}}) \to H^0(N_{\Sigma_{k-1}}|_{C_{k-1}}(\Gamma)).\]
We consider each of the above maps in turn. The second map appears in the long exact sequence associated to the exact sequence~\eqref{eq:NX_restricted_C}.  By induction, we have $H^1 (N_{C_{k-1}/\Sigma_{k-1}}) = 0$, and surjectivity follows.

Next we show that $H^0(N_X) \to H^0(N_X|_{C_{k-1}})$ is surjective.
By considering the exact sequence for restriction to \(C_{k-1}\):
\[
0 \to N_X \vert_{S_k \cup L_1 \cup \cdots \cup L_m} (- \Gamma) \to N_X \to N_X \vert_{C_{k-1}} \to 0 ,
\]
it suffices to show that $H^1 (N_X \vert_{S_k \cup L_1 \cup \cdots \cup L_m} (- \Gamma)) = 0$.  To see this, restrict further to \(S_k\):
\begin{equation} \label{lns}
0 \to \bigoplus_{i=1}^m N_X \vert_{L_i} (-p_i - q_i ) \to N_X \vert_{S_k \cup L_1 \cup \cdots \cup L_m}(-\Gamma) \to N_X \vert_{S_k} \to 0 .
\end{equation}
By \eqref{lem:NX_restL},
we have $h^1 (N_X \vert_{L_i} (-p_i -q_i )) = 0$.  
Finally, \(N_X|_{S_k}\) is a positive modification of \(N_{S_k}\).  By \eqref{eq:h1NS}, we have \(H^1(N_{S_k}) = 0\); hence \(H^1(N_X|_{S_k}) = 0\).
\end{proof}

\subsection{\boldmath Smoothing the nodes of \(X\)}
Recall that if \(X\) is a nodal curve and \(p\) is a node of \(X\), we have the exact sequence of sheaves
\[0 \to N_X^p \hookrightarrow N_X \xrightarrow{\ev_p} T_p^1 \to 0,\]
which plays a crucial role in the deformation theory of \(X\).  The sheaf \(T_p^1\) is a skyscraper sheaf supported at \(p\). The subsheaf \(N_X^p\) of $N_X$ corresponds to deformations of \(X\) failing to smooth the node \(p\).  Since \(T_p^1\) is supported at \(p\), the sheaves \(N_X\) and \(N_X^p\) are isomorphic away from \(p\).  Likewise, for any finite subset \(\Delta\) of the nodes of \(X\), we can define \(N_X^\Delta\).  (When \(\Delta\) is the entire singular locus of \(X\), this sheaf is known as the \defi{equisingular normal sheaf}, cf.\ \cite[Section 4.7.1]{sernesi}.)

\begin{lem}\label{lem:smooth_defs_exist}
For any points \(p = p_i\) and \(q = q_i\), we have
\[H^1(N_X^{p,q}) = 0.\]
In particular, there exists a deformation of \(X\) smoothing all of the nodes.
\end{lem}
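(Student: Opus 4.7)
The plan is to reduce the statement to two simpler vanishings via the defining exact sequence
\[0 \to N_X^{p,q} \to N_X \to T^1_p \oplus T^1_q \to 0.\]
Taking the long exact sequence, $H^1(N_X^{p,q}) = 0$ is equivalent to the conjunction of (a) $H^1(N_X) = 0$, and (b) the evaluation map $H^0(N_X) \twoheadrightarrow T^1_p \oplus T^1_q$ is surjective. I would establish these two in turn.

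For (a), I would restrict $N_X$ to $C_{k-1}$:
\[0 \to N_X|_{S_k \cup \bigcup_j L_j}(-\Gamma) \to N_X \to N_X|_{C_{k-1}} \to 0.\]
The quotient $N_X|_{C_{k-1}}$ sits in the analog of \eqref{eq:NX_restricted_C}, whose subbundle $N_{C_{k-1}/\Sigma_{k-1}}$ has $H^1 = 0$ by the inductive hypothesis, and whose quotient $N_{\Sigma_{k-1}}|_{C_{k-1}}(\Gamma)$ has $H^1 = 0$ because the $m$ points of $\Gamma$ are general and $m \geq h^1(N_{\Sigma_{k-1}}|_{C_{k-1}}) + 1$ (combining \eqref{mdef}, \eqref{eq:h1_NF}, and the identity $u(\vec{e}) = u(\vec{e}_{<k}) + \sum_{j < k} \max\{0, e_k - e_j - 1\}$). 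The kernel decomposes further via restriction to $S_k$: the $S_k$-quotient is $N_X|_{S_k}$, a positive modification of $N_{S_k}$ with $H^1 = 0$ by \eqref{eq:h1NS}, while the new kernel becomes $\bigoplus_j N_X|_{L_j}(-p_j - q_j)$, each summand an extension of $\O$ by $\O(-1)^{k-2}$ via \eqref{lem:NX_restL}, with vanishing $H^1$. Hence $H^1(N_X) = 0$.

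For (b), I factor the evaluation as $H^0(N_X) \to H^0(N_X|_{L_i}) \to T^1_p \oplus T^1_q$. The second map is surjective: the smoothing map $N_X|_{L_i} \to T^1_p \oplus T^1_q$ factors through the $\O_{L_i}(2)$-quotient from \eqref{lem:NX_restL}, and $H^0(\O_{L_i}(2)) \twoheadrightarrow T^1_p \oplus T^1_q$ is the evaluation at the two distinct points $p, q \in L_i \cong \pp^1$, which is surjective; lifting along the subbundle $\O_{L_i}(1)^{k-2}$ is possible since its $H^1$ vanishes. The first map is surjective provided $H^1(N_X|_Y(-p-q)) = 0$ for $Y := X \setminus L_i$, which follows from a parallel component-wise analysis on $Y$, handling the additional twist $-p$ on $C_{k-1}$ (using that $m - 1 \geq h^1(N_{\Sigma_{k-1}}|_{C_{k-1}})$ together with a global-generation property of $N_{C_{k-1}/\Sigma_{k-1}}$ at the general point $p$) and the twist $-q$ on $S_k$ (where all summands of $N_{S_k}(-q)$ still have non-negative or $-1$ degree on $\pp^1$, hence vanishing $H^1$).

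For the in-particular statement, applying the lemma to every line $L_i$ yields $H^1(N_X) = 0$ together with surjectivity of $H^0(N_X) \to T^1_r$ at every node $r$ of $X$; a general section of $H^0(N_X)$ then has nonzero image in every $T^1_r$, defining a first-order deformation smoothing all nodes simultaneously, which lifts to an actual smoothing family by the unobstructedness $H^1(N_X) = 0$. I expect the main technical obstacle to be the vanishing $H^1(N_X|_Y(-p-q)) = 0$ in step (b), which requires the global-generation condition for $N_{C_{k-1}/\Sigma_{k-1}}$ at the general point $p$; this should follow from a refined version of the inductive hypothesis, but it is the delicate cohomological input of the argument.
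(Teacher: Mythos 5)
Your reduction of $H^1(N_X^{p,q}) = 0$ to the two conditions (a) $H^1(N_X) = 0$ and (b) surjectivity of $H^0(N_X) \to T^1_p \oplus T^1_q$ is correct, and your treatment of (a) works (it is essentially the argument of Lemma~\ref{Cor:Surj}). The gap is exactly where you suspect it, in (b), and it is genuine.

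Restricting $N_X$ to $L_i$ has kernel $N_X|_Y(-p-q)$ with $Y = X \smallsetminus L_i$, and when you then restrict to $C_{k-1}$ the resulting quotient is $N_X|_{C_{k-1}}(-p)$. Feeding this into \eqref{eq:NX_restricted_C} twisted by $(-p)$, the subbundle term becomes $N_{C_{k-1}/\Sigma_{k-1}}(-p)$. The inductive hypothesis gives only $H^1(N_{C_{k-1}/\Sigma_{k-1}}) = 0$, and a sheaf with $h^1 = 0$ can easily acquire $h^1 \neq 0$ after twisting down by a general point (this happens whenever the sheaf is not globally generated there); no global-generation statement for $N_{C_{k-1}/\Sigma_{k-1}}$ is available from the induction, nor is it obviously true. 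So $H^1(N_X|_Y(-p-q)) = 0$ does not follow from the ingredients you cite.

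The paper avoids this by never forming $N_X(-p-q)$: it restricts the equisingular sheaf $N_X^{p,q}$ itself to $C_{k-1} \cup S_k$, and the crucial observation is that $N_X^p|_{C_{k-1}}$ is \emph{not} the same as $N_X|_{C_{k-1}}(-p)$. Passing from $N_X$ to $N_X^p$ removes precisely the rank-one elementary modification of $N_{C_{k-1}}$ at $p$, and by Lemma~\ref{lem:Nx_restC} that modification lives entirely in the quotient $N_{\Sigma_{k-1}}|_{C_{k-1}}$ (because $L_i$ is transverse to $\Sigma_{k-1}$ at $p$). Hence
\[
0 \to N_{C_{k-1}/\Sigma_{k-1}} \to N_X^p|_{C_{k-1}} \to N_{\Sigma_{k-1}}|_{C_{k-1}}(\Gamma - p) \to 0,
\]
with the subbundle \emph{untwisted}. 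Its $H^1$ then vanishes by induction, and the quotient's $H^1$ vanishes because $\Gamma - p$ consists of $m-1$ general points and $h^1(N_{\Sigma_{k-1}}|_{C_{k-1}}) \leq m-1$. The distinction between twisting down at a node and undoing the elementary modification there is the missing idea; without it, your step (b) requires an unproven (and likely hard, possibly false in some cases) global-generation input.
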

\begin{proof}
Restriction to the (disconnected) curve \(C_{k-1} \cup S_k\) gives an exact sequence
\[ 0 \to \bigoplus_{j=1}^m N_X|_{L_j}(-p_j - q_j) \to N_X^{p,q} \to N_X^p|_{C_{k-1}} \oplus N_X^q|_{S_k} \to 0.\]
Making modifications at each of the points \(\{p_1, \dots, p_m\}\) except \(p\), the exact sequence
of Lemma \ref{lem:Nx_restC} induces the exact sequence
\[0 \to N_{C_{k-1}/\Sigma_{k-1}} \to N_X^p|_{C_{k-1}} \to N_{\Sigma_{k-1}}|_{C_{k-1}}(p_1 + \cdots + \hat{p} + \cdots + p_m) \to 0.\]
The subbundle has no higher cohomology by induction (Theorem \ref{thm:main_inductive}\eqref{main_inductive:h1vanishes}).  By \eqref{eq:h1_NF}, 
\[ h^1(N_{\Sigma_{k-1}}|_{C_{k-1}})  = \sum_{i \neq k} \max \{ 0, e_k  - e_i - 1 \} \leq m - 1.\]
Since twisting up by a general point \(p_i\) decreases the \(h^1\) by \(1\), the quotient has no higher cohomology as well.
Thus \(H^1(N_X^{p,q}) = 0\) as desired.

The vanishing of \(H^1(N_X^{p,q})\) implies that each of the evaluation maps 
\[H^0(N_X) \xrightarrow{\ev_{p,q}} T^1_{p,q}\]
is surjective.  A general section of \(N_X\) therefore smooths all of the nodes of \(X\).
\end{proof}

\subsection{Proof of Theorem~\ref{thm:main_inductive} parts~\eqref{main_inductive:pushforward} and~\eqref{main_inductive:h1vanishes}}
By Lemma \ref{lem:smooth_defs_exist}, smooth deformations of the curve \(X\) exist.  We will show that a general such deformation \(C\) has all of the desired properties in Theorem \ref{thm:main_inductive}. 

\begin{lem}
\label{Lem:2PointsOnALine}
Under specialization to $f_0\colon X \to \pp^1$, the limit of a fiber of \(f \colon C \to \pp^1\) is either
\begin{itemize}
\item a fiber of $f_0 \colon X \to \pp^1$ over a point $t \notin \{t_1, \ldots, t_m\}$, or
\item contained in the fiber of $f_0$ over $t_i$ and contains exactly two points on $L_i$.
\end{itemize}
In particular, the limit of a
fibral divisor on $C$ contains at most two points on any $L_i$.
\end{lem}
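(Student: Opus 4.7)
The plan is to pass to a one-parameter smoothing $\mathcal{X}\to B$ of $X$ (provided by Lemma~\ref{lem:smooth_defs_exist}) with generic fiber $C$, and to perform a local computation at each of the two nodes $p_i, q_i$ of $X$ on $L_i$.  Since everything sits inside $\pp E^\vee \times B$ and the map $f$ is the restriction of the projection $\pi$, for a section $t\colon B \to \pp^1$ the fibers $f_b^{-1}(t(b)) = C_b \cap \pi^{-1}(t(b))$ form a flat family of length-$k$ subschemes over $B\setminus\{0\}$, whose flat limit at $b=0$ is a length-$k$ subscheme of $X$ with support contained in $f_0^{-1}(t(0))$ by continuity.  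When $t(0) \notin \{t_1,\ldots,t_m\}$, $\pi^{-1}(t(0))$ meets no $L_j$, so $X\cap \pi^{-1}(t(0)) = f_0^{-1}(t(0))$ is already $0$-dimensional of length $k$ and agrees with the flat limit.

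When $t(0) = t_i$, the intersection $X\cap \pi^{-1}(t_i)$ contains $L_i$ as a $1$-dimensional component, so the flat limit must be extracted by local analysis.  Since $p_i$ is a general (hence unramified) point of $f_0|_{C_{k-1}}$, \'etale-local coordinates on $\mathcal{X}$ near $p_i$ take the form $\{wu = b\}\subset \aa^3$, with $u = f - t_i$, the branch $\{w=0\}$ corresponding to $C_{k-1}$ (parameterized by $u$), and $\{u=0\}$ corresponding to $L_i$ (parameterized by $w$).  The family of fibers $\{u = t(b)-t_i\}\cap\{wu = b\}$ consists of the single point $(w,u) = (b/(t(b)-t_i),\,t(b)-t_i)$ on $\mathcal{X}_b$, whose flat limit at $b=0$ is a length-one subscheme of $\overline{L_i}$.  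The same analysis near $q_i$ contributes a second length-one subscheme on $L_i$.  Meanwhile, the remaining $k-2$ branches of the fiber concentrate near the $k-2$ other points of $f_0^{-1}(t_i)\cap(C_{k-1}\setminus\{p_i\})$, each an unramified transversal intersection of $X$ with $\pi^{-1}(t_i)$, contributing one simple point.  Summing, the flat limit has length $k$, is contained in $f_0^{-1}(t_i)$, and meets $L_i$ in length $2$.

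The main delicate step is to verify that the local contribution on $\overline{L_i}$ is always length one, regardless of the parameterization of $t(b)$: in the generic case $t(b) - t_i = \alpha b + O(b^2)$ with $\alpha \neq 0$ one obtains the interior point $(1/\alpha, 0)$ on $L_i$, but in degenerate cases (e.g.\ $t(b) - t_i$ vanishing to higher order in $b$, or $t(b) \equiv t_i$) the point may escape the local chart toward the opposite node, which is why using the charts at $p_i$ and $q_i$ in tandem is essential.  The final ``in particular'' statement follows immediately, since a fibral divisor on $C$ is a sub-divisor of some fiber $f_b^{-1}(t(b))$ and hence its flat limit is a subscheme of the one just computed, meeting each $L_i$ in length at most two.
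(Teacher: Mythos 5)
Your proof is correct and rests on the same underlying observation as the paper's, but it routes through an unnecessarily delicate local computation. The paper's argument is exactly what appears in your ``Meanwhile'' sentence: the $k-2$ intersections of $X$ with $\pi^{-1}(t_i)$ at the smooth points of $X$ on $C_{k-1}$ are transversal, hence persist with multiplicity exactly one under small deformation, and so account for $k-2$ of the $k$ points of the limit. Since the limit has total length $k$ and is supported in $X\cap\pi^{-1}(t_i) = L_i\cup\{\text{those }k-2\text{ points}\}$, the remaining length $2$ is forced onto $L_i$ with no further work. Your \'etale-local analysis at the nodes is a pleasant way to see where the two points go in the generic case, but the ``delicate step'' you flag is moot once the counting argument is in place; moreover, your charts implicitly assume the smoothing parameter at each node is exactly $b$ rather than a unit multiple or a higher-order function of $b$, so actually closing the degenerate cases by gluing the two charts would take additional care. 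I would invert the emphasis: lead with transversality plus length-counting (the paper's proof) and drop the local charts entirely.
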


\begin{proof}
The limit of a fiber is necessarily supported on a fiber, say over \(t \in \pp^1\). If \(f_0^{-1}(t)\) does not contain a line, there is nothing to prove. Otherwise, the limit of a fiber contains the points on \(C_{k-1}\) over \(t = t_i\) that are in the smooth locus of \(X\), which accounts for \(k-2\) of the \(k\) points of the limit.
Thus the limit of a fiber must contain exactly \(2\) points on $L_i$.
\end{proof}

\begin{proof}[Proof of Theorem~\ref{thm:main_inductive} parts~\eqref{main_inductive:pushforward} and~\eqref{main_inductive:h1vanishes}]
Applying Lemma~\ref{Lem:2PointsOnALine}, the limit of any fiber of \(f \colon C \to \pp^1\)
spans the corresponding fiber of \(\pp E^\vee\); therefore, the same is true for any fiber of \(f\).
Part  \eqref{main_inductive:pushforward} therefore follows from
 Lemma \ref{good-embeddings}.  
 Next, the bundle \(N_X\) is a positive modification of \(N_{X}^{p,q}\):
 \begin{equation}
 \label{eq:Npq}
 0 \to N_X^{p, q} \hookrightarrow N_X \xrightarrow{\ev_{p,q}} T_{p,q}^1 \to 0.
 \end{equation}
By considering the long exact sequence in cohomology, and using Lemma~\ref{lem:smooth_defs_exist} and the fact that \(T_{p,q}^1\) is punctual, we see that \(H^1(N_X) = 0\).  By semicontinuity, the same is true for a deformation \(C\) of \(X\), which completes the proof of Theorem \ref{thm:main_inductive}\eqref{main_inductive:h1vanishes}.
\end{proof}

\medskip

Let $\Hilb_{k,g}(\pp E^\vee)$ be the Hilbert scheme of curves in $\pp E^\vee$ having arithmetic genus $g$ and relative degree $k$ over $\pp^1$.  Since \(H^1(N_X) = 0\) by Theorem~\ref{thm:main_inductive}\eqref{main_inductive:h1vanishes}, we obtain the following.
 
\begin{cor}
\label{Cor:SmoothPoint}
The curve $X \subset \pp E^\vee$ is a smooth point of $\Hilb_{k,g}(\pp E^\vee)$, and hence lives in a unique component of $\Hilb_{k,g}(\pp E^\vee)$.
\end{cor}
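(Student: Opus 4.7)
My plan is to invoke the standard deformation theory of the Hilbert scheme, since essentially all the work has already been done. First I would note that $X$, being a nodal curve in the smooth projective variety $\pp E^\vee$, is a local complete intersection, so its normal sheaf $N_X$ is locally free and governs the deformation theory of $[X] \in \Hilb_{k,g}(\pp E^\vee)$: the Zariski tangent space is $H^0(X, N_X)$, and the obstructions to extending infinitesimal deformations of $X$ in $\pp E^\vee$ lie in $H^1(X, N_X)$.

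The essential input is the vanishing $H^1(N_X) = 0$, which was just established in the proof of Theorem~\ref{thm:main_inductive}\eqref{main_inductive:h1vanishes} via the exact sequence \eqref{eq:Npq} together with Lemma~\ref{lem:smooth_defs_exist}. Since the obstruction space vanishes, the local ring of $\Hilb_{k,g}(\pp E^\vee)$ at $[X]$ is regular of dimension $h^0(N_X)$, so $[X]$ is a smooth point.

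The final assertion that $X$ lies in a unique component is then immediate: a smooth (hence normal) point of a scheme belongs to exactly one irreducible component. There is no genuine obstacle in this argument; the corollary is simply repackaging the cohomological vanishing achieved in the previous subsection into its standard deformation-theoretic consequence.
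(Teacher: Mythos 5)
Your proposal is correct and matches the paper's own (implicit) argument exactly: the paper simply observes that $H^1(N_X) = 0$ from Theorem~\ref{thm:main_inductive}\eqref{main_inductive:h1vanishes} and invokes the standard fact that vanishing of the obstruction space makes $[X]$ a smooth point of the Hilbert scheme, hence contained in a unique component. Your added remarks about $X$ being a local complete intersection and $N_X$ being locally free are the correct justification for why this deformation theory applies.
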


\section{The nodal curve is general}
\label{sec:degeneration_general}

We saw in Corollary~\ref{Cor:SmoothPoint} above that $X$ is contained in a unique component of $\Hilb_{k,g}(\pp E^\vee)$.
In this section, we will establish
Theorem~\ref{thm:main_inductive}\eqref{main_inductive:general}, i.e., we will
show that this component dominates some component of the Hurwitz space $\H_{k,g}$.
(Recall that when the characteristic exceeds \(k\), the Hurwitz space is irreducible and thus this component dominates the entire Hurwitz space.)
Since covers are determined by their ramification data, this can be done by analyzing the ramification
behavior of a general deformation of \(X\).

Recall that, by the Riemann--Hurwitz formula, a simply branched cover
\(C \to \pp^1\)
has \(2g + 2k - 2\) branch points when the characteristic is zero or odd.
However, when the characteristic is \(2\), there is necessarily wild ramification.
The generic behavior is that the cover looks locally in a neighborhood of a ramification point like an Artin-Schreier cover
\[y^2 + y = \frac{\omega}{x},\]
for some constant \(\omega\) (depending on the choice of local coordinate \(x\) at the branch point).
In this case the cover has \(g + k - 1\) branch points.

Our goal is thus to show that a general deformation of \(X\) is simply branched with general branch points,
and moreover, if the characteristic is \(2\), then the corresponding constants \(\omega\) are also general.

By induction, these conditions hold for \(C_{k-1} \to \pp^1\).
We therefore have to check that a general deformation of \(X\)
creates the claimed general branching behavior near each \(t_i\). 
Recall from \eqref{lem:NX_restL} that \(N_X|_{L_i}\) has a quotient \(N_{\pp E_{t_i}^\vee}|_{L_i}(p_i+q_i)\).
Let \(x\) be a local coordinate for \(\pp^1\) at \(t_i\),
inducing an isomorphism
\[N_{\pp E_{t_i}^\vee}|_{L_i}(p_i+q_i) \simeq \O_{L_i}(p_i + q_i).\]
Let \(s\) be a global coordinate on \(L_i\), so that \(p_i\) is at \(s = \infty\) and \(q_i\) is at \(s = 0\).
Then if \(\sigma\) is a section of \(N_X\) whose image in \(H^0(\O_{L_i}(p_i + q_i))\) is \(as + b/s + c\),
the geometry of the first-order deformation of \(X\) corresponding to \(\sigma\) is given by the graph of  the image of \(\sigma\):
\begin{equation} \label{locgeom}
x = (as + b/s + c) \cdot \epsilon.
\end{equation}

\begin{lem} \label{lem:deform_p_eqs}
The branch points of \eqref{locgeom} occur at
\[x = (c \pm 2\sqrt{ab}) \cdot \epsilon.\]
Furthermore, if the characteristic is \(2\), then after a change of coordinates,
\eqref{locgeom} becomes:
\[y^2 + y = \frac{\sqrt{ab} \cdot \epsilon}{x - c\epsilon}.\]
\end{lem}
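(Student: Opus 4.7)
The plan is to work directly with the local equation \eqref{locgeom} and extract both statements by elementary differentiation and an algebraic substitution. Viewing $x$ as a function of $s$ over the ring $k[\epsilon]/\epsilon^2$, branch points of the degree-$2$ cover arising from smoothing the nodes at $p_i$ and $q_i$ occur where $dx/ds$ vanishes. Differentiating gives $dx/ds = (a - b/s^2)\epsilon$, which vanishes precisely at $s = \pm\sqrt{b/a}$. Substituting back into $x = (as + b/s + c)\epsilon$ and simplifying $as + b/s = \pm 2\sqrt{ab}$ at these two critical values yields the branch locus $x = (c \pm 2\sqrt{ab})\epsilon$, proving the first assertion. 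In characteristic $\neq 2$, these two values are distinct (as $a,b$ are general) and correspond to honest simple branching.

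For the characteristic $2$ statement, the plan is to exhibit an explicit change of variables putting the cover in Artin--Schreier form. First translate $x \mapsto x - c\epsilon$ to eliminate $c$, reducing to $x = (as + b/s)\epsilon$. Next, introduce the rescaled parameter $u = s\sqrt{a/b}$, under which the right-hand side becomes $\sqrt{ab}\,(u + u^{-1})\epsilon$; this symmetrizes the dependence on $s$ versus $1/s$. The candidate Artin--Schreier coordinate is then $y = 1/(u+1)$. A short computation in characteristic $2$ gives $y+1 = u/(u+1)$, hence
\[
y^2 + y = y(y+1) = \frac{u}{(u+1)^2} = \frac{u}{u^2+1},
\]
where the last equality uses $(u+1)^2 = u^2 + 1$ in characteristic $2$. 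On the other hand, $\sqrt{ab}\,\epsilon/(x-c\epsilon) = 1/(u + u^{-1}) = u/(u^2+1)$, so the identity $y^2 + y = \sqrt{ab}\,\epsilon/(x - c\epsilon)$ holds on the nose.

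The main obstacle is identifying the correct Artin--Schreier coordinate $y$ in characteristic $2$; once the substitution is written down, both verifications are routine. The structure of the answer reflects the symmetry between $s$ and the ``dual'' coordinate $b/(as)$ interchanged by the deck transformation of the double cover, which is what makes the substitution $u = s\sqrt{a/b}$ and subsequent $y = 1/(u+1)$ natural. I expect no conceptual subtleties beyond these algebraic manipulations, since we are only tracking first-order behavior in $\epsilon$.
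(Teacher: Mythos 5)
Your proposal is correct and uses the same method as the paper: computing $dx/ds = (a - b/s^2)\epsilon$ to find the ramification points $s = \pm\sqrt{b/a}$, then making the coordinate change $s = \sqrt{b/a}(1 + 1/y)$ in characteristic $2$. Your two-step substitution ($u = s\sqrt{a/b}$ followed by $y = 1/(u+1)$) composes to exactly the paper's change of coordinates (since $1/y = u + 1 = u - 1$ in characteristic $2$), with the verification spelled out a bit more explicitly.
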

\begin{proof}
The ramification points satisfy \(dx/ds = (a - b/s^2) \cdot \epsilon = 0\).
Solving for \(s\), we find \(s = \pm \sqrt{b/a}\); substituting this into \eqref{locgeom},
we conclude that \eqref{locgeom} is branched at the claimed values of \(x\).

Note that, when the characteristic is \(2\), there is only one branch point, at \(x = c\epsilon\).
To obtain the desired equation, we
make the change of coordinates:
\[s = \sqrt{\frac{b}{a}} \cdot \left(1 + \frac{1}{y}\right). \qedhere\]
\end{proof}

The key point is that the branching behavior of the corresponding deformation of \(X\) is determined
by the products \(ab\).
(Since the \(t_i\) were already general, the value of \(c\) can be absorbed into a shift of the \(t_i\).)
The smoothing parameters \(a\) and \(b\) appearing in \eqref{locgeom} are the images of the section \(\sigma\) in the deformation space \(T_{p_i}^1 \oplus T_{q_i}^1\) of the singularities at \(p_i\) and \(q_i\).
The parameter \(ab\) appearing in Lemma~\ref{lem:deform_p_eqs} is therefore the image of \(\sigma\) under the quadratic map
\begin{equation} \label{quadmap}
H^0(N_X) \to \bigoplus_{i=1}^m T_{p_i}^1 \oplus T_{q_i}^1 \to \bigoplus_{i=1}^m T_{p_i}^1 \otimes T_{q_i}^1,
\end{equation}
which multiplies the smoothing parameters of the two nodes \(p_i\) and \(q_i\) together.
Our goal is thus to show the surjectivity of the quadratic map \eqref{quadmap}.

The first step is to reduce this problem to the surjectivity of a linear map.
By \eqref{eq:Npq}, together with Lemma~\ref{lem:smooth_defs_exist}, there is a section $s \in H^0(N_X)$ whose image under \eqref{quadmap} is nonzero in each component on the right-hand side. Now suppose
\[s' \in H^0(N_X|_{S_k \cup L_1 \cup \cdots \cup L_m}(-\Gamma) ) = \ker[H^0(N_X) \to H^0(N_X|_{C_{k-1}})] \subset H^0(N_X).\]
Then $s' + s$ has fixed nonzero evaluation in each factor $T_{p_i}^1$ (independent of choice of $s'$).
To show that \eqref{quadmap} is surjective, it therefore suffices to show that such sections $s'$ can attain any collection of values in the $T_{q_i}^1$ factors, i.e., to establish the surjectivity of the linear map
\[  
H^0(N_X \vert_{S_k \cup L_1 \cup \cdots \cup L_m}(-\Gamma))  \to \bigoplus_{i=1}^m T_{q_i}^1.
\]

By \eqref{lem:NX_restL}, we have $H^1(N_X \vert_{L_i}(-p_i - q_i)) =0$, so considering the long exact sequence in cohomology associated to \eqref{lns}, we see that 
\(N_X \vert_{S_k \cup L_1 \cup \cdots \cup L_m}(-\Gamma) \to N_X \vert_{S_k}\)
is surjective on global sections.
It thus suffices to show that $H^0(N_X \vert_{S_k}) \to \bigoplus_{i=1}^m T_{q_i}^1$ is surjective.  To see this, recall that we have an exact sequence
\[
0 \rightarrow N_{S_k} \rightarrow N_X \vert_{S_k} \rightarrow \bigoplus_{i=1}^m T_{q_i}^1 \rightarrow 0,
\]
and $H^1(N_{S_k}) = 0$ by (\ref{eq:h1NS}).

\section{Intersections with subscrolls}
\label{Sec:Subscroll}

In this section, we prove Theorem~\ref{thm:main_inductive}\eqref{main_inductive:subscroll}.  
To do so, we first prove the following general fact about evaluation maps of line bundles. 

\begin{lem}
\label{Lem:Eval}
Let $C$ be a smooth curve, and $\L$ a line bundle on $C$, and $D$ an effective divisor on $C$, and $\Delta \subset \L|_D$ a proper linear subspace.  Let $\Gamma$ be a general set of points on $C$ with $\# \Gamma \geq h^1 (\L) + 1$.
Then the image of the evaluation map
\[
H^0 (\L (\Gamma)) \to \L|_D
\]
is not contained in $\Delta$.
\end{lem}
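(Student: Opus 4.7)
The plan is to reduce to \(\Delta\) being a hyperplane and then combine Serre duality with a ``move-a-point'' variation in \(\Gamma\). Since enlarging \(\Gamma\) only enlarges \(H^0(\L(\Gamma))\) and hence its image in \(\L|_D\), it suffices to handle the case \(\#\Gamma = h^1(\L) + 1\). Any proper \(\Delta\) is contained in some hyperplane of \(V \colonequals \L|_D\), so we may assume \(\Delta\) is a hyperplane, given as the vanishing locus of a nonzero functional \(\phi \in V^*\). If \(\phi\) does not vanish on the image \(V_0\) of \(H^0(\L) \to V\), then some \(s \in H^0(\L) \subset H^0(\L(\Gamma))\) already witnesses the conclusion, so we may assume \(\phi \in V_0^\perp\). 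Using the long exact sequence of \(0 \to K - \L \to K + D - \L \to (K+D-\L)|_D \to 0\) together with Serre duality, \(V_0^\perp\) is identified with the image of \(H^0(K+D-\L) \to V^* \cong H^0((K+D-\L)|_D)\); thus \(\phi\) lifts to a section \(\tilde\phi \in H^0(K+D-\L) \setminus H^0(K-\L)\), where \(K\) denotes the canonical bundle.

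For a general \(\Gamma\) disjoint from \(D\) and any \(s \in H^0(\L(\Gamma))\), the meromorphic \(1\)-form \(s \cdot \tilde\phi\) has simple poles only on \(\Gamma \cup D\). The residue theorem yields
\[
\phi(\ev_D(s)) \;=\; -\sum_{q \in \Gamma} \Res_q(s \tilde\phi),
\]
which pairs the residues of \(s\) along \(\Gamma\) with the values of \(\tilde\phi\) on \(\Gamma\). By Serre duality applied to the sequence \(0 \to \L \to \L(\Gamma) \to \L(\Gamma)|_\Gamma \to 0\), the image of the residue map \(H^0(\L(\Gamma))/H^0(\L) \hookrightarrow \L(\Gamma)|_\Gamma\) is exactly the annihilator of the image of the evaluation \(H^0(K-\L) \to (K-\L)|_\Gamma\). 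Consequently, if the image of \(\ev_D\) were contained in \(\Delta\), then \(\tilde\phi|_\Gamma\) would have to lie in the image of \(H^0(K-\L)\); that is, \(\tilde\phi|_\Gamma = \tau|_\Gamma\) for some \(\tau \in H^0(K-\L)\).

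The conclusion then follows by a move-a-point argument. Suppose for contradiction this failure occurs for every general \(\Gamma\) of size \(n \colonequals h^1(\L) + 1\). Write \(\Gamma = \{q_1, q_2, \ldots, q_n\}\) and compare with \(\Gamma' = \{q_1', q_2, \ldots, q_n\}\) for a general \(q_1' \in C\). The corresponding sections \(\tau, \tau' \in H^0(K-\L)\) differ by an element vanishing at the \(n - 1 = h^0(K - \L)\) general points \(q_2, \ldots, q_n\), so by a dimension count \(\tau = \tau'\). Hence \((\tilde\phi - \tau)(q_1') = 0\) as \(q_1'\) varies over an open subset of \(C\), forcing \(\tilde\phi = \tau \in H^0(K - \L)\), contrary to our choice of \(\tilde\phi\). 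The main subtlety is to identify the residue pairing precisely with Serre duality---particularly when \(D\) is non-reduced, where Grothendieck duality on \(D \hookrightarrow C\) takes the place of simple residues; I expect this to go through with care.
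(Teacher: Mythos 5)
Your argument is correct, and it takes a genuinely different route from the paper. You attack the problem head-on via Serre duality: you identify $V_0^\perp$ with the image of $H^0(K+D-\L)$, lift $\phi$ to a section $\tilde\phi$ with poles along $D$, convert the evaluation map into a residue pairing along $\Gamma$, and then run a move-a-point argument using the dimension count $h^0(K-\L) = \#\Gamma - 1$ to force $\tilde\phi$ to have no poles along $D$, a contradiction. Each step checks out (the key dimension count works because $n-1$ general points impose independent conditions on the $(n-1)$-dimensional space $H^0(K-\L)$, and the uniqueness of $\tau$ determined by its values at $q_2,\dots,q_n$ makes the comparison $\tau=\tau'$ legitimate), and your flagged caveat about the residue pairing for non-reduced $D$ is a real but standard point (the Serre duality pairing $\L|_D \otimes (K+D-\L)|_D \to k$ is precisely the generalized residue, and the residue theorem applies to arbitrary meromorphic $1$-forms).

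The paper's proof is quite a bit more elementary and avoids Serre duality entirely. It first reduces to the case $h^0(\L)=h^1(\L)=0$ by twisting up or down by general points (using $\Gamma$ general). Then, taking a general divisor $A$ of the same degree as $D$, it observes that $h^1(\L(A-D))=0$ by semicontinuity from the specialization $A=D$; hence $H^0(\L(A)) \to \L|_D$ is surjective. The one-dimensional subspaces $H^0(\L(a_i)) \subset H^0(\L(A))$ for each point $a_i$ of $A$ span $H^0(\L(A))$, so at least one of them has image outside the proper subspace $\Delta$; since that $a_i$ is general, a single general point of $\Gamma$ already suffices. The trade-off: the paper's argument is shorter and stays within basic cohomology and semicontinuity, whereas yours makes the duality $H^1(\L)^* = H^0(K-\L)$ the organizing principle and shows explicitly how the obstruction to improving the evaluation map is controlled by $H^0(K+D-\L)$ — a more conceptual but heavier approach that also requires more care when $D$ is non-reduced.

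One small point of exposition: before invoking the move-a-point step, it is worth noting explicitly that the condition ``image of $\ev_D \not\subseteq \Delta$'' is open in $\Gamma$ (on the locus where $h^0(\L(\Gamma))$ is constant), so its negation for \emph{general} $\Gamma$ really is the hypothesis you contradict.
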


\begin{proof}
Let $p \in C$ be a general point.  If $h^0 (\L) \neq 0$ and the statement holds for $\L (-p)$, then it holds for $\L$ as well.  Similarly, if $h^1 (\L) \neq 0$ and the statement holds for $\L(p)$, then it holds for $\L$ as well.  We may therefore reduce to the case where $h^0 (\L) = h^1 (\L) = 0$.

Let $A = a_1 + \cdots + a_n$ be a general effective divisor on $C$ of the same degree as $D$.  We have an exact sequence
\[
 H^0 (\L (A)) \to \L|_D \to H^1 (\L(A - D )) .
\]
We claim that $h^1 (\L(A - D )) = 0$. Indeed, specializing to the case $A = D$, we have $h^1(\L) = 0$, so since $A$ is general, the claim follows by semicontinuity.

In particular, the evaluation map $ H^0 (\L (A)) \to \L|_D $ is surjective.  For each $i$, there is a nonzero section of $H^0 (\L(A))$ that vanishes on $A -a_i$, and these sections span $H^0 (\L (A))$.  It follows that there exists an $i$ such that the image of $H^0 (\L(a_i))$ is not contained in $\Delta$.
\end{proof}

\begin{proof}[Proof of Theorem~\ref{thm:main_inductive}\eqref{main_inductive:subscroll}]
By induction on $k$, the curve $C_{k-1}$ meets $\Sigma_{k-2}$ at finitely many points, and does not meet $\Sigma_{k-3}$.  Let $p \in C_{k-1}$ be one of the finitely many points in the intersection $C_{k-1} \cap \Sigma_{k-2}$.  We show that there is a deformation of \(X\) that does not meet $\Sigma_{k-2}$ in a neighborhood of $p$.  Since both \(\Sigma_{k-2}\) and $C_{k-1}$ are contained in \(\Sigma_{k-1}\), it suffices to show that there exists a section of \(N_X\) whose image in \(N_{\Sigma_{k-1}}|_p\) is nonzero. 
By Lemma~\ref{Cor:Surj}, the map
\[
H^0 (N_X) \to H^0 (N_{\Sigma_{k-1}} \vert_{C_{k-1}}(\Gamma))
\]
is surjective.  Now consider the evaluation map 
\begin{equation}\label{eq:eval_p}
H^0 (N_{\Sigma_{k-1}} \vert_{C_{k-1}}(\Gamma)) \to N_{\Sigma_{k-1}}|_p.
\end{equation}
By construction,
\begin{align*}
\#\Gamma = m &\geq 1 + \sum_{i < k} \max \{ 0, e_k - e_i - 1 \} && \text{ by \eqref{mdef}}  \\
&= 1 + h^1(N_{\Sigma_{k-1}} \vert_{C_{k-1}})  && \text{ by \eqref{eq:h1_NF}}.
\end{align*}
Hence, applying Lemma \ref{Lem:Eval}, we see that \eqref{eq:eval_p} is nonzero.  The preimage of the zero subspace of \(N_{\Sigma_{k-1}}|_p\) in \(H^0(N_X)\) is therefore a hyperplane, and a general section of \(N_X\) separates $C_{k-1}$ and \(\Sigma_{k-2}\).
This shows that a general deformation of $X$ does not meet $\Sigma_{k-2}$.

Since a general deformation of $X$ is irreducible and $X$ is not contained in $\Sigma_{k-1}$, a general deformation of $X$ meets $\Sigma_{k-1}$ in finitely many points.
\end{proof}

\section{``Relative" ampleness: maps to the nonnegative scroll}

In this section, we prove the necessary and sufficient conditions for (birational) relative very ampleness. We start by proving the statements in the introduction, and then give the full statement in Section \ref{stronger}.

\begin{proof}[Proof of Theorem \ref{bi-rel-va-intro}]
The dimension of the span of a fibral divisor under $C \to \pp^r$ is the dimension of its span in $C \to \pp E^\vee \to \pp \left( E^\vee / F_-\right)$. It is therefore necessary that the fibers of $\pp \left( E^\vee / F_-\right) \to \pp^1$ have dimension at least $p$, equivalently that $e_{k-p} \geq 0$.

To prove that $e_{k-p} \geq 0$ is sufficient, we induct on \(k\).  We assume the statement for all \(k' < k\).  Let 
\[X = C_{k-1} \cup L_1 \cup \cdots L_m \cup S_k\]
be the degenerate curve constructed in Section~\ref{sec:our_degeneration}, and let \(C\) be a general deformation of \(X\) in \(\pp E^\vee\).

Assume that \(\vec{e}\) has at least \(p+1\) nonnegative parts. Let $(f,\L) \in \we$ be general. We will show that only finitely many fibral divisors \(D\) of degree \(p+1\) satisfy $h^0 (C,\L(-D)) \geq h^0 (C,\L) - p$.  
Let \(x\) be a general point on \(C\).  Every component of \(C \times_{\pp^1} \cdots \times_{\pp^1} C\) dominates \(C\), and hence contains a divisor whose support contains \(x\).
By upper-semicontinuity, it therefore suffices to show \(h^0 (C,\L(-D)) = h^0(C, \L) - (p+1)\) for every divisor \(D\) of the form \(D' + x\), where \(D'\) is supported in the same fiber as \(x\).

To establish this, let $x_0$ be a general point on the section $S_k$, and suppose $x$ specializes to $x_0$.  Every point of $X$ distinct from $x_0$ in the same fiber lies on the component $C_{k-1}$, and by induction, the restriction of $\L$ to $C_{k-1}$ is birationally relatively $(p-1)$-very ample.  Since \(x_0\) is general, it follows that all divisors $D'$ of degree $p$ on $C_{k-1}$ in the fiber \(f_0^{-1}(f_0(x_0))\) have linear span in \(\pp \left( F_{k-1} / F_-\right) \) that is $(p-1)$-dimensional.  Since \(x_0\) does not lie in $\pp \left( F_{k-1} / F_-\right) $, the linear span of $D'+x_0$ in \(\pp \left( E^\vee / F_-\right) \) is $p$-dimensional, and the result follows.
\end{proof}

The remainder of this subsection is devoted to proving Theorem \ref{rel-va-intro}.
If \( \vec{e}\) has at least \(p+2\) nonnegative parts, then \(\vec{e}_{<k}\) has at least \(p+1\) nonnegative parts.  Therefore \(\L|_{C_{k-1}}\) is birationally relatively \(p\)-very ample by Theorem~\ref{bi-rel-va-intro}.  There are finitely many ``bad'' fibers containing either a line \(L_i\) or a fibral divisor \(D'\) of degree $p+1$ on \(C_{k-1}\) with \(h^0(C_{k-1}, \L(-D')) \geq h^0(C_{k-1}, \L) - p\).  Any fibral divisor \(D\) of degree $p+1$ on \(X\) that is not supported in a bad fiber has linear span in \(\pp \left( E^\vee / F_{-} \right)\) of dimension \(p\).  It therefore suffices to show that no divisor of degree \(p+1\) supported in a bad fiber is a limit of a divisor from \(C\) failing to impose independent conditions on \(\L\).  We split this into two cases, first considering the fibers that contain a line \(L_i\).

\begin{lem}
\label{Lem:FibralSupport}
Suppose that $e_{k-p-1} \geq 0$.
Let $D$ be the limit on $X$ of a fibral divisor of degree $p+1$ on $C$ whose span in $\pp \left( E^\vee / F_{-} \right)$ has dimension $p-1$ or less.  Then $D$ is supported on $C_{k-1}$ in a fiber not containing a line and has span of dimension \(p-1\).
\end{lem}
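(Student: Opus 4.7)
The plan is to reduce, via upper semi-continuity, to studying \(D\) supported in a single fiber \(f_0^{-1}(t)\), and then analyze where in that fiber \(D\) can lie. I will use two main inputs. First, Lemma~\ref{Lem:2PointsOnALine} constrains the support of \(D\) on each line \(L_i\) to at most two points. Second, the birational relative \(p\)-very ampleness of \(\L|_{C_{k-1}}\) established at the start of this subsection, combined with the elementary fact that birational relative \(p\)-very ampleness implies its analogue in every lower degree, guarantees that for all but finitely many fibers of \(C_{k-1} \to \pp^1\), every fibral divisor of degree at most \(p+1\) attains its maximum span. Crucially, since \(t_1, \ldots, t_m\) were chosen generally on \(\pp^1\), the fibers \(C_{k-1}|_{t_i}\) also avoid this finite exceptional locus.

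The key linear-algebra input is that in each fiber of \(\pi \colon \pp E^\vee \dashedrightarrow \pp(E^\vee/F_-)\), the hyperplane \(\pp(F_{k-1}/F_-)\) contains the image of \(C_{k-1}\), whereas the image of the \(S_k\)-point (and of any point on \(L_i\) distinct from \(p_i\)) lies off this hyperplane. Moreover, \(\pi\) maps \(L_i\) isomorphically onto a projective line that meets the hyperplane only at \(\pi(p_i)\). Together these facts reduce each span computation to a short combinatorial count of how many points of \(D\) lie on each piece.

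For non-line fibers \(t \neq t_i\): if \(D\) contains the unique \(S_k\)-point \(q\), writing \(D = q + D'\) yields \(\dim \operatorname{span}(\pi(D)) = \dim \operatorname{span}(\pi(D')) + 1 = p\), since \(D'\) has degree \(p\) and attains span \(p - 1\) by the remarks above; this contradicts the hypothesis. Hence \(D \subset C_{k-1}|_t\), and taking any degree-\(p\) subdivisor shows \(\dim \operatorname{span}(\pi(D)) \geq p - 1\). Combined with the hypothesis, this yields equality.

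The main obstacle will be the line-fiber case \(t = t_i\). By Lemma~\ref{Lem:2PointsOnALine} I decompose \(D = D_L + D_C\) with \(\deg D_L = a \in \{0, 1, 2\}\) on \(L_i\) and \(\deg D_C = p + 1 - a\) on the smooth locus of \(C_{k-1}|_{t_i}\); since \(t_i\) is general, \(D_C\) attains its maximum span. The cases \(a = 0\) and \(a = 1\) immediately give \(\dim \operatorname{span}(\pi(D)) = p\). For \(a = 2\), the span of \(D\) equals \(p\) unless \(\pi(p_i) \in \operatorname{span}(\pi(D_C))\); but this inclusion would force the degree-\(p\) fibral divisor \(D_C + p_i\) on \(C_{k-1}|_{t_i}\) to span only \(\pp^{p-2}\), contradicting the generic span of \(p - 1\) in this fiber. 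Degenerate configurations (coincident limit points on \(L_i\) or limit points at a node) are subsumed by the same linear-algebra analysis.
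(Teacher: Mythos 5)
Your strategy is close to the paper's at the structural level: both proofs use Lemma~\ref{Lem:2PointsOnALine} to control the line components, and both reduce a span computation in \(\pp(E^\vee/F_-)\) to the geometry of the hyperplane \(\pp(F_{k-1}/F_-)\) containing the image of \(C_{k-1}\). But there is a genuine gap in your input in the case of a non-line fiber.

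The crucial point is which ampleness statement for \(\L|_{C_{k-1}}\) you are allowed to use. You invoke only \emph{birational} relative \(p\)-very ampleness of \(\L|_{C_{k-1}}\) (from Theorem~\ref{bi-rel-va-intro}), plus the observation that this propagates to lower degrees. This controls the span of low-degree fibral divisors on \(C_{k-1}\) only in \emph{all but finitely many} fibers. Your genericity of the \(t_i\) handles the line fibers, but the divisor \(D\) in the lemma may well be supported in one of the finitely many exceptional non-line fibers of \(C_{k-1}\) --- indeed this is precisely the interesting case, since that is where small-span fibral divisors live. In your non-line-fiber case you write that \(D' = D - q\) ``attains span \(p-1\) by the remarks above,'' but the remarks above only yield this for fibers outside the exceptional locus; over an exceptional fiber, the degree-\(p\) divisor \(D'\) could a priori span \(\leq p-2\). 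That is exactly what would allow \(q\) to be in the support of \(D\), so you cannot conclude \(D \subset C_{k-1}\). The same problem recurs in your final sentence of that case (``taking any degree-\(p\) subdivisor shows span \(\geq p-1\)'').

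The paper sidesteps this by using a stronger input: the \emph{inductive hypothesis} of Theorem~\ref{rel-va-intro} for \(C_{k-1}\). Since \(e_{k-p-1} \geq 0\) is exactly the rank-\((k-1)\), parameter-\((p-1)\) hypothesis, the inductive hypothesis says \(\L|_{C_{k-1}}\) is (honestly, not birationally) relatively \((p-1)\)-very ample, so \emph{every} degree-\(p\) fibral divisor on \(C_{k-1}\) --- including those in exceptional fibers --- spans exactly \(\pp^{p-1}\) in \(\pp(F_{k-1}/F_-)\). This is the missing ingredient your proof needs to close the non-line-fiber case. Once you replace your birational input by this inductive relative \((p-1)\)-very ampleness, the rest of your casework (including the \(a=2\) line-fiber analysis) goes through.
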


\begin{proof}
Because $D$ contains at most two points on a line by Lemma~\ref{Lem:2PointsOnALine}, the span of $D$ is equal to the span of $D' + x_0$ where \(x_0\) is some point on \(X\) and $D'$ is a degree $p$ divisor on $C_{k-1}$ (if two points limit to a line $L_i$, replacing one of them with $p_i = C_{k-1} \cap L_i$ does not change the span and brings our divisor to this form). 
By the inductive hypothesis, the span of $D'$ in  $\pp(F_{k-1}/F_-)$ has dimension $p-1$. Thus, $x_0$ must already lie in the span of $D'$. Hence, $D$ is supported on $C_{k-1}$. 

By Theorem \ref{bi-rel-va-intro}, a general fibral divisor of degree \(p+1\) supported on \(C_{k-1}\) has span in \(\pp (F_{k-1}/F_-)\) of dimension \(p\).  Because the lines are attached in general fibers, $D$ lies in a fiber not containing a line.
\end{proof}

\begin{proof}[Proof of Theorem~\ref{rel-va-intro}]

Let $D$ be the limit on $X$ of a fibral divisor of degree $p+1$ on $C$ whose span in $\pp \left( E^\vee / F_{-} \right)$ has dimension $p-1$ or less.  By Lemma~\ref{Lem:FibralSupport}, we may assume that $D$ is contained in a fiber of $C_{k-1}$ and has a span of dimension $p-1$. By Theorem \ref{bi-rel-va-intro}, there are finitely many such divisors, so it suffices to show that for each one, we can find a deformation of $X$ so that any corresponding deformation of $D$ has image in $\pp(E^\vee/F_-)$ of dimension $p$.

Let
$V \subseteq H^0(N_X) $ be the subspace of sections that preserve the property that some corresponding deformation of \(D\) lives in a linear space of dimension $p-1$ in $\pp(E^\vee/F_-)$.
We shall show that the image of $V$ under
\begin{equation} \label{123} H^0(N_{X}) \to H^0(N_{\Sigma_{k-1}}|_{C_{k-1}}(\Gamma)) \to N_{\Sigma_{k-1}}|_D 
\end{equation}
is contained in a proper linear subspace $\Delta \subset N_{\Sigma_{k-1}}|_D$. 
Since the first map is surjective (Lemma \ref{Cor:Surj}), it will then suffice to show that there is a section of $N_{\Sigma_{k-1}}|_{C_{k-1}}(\Gamma)$ whose image under the second map does not lie in $\Delta$ (for which we will use Lemma \ref{Lem:Eval}).

Because $D$ does not meet $\pp F_-$ (by Theorem \ref{thm:main_inductive}\eqref{main_inductive:subscroll}), we know $N_{\Sigma_{k-1}}|_D \simeq  N_{\pp(F_{k-1}/F_-)/\pp(E^\vee/F_-)}|_{D}$. 
Let $\Lambda \subset \pp(E^\vee/F_-)$ be the span of the image of $D$ under projection from $\pp F_-$.
If a deformation of $D$ continues to have span in $\pp(E^\vee/F_-)$ of dimension $p-1$, then there would exist a deformation of $\Lambda$ that contains it. Therefore 
the image of $V$ under the composition \eqref{123} is necessarily contained in the image
$\Delta$ of
\begin{equation} \label{s} H^0(N_{\pp(F_{k-1}/F_-)/\pp(E^\vee/F_-)}|_{\Lambda}) \rightarrow N_{\pp(F_{k-1}/F_-)/\pp(E^\vee/F_-)}|_{D} = N_{\Sigma_{k-1}}|_D.
\end{equation}

Since $\Lambda$ is contained in a fiber and $\pp(F_{k-1}/F_-) \subset \pp(E^\vee/F_-)$ is a hyperplane in each fiber, we have $N_{\pp(F_{k-1}/F_-)/\pp(E^\vee/F_-)}|_{\Lambda} \simeq \O_{\Lambda}(1)$. Hence, the source of \eqref{s} has dimension
\[h^0(N_{\pp(F_{k-1}/F_-)/\pp(E^\vee/F_-)}|_{\Lambda}) = \dim \Lambda + 1 = p.\]
On the other hand, $D$ has degree $p+1$, so the target of \eqref{s} has dimension $p+1$. Hence, the image $\Delta \subset N_{\Sigma_{k-1}}|_D$ of \eqref{s} is a proper subspace. 
Finally,
by Lemma \ref{Lem:Eval}, since $\Gamma$ is general and contains $m \geq h^1(N_{\Sigma_{k-1}}|_{C_{k-1}})+1$ points, there exists a section in
$H^0(N_{\Sigma_{k-1}}|_{C_{k-1}}(\Gamma))$ that misses
$\Delta \subset N_{\Sigma_{k-1}}|_D$.
\end{proof}

\subsection{Necessary and sufficient conditions for relative very ampleness} \label{stronger}

The following theorem is a stronger version of Theorem \ref{rel-va-intro} from the introduction.
\begin{thm} \label{rel-va} 
Assume $p \leq k - 1$.
A general line bundle in  \(\we\) is relatively \(p\)-very ample if and only if either
  \begin{enumerate}
\item\label{rel-va-1} $e_{k-p-1} \geq 0$, 
\item\label{rel-va-2} $p = 0$ and $e_k \geq 0$ and $e_{k-1} - e_1 \leq 1$ and $\rho'(g,\vec{e}) = 0$, 
\item\label{rel-va-3} $p = k-2$ and $e_{2} \geq 0$ and $e_k - e_2 \leq 1$ and $\rho'(g,\vec{e}) = 0$, 
\item\label{rel-va-4} $p = k-1$ and $e_1 \geq 0$, or
\item\label{rel-va-5} $g = 0$ and $e_{k-p} \geq 0$.
\end{enumerate}
\end{thm}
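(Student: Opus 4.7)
The strategy is to establish sufficiency for the new cases (2)--(5) and then prove necessity (case (1) is Theorem~\ref{rel-va-intro}). \emph{Sufficiency.} Case (4), $p = k - 1$ with $e_1 \geq 0$, is immediate: for every fiber $F$, $f_*(\L(-F)) \simeq \O(\vec{e} - 1)$ has only nonnegative degrees, so $h^0(\L(-F)) = \sum e_i = h^0(\L) - k$. Case (2), with $p = 0$, reduces to Theorem~\ref{Thm:bpf}: the hypotheses $\rho'(g,\vec{e}) = 0$, $e_k \geq 0$, $e_{k-1} - e_1 \leq 1$, combined with failure of (1) (so $e_{k-1} < 0$), force $\vec{e}$ to be exactly the generic splitting type of $f^*\O_{\pp^1}(n)$ for an appropriate $n \geq 0$, and all such pullbacks are basepoint free. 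Case (5), $g = 0$, reduces to a direct computation on $C \simeq \pp^1$: the condition $\rho'(0,\vec{e}) \geq 0$ forces $u(\vec{e}) = 0$, making $\vec{e}$ balanced, and then $e_{k-p} \geq 0$ translates to $\deg \L \geq p$, exactly the condition for $\O_{\pp^1}(\deg \L)$ to be relatively $p$-very ample. Case (3), $p = k - 2$, is the Serre-dual counterpart of case (2): relative duality gives $f_*(\omega_f \otimes \L^\vee) \simeq \O(-e_k, -e_{k-1}, \ldots, -e_1)$, and applying Serre duality to $0 \to \L(-D) \to \L \to \L|_D \to 0$ translates relative $(k-2)$-very ampleness of $\L$ into a basepoint-freeness statement for a suitable twist of $\omega_f \otimes \L^\vee$, to which Theorem~\ref{Thm:bpf} applies after checking that the hypotheses of (3) pull back to those of (2) for the dual.

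\emph{Necessity.} Suppose none of (1)--(5) holds. If $e_{k-p} < 0$, Theorem~\ref{bi-rel-va-intro} already gives that the general $(f,\L)$ fails birational relative $p$-very ampleness, and we are done. Otherwise $e_{k-p} \geq 0 > e_{k-p-1}$, so $\vec{e}$ has exactly $p+1$ nonnegative parts and the nonnegative scroll $\pp(E^\vee/F_-)$ has fiber dimension exactly $p$---just enough for a generic fibral divisor of degree $p+1$ to span its fiber, but leaving room for degenerate ones. The plan is to construct a bad fibral divisor $D$ of degree $p+1$ on the degenerate curve $X$ of Section~\ref{sec:our_degeneration}, one whose image spans only a $(p-1)$-plane in the fiber of $\pp(E^\vee/F_-)$, and to show by a normal bundle argument that this degeneracy persists under smoothing. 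Each failure of (2), (3), (4), (5) removes a rigidity constraint: failure of (2) or (3) gives $\rho'(g,\vec{e}) > 0$, so $\we$ has positive dimension and $\L$ is not rigidly pinned down as a pullback (or its Serre dual); failure of (4) gives $p \leq k - 2$, so $D$ does not fill its fiber; failure of (5) gives $g \geq 1$, ruling out the rational-scroll rigidity. Together these flexibilities ensure that the bad divisor cannot be eliminated by any auxiliary rigidity.

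The main obstacle is the borderline regime $e_{k-p-1} = -1$, $e_{k-p} = 0$, where all four rigidity mechanisms of (2)--(5) must be ruled out simultaneously and an explicit bad fibral divisor produced on $X$. The argument mirrors the proof of Theorem~\ref{rel-va-intro} in reverse: rather than perturbing to spread a fibral divisor across the scroll, one perturbs to keep it collapsed. The technical input is an analogue of Lemma~\ref{Lem:Eval} applied to this collapsed configuration, followed by the surjectivity statements of Section~\ref{Sec:Subscroll} to propagate the obstruction from $X$ to its generic smoothing.
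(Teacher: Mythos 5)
The paper's proof is fundamentally different from yours: the paper performs a single intersection-theoretic computation. It realizes the locus of "dependent fibral divisors" of degree $p+1$ as a determinantal locus on the $(p+1)$-fold fiber product of $C$ over $\pp^1$ and computes its degree $N$ via Chern classes, showing that $N = 0$ holds exactly in cases \eqref{rel-va-2}--\eqref{rel-va-5}. This one calculation simultaneously proves both directions: if $N = 0$ there are no bad divisors (sufficiency beyond case \eqref{rel-va-1}); if $N > 0$ there is at least one (necessity). Your proposed proof instead gives separate ad hoc sufficiency arguments for each of \eqref{rel-va-2}--\eqref{rel-va-5} and a degeneration-based necessity argument, neither of which matches the paper.

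There are concrete gaps in your proposal. For sufficiency of case \eqref{rel-va-2} you invoke Theorem~\ref{Thm:bpf}, but the paper \emph{derives} Theorem~\ref{Thm:bpf} from Theorem~\ref{rel-va}, so this is circular unless you reproduce the direct argument (computing $h^0(\L \otimes f^*\O_{\pp^1}(-e_k)) = 1$ and $\deg(\L \otimes f^*\O_{\pp^1}(-e_k)) = 0$ to deduce $\L \cong f^*\O_{\pp^1}(e_k)$). For case \eqref{rel-va-3} the Serre duality reduction is plausible in spirit but not carried out: you would need to check that relative $(k-2)$-very ampleness of $\L$ actually corresponds to basepoint freeness of $\omega_f \otimes \L^{-1}$, and this is delicate because a fibral divisor of degree $k-1$ need not be of the form $f^{-1}(t) - x$ for a single point $x$ (it can have multiplicities exceeding those of the fiber), so the bijection "degree-$(k-1)$ fibral divisors $\leftrightarrow$ points" breaks down. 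Also, when $e_2 = 0$ the dual splitting type lands in case \eqref{rel-va-1} rather than case \eqref{rel-va-2}, so the reduction must be phrased as "falls into one of the already-established cases."

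The more serious problem is necessity. Your paragraph is a description of a strategy rather than a proof: you assert that a bad fibral divisor on the degenerate curve $X$ "persists under smoothing" by "an analogue of Lemma~\ref{Lem:Eval} applied to this collapsed configuration," but no such analogue is stated, and the heuristic that each failed condition "removes a rigidity constraint" is not a mathematical argument. In fact, proving necessity by degeneration is the wrong direction for this kind of statement: showing that a degenerate configuration survives a \emph{general} deformation requires an obstruction that applies to all first-order deformations, which is exactly what you do not set up. The paper's computation of $N > 0$ sidesteps this entirely, since positivity of a Chern number is preserved under deformation, and the expected-codimension check needed to turn it into an actual count of divisors is supplied by Theorem~\ref{bi-rel-va-intro}.
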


By Theorem \ref{bi-rel-va-intro}, the condition $e_{k-p} \geq 0$ is necessary, and by 
Theorem~\ref{rel-va-intro}, the condition $e_{k-p-1} \geq 0$ is sufficient.
We therefore suppose $e_{k-p-1} < 0 \leq e_{k-p}$.

By Theorem \ref{bi-rel-va-intro}, there are finitely many fibral divisors of degree $p+1$ that fail to be independent in their fiber of $\pp(E^\vee/F_-)$ over $\pp^1$. We shall call these \defi{dependent fibral divisors}.
Below, we use intersection theory to compute the number $N$ of dependent fibral divisors of degree $p+1$, counted with multiplicity. In particular, when $N = 0$, there are no dependent fibral divisors of degree $p+1$, so a general $\L$ will be relatively $p$-very ample (instead of just birationally relatively $p$-very ample). In other words, in these cases, $\L$ is more ample than expected because of a numerical coincidence.

On the other hand, of course, if $N \neq 0$, then $\L$ cannot be relatively $p$-very ample. This will prove the only if direction in Theorem \ref{rel-va}.

\subsubsection{Number of dependent fibral divisors via intersection theory}
To perform our intersection theory calculation, we realize the locus of dependent fibral divisors of degree $p+1$ as a degeneracy locus where a map of vector bundles drops rank. Let $A$ be the closure of the complement of the diagonals in the $(p+1)$-fold fiber product $C \times_{\pp^1} \times \cdots \times_{\pp^1} C$. Over a point $t \in \pp^1$ that is not a branch point of $f$,
the fiber of $h\colon A \to \pp^1$ consists of ordered tuples $(a_1, \ldots, a_{p+1})$ of $p+1$ distinct points in the fiber of $f$ over $t$.
The map $h\colon A \to \pp^1$ is therefore finite of degree $k!/(k - p-1)!$. 

If $t \in \pp^1$ is a branch point of $f$, the fiber of $h\colon A \to \pp^1$ over $t$ consists of tuples $(a_1, \ldots, a_{p+1})$ where $a_i = a_j$ is a ramification point, for some pair $(i,j)$. Let $Z \subset A$ be the locus of points $(a_1, \ldots, a_{p+1}) \in A$ where the $a_i$ fail to be distinct. By Riemann--Hurwitz, the degree of the ramification divisor of $f$ is $2g -2 + 2k$. It follows that
\[\deg Z = (2g - 2 + 2k)\cdot {p+1 \choose 2} \cdot \frac{(k-2)!}{(k - p - 1)!}. \]

Let $\pi_j\colon A \to C$ be the projection onto the $j$th factor. Note that $\deg \pi_j = (k-1)!/(k-p-1)!$.
For each $j$ there is an evaluation map
\begin{equation} \label{evh} h^*E = \pi_j^*f^*(f_*\L) \to \pi_j^*\L.
\end{equation}
The fiber of the kernel of \eqref{evh} over a point $(a_1, \ldots, a_{p+1}) \in A$ is the space of linear forms on the fiber of $\pp E^\vee$ that vanish on $a_j \in C \subset \pp E^\vee$.
Taking the sum of \eqref{evh} over all $j$, we obtain a map of vector bundles
\[h^*E \rightarrow \bigoplus_{j=1}^{p+1} \pi_j^*\L \]
on $A$ whose kernel is the bundle of linear forms on the fibers of $\pp E^\vee$ that vanish on $a_1, \ldots, a_{p+1}$.

Recall that we assume $E$ has exactly $p+1$ non-negative parts, which form a canonical rank $p+1$ subbundle. (This subbundle of $E$ is the same as $(E^\vee/F_-)^\vee$, which is the space of linear forms on our projective bundle $\pp(E^\vee/F_-)$.)
 The composition
\[\phi\colon h^*\left(\bigoplus_{i \geq k-p} \O(e_i) \right) \rightarrow h^*E \to \bigoplus_{j=1}^{p+1}\pi_j^*\L\]
drops rank when there exists a linear form on $\pp(E^\vee/F_-)$ that vanishes on all of $a_1, \ldots, a_{p+1}$. In other words, $\det \phi$ vanishes when $a_1, \ldots, a_{p+1}$ are linearly dependent in $\pp(E^\vee/F_-)$. If $a_i = a_j$ (so $(a_1, \ldots, a_{p+1}) \in Z$), then $a_1, \ldots, a_{p+1}$ are automatically dependent. Hence, $Z \subset V(\det \phi)$. Meanwhile, away from $Z$, the determinant $\det \phi$ vanishes precisely when 
$a_1 + \ldots + a_{p+1}$ is a dependent fibral divisor. By Theorem \ref{bi-rel-va-intro}, there are finitely many dependent fibral divisors, so $\det \phi$ vanishes in the expected codimension.
Note also that Theorem~\ref{rel-va-intro} ensures a general $(f, \L) \in \W^{\vec{e}}_{\text{BN}}$ is relatively $(p+1)$-very ample, so there are no dependent fibral divisors of degree $p$.
Thus, the degree of the scheme of dependent fibral divisors of degree $p+1$ is
\begin{align}
N &= \frac{1}{(p+1)!}(\deg [V(\det \phi)] - \deg Z) \\
& = \frac{1}{(p+1)!} \left( \sum_{j=1}^{p+1} \deg \pi_j^*\L - \sum_{i = k-p}^k \deg h^*\O(e_i) - \deg Z \right) \notag \\
&=\frac{1}{(p+1)!} \left( (p+1) (\deg \pi_i)(\deg \L) - (\deg h)(e_{k-p} + \ldots + e_k) - \deg Z \right) \notag \\
& = {k-1 \choose p} (e_1 + \ldots + e_k + g + k - 1) - {k \choose p+1}(e_{k-p} + \ldots + e_k) - (g - 1 + k){k-2 \choose p-1}. \label{hi}
\end{align}

\subsubsection{The case of no dependent fibral divisors}
The edge cases when $\L$ is more ample than expected occur when $N =0$. Multiplying \eqref{hi} by $\frac{(p+1)!(k - 1 - p)!}{(k-2)!}$, the condition $N = 0$ is equivalent to
\[
(p+1)(k-1)(e_1 + \ldots + e_k ) + (p+1)(k-1 - p)(g + k - 1) = k(k-1)(e_{k-p} + \ldots + e_k),
\]
or equivalently,
\begin{align*}
(p+1)(k-1 - p)(g + k - 1) &= (k-1)\left((k - p - 1)(e_{k-p} + \ldots + e_k) - (p+1)(e_1 + \ldots + e_{k-p-1})\right) \\
&=(k-1)\sum_{i = 1}^{k-p-1} \left(\sum_{j=k-p}^{k} e_j - e_i\right).
\end{align*}
This in turn means
\begin{equation} \label{meq}
(p+1)(k-1-p)g = (k-1)\sum_{i = 1}^{k-p-1} \left(\sum_{i=k-p}^{k} e_j - e_i - 1\right) \leq (k - 1) u(\vec{e}) \leq (k - 1)g.
\end{equation}
If $g = 0$, then all inequalities in \eqref{meq} are equalities and hence $N = 0$. Therefore, we have relative $p$-very ampleness whenever there are $p+1$ nonnegative parts, which is Theorem \ref{rel-va}\eqref{rel-va-5}.

Assume for the remainder that $g \neq 0$.
Dividing \eqref{meq} by $g$, we see that
\begin{equation} \label{last}
(p+1)(k-1-p) \leq k - 1. 
\end{equation}
There are only three possible values for $p$ where this inequality holds, which translate into Theorem~\ref{rel-va}\eqref{rel-va-2}--\eqref{rel-va-4}.
\begin{itemize}
\item $p = 0$. Since equality holds in \eqref{last}, all inequalities in \eqref{meq} are actually equalities. Hence, $g = u(\vec{e})$, equivalently $\rho'(g,\vec{e}) =0$. Furthermore, $\sum_{i = 1}^{k-1} e_k - e_i - 1= u(\vec{e})$, which implies that the other parts are balanced, i.e., $e_{k-1} - e_1 \leq 1$.
\item $p = k-2$. Again, equality holds in \eqref{last}, so all inequalities in \eqref{meq} are actually equalities. This gives $g = u(\vec{e})$, equivalently $\rho'(g,\vec{e}) = 0$. Furthermore, $\sum_{i=2}^{k} e_j - e_1 - 1 = u(\vec{e})$, which implies that the other parts are balanced, i.e., $e_k - e_2 \leq 1$.
\item $p=k-1$. Our assumption that $\vec{e}$ has exactly $p+1$ nonnegative parts means $e_1 \geq 0$.
\end{itemize}

\section{Very ampleness: map to projective space\label{Sec:VA}}
In this section, we use the complete characterization in Theorem~\ref{rel-va} of relative \(p\)-very ampleness to prove Theorem~\ref{Thm:bpf} and a refined version of Theorem \ref{Thm:VA-intro} giving necessary and sufficient conditions for very ampleness.

\begin{proof}[{Proof of Theorem~\ref{Thm:bpf}}]
When \(p = 0\),  relative \(0\)-very ampleness is equivalent to \(0\)-very ampleness.  It therefore suffices to show that conditions \eqref{rel-va-1}--\eqref{rel-va-5} in Theorem~\ref{rel-va} are equivalent to Theorem~\ref{Thm:bpf}.  Conditions~\eqref{rel-va-1}--\eqref{rel-va-5} in Theorem~\ref{rel-va} when \(p=0\) simplify to
\begin{enumerate}[(a)]
\item \label{a} $e_{k-1} \geq 0$, or
\item \label{b} $e_k \geq 0$ and $e_{k-1} - e_1 \leq 1$ and $\rho'(g, \vec{e}) = 0$.
\end{enumerate}
(To show that Theorem~\ref{rel-va}\eqref{rel-va-5}  reduces to \eqref{b} above, note that the pushforward of a line bundle under any map of genus \(0\) curves is balanced.)

Finally, suppose that \eqref{b} holds and \eqref{a} does not, so $e_{k-1} < 0$.
In this case, we have
\[h^0(\L\otimes f^*\O_{\pp^1}(-e_k)) = 1 \qquad \text{and} \qquad h^1(\L \otimes f^*\O_{\pp^1}(-e_k)) = \sum_{i < k} e_k - e_i - 1 = u(\vec{e}) = g. \]
Hence, $\L \otimes f^*\O_{\pp^1}(-e_k) \cong \O_C$, or equivalently, $\L \cong f^*\O_{\pp^1}(e_k)$.
\end{proof}

Ordinary \(p\)-very ampleness
implies relative \(p\)-very ampleness, but the converse implication
does not hold for $p \geq 1$. Nevertheless, we have the following inductive tool:

\begin{lem} \label{lem:it}
For a general $(f, \L) \in \we$, the line bundle $\L$ is \(p\)-very ample
(resp.\ birationally \(p\)-very ample)
if both:
\begin{itemize}
\item It is relatively \(p\)-very ample (resp.\ birationally relatively \(p\)-very ample), and
\item For a general \((f',\L') \in \W^{\vec{e}'}_{\mathrm{BN}}\)
where
\(\vec{e}' = (e_1 - 1, e_2 - 1, \ldots, e_k - 1)\), the line bundle $\L'$ is \((p - 1)\)-very ample
(resp.\ birationally \((p - 1)\)-very ample).
\end{itemize}
\end{lem}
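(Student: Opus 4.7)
The plan is to handle any effective divisor $D$ of degree $p+1$ on $C$ by splitting it into its fibral and non-fibral parts, reducing the fibral case to hypothesis (i) and the non-fibral case to hypothesis (ii) applied to a ``twist down'' by a fiber. Concretely, if $D$ is supported in a single fiber of $f$, then (i) directly gives $h^0(\L(-D)) = h^0(\L) - (p+1)$. Otherwise, choose any $x \in \Supp(D)$, set $F := f^{-1}(f(x))$, and write $D = D_F + D'$ where $D_F$ is the part of $D$ supported in $F$ (so $1 \leq \deg D_F \leq p$) and $D'$ is the remainder, supported away from $F$ and of degree $(p+1) - \deg D_F \geq 1$.

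The two-step filtration $\L(-D) \subset \L(-D_F) \subset \L$ then yields the additivity
\[
\rk\bigl(H^0(\L) \to \L|_D\bigr) = \rk\bigl(H^0(\L) \to \L|_{D_F}\bigr) + \rk\bigl(H^0(\L(-D_F)) \to \L|_{D'}\bigr),
\]
so it suffices to show that each summand equals the corresponding degree. For the first summand, since $p \leq k-1$, the divisor $D_F$ may be enlarged within $F$ to a fibral divisor $\widetilde D_F$ of degree exactly $p+1$; applying hypothesis (i) to $\widetilde D_F$ and using subadditivity of ``imposing conditions'' forces $D_F$ itself to impose $\deg D_F$ independent conditions on $\L$. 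For the second summand, set $\L' := \L \otimes f^*\O_{\pp^1}(-1) = \L(-F)$, which has splitting type $\vec{e}'$; hypothesis (ii) says $\L'$ is $(p-1)$-very ample, and since $\deg D' \leq p$, the divisor $D'$ imposes $\deg D'$ independent conditions on $\L'$. As the effective divisor $F - D_F$ is disjoint from $D'$, the inclusion $\L' \hookrightarrow \L'(F - D_F) = \L(-D_F)$ identifies $\L'|_{D'}$ with $\L(-D_F)|_{D'}$ compatibly with restriction of global sections, promoting the surjectivity from $\L'$ to $\L(-D_F)$. Summing gives rank $p+1$, as desired.

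The birational variant follows from the same case analysis applied to a general $D$: as long as the point $x \in \Supp(D)$ can be chosen so that neither the enlargement $\widetilde D_F$ (for (i)) nor the residual $D'$ (for (ii)) lands in its respective exceptional locus, the argument goes through unchanged. The main technical point I expect to require care is precisely this genericity statement in the birational setting: one must check that the preimages of the ``bad'' loci from (i) and (ii) under the fiber-splitting construction form a proper closed subset of $\Sym^{p+1}(C)$, rather than together covering a positive-dimensional family of divisors. The rest — the additivity identity, the enlargement trick using $p \leq k-1$, and the disjointness of $F - D_F$ from $D'$ — should all be routine.
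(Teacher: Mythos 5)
Your approach is the same as the paper's: split a degree-$(p+1)$ divisor $D$ into its part on a fiber $F$ plus a residual, handle the fibral piece via relative $p$-very ampleness, and handle the residual via $(p-1)$-very ampleness of $\L' = \L \otimes f^*\O_{\pp^1}(-1)$. The additivity of ranks, the enlargement trick for the fibral piece, and the promotion from $\L'$ to $\L(-D_F)$ are all the right ideas.

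There is, however, a gap in the promotion step. You define $D_F$ as the full part of $D$ supported in $F$ (with its full multiplicities), and then assert that $F - D_F$ is effective and disjoint from $D'$. This fails whenever $D$ has multiplicity at some point $y \in F$ exceeding the multiplicity of the fiber $F$ at $y$ --- e.g., $p = 2$ and $D = 2x + z$ with $x$ an unramified point of $f$ and $z$ in a different fiber, so $D_F = 2x \not\leq F$. In that case $F - D_F$ is not effective, there is no inclusion $\L' = \L(-F) \hookrightarrow \L(-D_F)$, and the step ``promoting the surjectivity from $\L'$ to $\L(-D_F)$'' has no content. (For $p = 1$ the bad situation cannot occur, but the lemma is stated and used for all $p$.) The fix, which is what the paper's phrase ``$D \cap F$'' means, is to split off the \emph{minimum} $D \wedge F$ (where $\operatorname{mult}_y(D\wedge F) = \min(\operatorname{mult}_y D, \operatorname{mult}_y F)$) rather than the full part $D_F$. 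Then $F - (D\wedge F)$ is automatically effective, and at every point at most one of $D - (D\wedge F)$ and $F - (D\wedge F)$ is nonzero, so they are disjoint. The residual $D - (D\wedge F)$ may still meet $F$, but that is harmless: what the promotion requires is only that the residual be disjoint from $F - (D\wedge F)$, so that $\L'|_{D - D\wedge F} \cong \L(-(D\wedge F))|_{D - D\wedge F}$ compatibly with restriction. With this replacement your first summand still equals $\deg(D\wedge F)$ by the same enlargement argument (note $\deg(D\wedge F) \leq p$ since $D \not\leq F$ when $D$ is not fibral), and the rest goes through as you wrote it.
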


\begin{proof}
Take any effective divisor \(D\) of degree \(p + 1\),
and suppose \(D\) nontrivially intersects a fiber \(F\) of the map
\(C \to \pp^1\).
Because \(\L\) is relatively \(p\)-very ample,
\(D \cap F\) is in linear general position.
It thus suffices to show that the projection from \(F\)
of \(D - (D \cap F)\) is in linear general position.
This follows from the \((p - 1)\)-very ampleness
of \(\L' = \L \otimes f^* \O_{\pp^1}(-1)\).
\end{proof}

\begin{rem} \label{rem:dumb}
As an immediate consequence of Theorem~\ref{Thm:bpf} and Lemma~\ref{lem:it},
we see that
for a general $(f, \L) \in \we$, the line bundle $\L$ is $p$-very ample if:
\begin{align*}
e_{k} & \geq p \\
e_{k-1} &\geq p \\
e_{k-2} &\geq p-1 \\
\vdots  \\
e_{k-p-1} &\geq 0.
\end{align*}
Similarly, for a general \((f, \L) \in \we\), the line bundle $\L$ is birationally \(p\)-very ample if:
\begin{align*}
e_k &\geq p \\
e_{k-1} &\geq p-1 \\
\vdots  \\
e_{k-p} &\geq 0.
\end{align*}
\end{rem}

The result in Remark~\ref{rem:dumb} is likely far from sharp.
Farkas proved \cite{farkas} that a general line bundle in the classical Brill--Noether locus \(\mathcal{W}^r_d\) is \(p\)-very ample if \(r \geq 2p+1\).  We have already shown that a general line bundle in \(\we\) is \textit{relatively} \(p\)-very ample if \(e_{k-p-1}\geq 0\), and so the natural conjecture would be as follows.

\begin{conj}\label{p-very-conj}
A general line bundle in $\we$ is  \(p\)-very ample if 
\[e_{k-p-1} \geq 0 \quad \text{and} \quad r = h^0(\O(\vec{e})) - 1 \geq 2p+1.\]
\end{conj}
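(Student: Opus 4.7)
The natural first attempt is induction on $p$ using Lemma~\ref{lem:it}. The base case $p=0$ is Theorem~\ref{Thm:bpf}: under the hypotheses $e_{k-1} \geq 0$ and $r \geq 1$, one is in the basepoint-free case of that theorem (the degenerate case $\L \simeq f^* \O_{\pp^1}(n)$ is excluded by $r \geq 1$ together with the splitting-type hypotheses). For the inductive step, Theorem~\ref{rel-va-intro} already gives relative $p$-very ampleness, so by Lemma~\ref{lem:it} it suffices to prove $(p-1)$-very ampleness for a general line bundle in $\W^{\vec{e}\,'}_{\mathrm{BN}}$ with $\vec{e}\,' = \vec{e} - (1,\dots,1)$.

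However, this naive induction does not close in two ways. The hypothesis on nonnegative parts becomes $e'_{k-p} = e_{k-p}-1 \geq 0$, i.e.\ $e_{k-p} \geq 1$, which is strictly stronger than $e_{k-p-1} \geq 0$. The rank condition is even worse: $r' = r - \#\{i : e_i \geq 0\} \leq r - (p+2)$, so $r \geq 2p+1$ gives only $r' \geq p-1$, far short of $2(p-1)+1$. Thus to rescue the induction one needs a strictly more flexible reduction than $\vec{e} \mapsto \vec{e} - (1,\dots,1)$. The first plan is to prove a refined version of Lemma~\ref{lem:it} in which one twists by a divisor supported only on the positive coordinates (e.g.\ projecting $D$ from a point and comparing splitting types), thereby preserving $e_{k-p-1}\geq 0$ and losing the rank more slowly.

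Failing that, the fallback is to run the embedded degeneration of Section~\ref{sec:our_degeneration} and analyze $(p+1)$-secant $(p-1)$-planes on a general deformation $C$ of $X = C_{k-1} \cup S_k \cup L_1 \cup \cdots \cup L_m$ directly. The limit on $X$ of such a divisor $D \subset C$ of degree $p+1$ is stratified by its distribution among the components: the purely fibral stratum is ruled out by Theorem~\ref{rel-va-intro}, and Lemma~\ref{Lem:2PointsOnALine} constrains how $D$ can meet any $L_i$. For each remaining non-fibral stratum, the strategy parallels the proof of Theorem~\ref{rel-va-intro}: identify the space $V \subseteq H^0(N_X)$ of sections preserving the dependency of a limit of $D$, show its image in the appropriate quotient of $N_X|_D$ lies in a proper subspace $\Delta$ (determined by a Grassmannian of $(p-1)$-planes containing $D$), and invoke Lemma~\ref{Lem:Eval} (or a multi-point generalization) to produce a section of $N_X$ escaping $\Delta$.

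The main obstacle is the dimension count for $\Delta$ when $D$ is spread across several fibers. In the relative case the universal $(p-1)$-plane containing $D$ is visibly a fiberwise hyperplane in $\pp(E^\vee/F_-)$, giving a clean rank comparison. In the non-fibral case, the analogous family of $(p-1)$-planes lives in a Grassmannian, and its expected codimension in secant varieties is controlled precisely by the quantity $r - 2p - 1$; this is exactly where Farkas's classical bound $r \geq 2p+1$ must enter, playing the role that $e_{k-p-1} \geq 0$ plays in the fibral case. Matching this codimension to $h^1(N_{\Sigma_{k-1}}|_{C_{k-1}})$ through the evaluation statement appears to demand either a Grassmannian-valued refinement of Lemma~\ref{Lem:Eval} or a Koszul-cohomological input comparing the Hurwitz setting to Farkas's classical argument; this is the principal difficulty we anticipate, and presumably why the statement remains conjectural.
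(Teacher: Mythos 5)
You have correctly identified that the statement you were asked to prove is labeled a \emph{conjecture} in the paper: the authors do not prove it except in the case $p = 1$, which is Theorem~\ref{Thm:VA-intro}. Your diagnosis of why the obvious approaches fail is accurate and matches the authors' own commentary. The naive induction via Lemma~\ref{lem:it} only recovers the staircase of inequalities in Remark~\ref{rem:dumb}, which the paper explicitly describes as ``likely far from sharp,'' and the mechanism you identify---that $r$ drops by $\#\{i : e_i \geq 0\} \geq p+2$ under $\vec{e} \mapsto \vec{e} - (1, \dots, 1)$, so $r \geq 2p+1$ degrades to $r' \leq p - 1 < 2(p-1)+1$---is exactly right.

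Your proposed fallback (degenerate to $X$, stratify limits of $D$ by distribution over components, use Theorem~\ref{rel-va-intro} and Lemma~\ref{Lem:2PointsOnALine} to eliminate some strata, and then seek a Grassmannian-valued generalization of Lemma~\ref{Lem:Eval} to handle the rest) is a reasonable outline; the paper's proof of the $p = 1$ case in Section~\ref{Sec:VA} can be read as a hands-on version of exactly this, specialized to the situation where the family of secant lines is parametrized by $\pp^{r-1}$ or lower-dimensional loci, and carried out via Lemma~\ref{lem:3parts}, the incidence correspondence $\Psi$, and the Uniform Position Principle rather than a single uniform evaluation lemma. In short, there is no paper proof to compare against; your proposal does not misrepresent the statement as solved, and it gives a faithful account of where the difficulty lies.
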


When \(p=1\), we prove this conjecture in Theorem~\ref{Thm:VA-intro}; in other words, the remainder of the paper deals with the discrepancy between what we expect by Conjecture~\ref{p-very-conj} and what is covered by Remark~\ref{rem:dumb} when \(p=1\).  Note that this discrepancy increases as \(p\) increases.

\medskip

We now give a characterization of birational very ampleness, which will feed into our proof of Theorem \ref{Thm:VA-intro}. 

\begin{lem} \label{lem:3parts}
For $(f, \L) \in \we$ general, $\L$ is birationally very ample if and only if
\begin{enumerate}
\item \label{pt1} \(e_{k-2} \geq 0\), 
\item \(e_{k-1} \geq 0\) and \(e_{k}\geq 1\), or
\item \(g=0\) and \(e_{k-1} \geq 0\).
\end{enumerate}
\end{lem}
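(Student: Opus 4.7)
I would prove the ``only if'' and ``if'' directions separately, with the latter by case analysis. For the ``only if'' direction, the key observation is that birational very ampleness implies birational relative $1$-very ampleness: the non-separated locus is finite in $\mathrm{Sym}^2 C$ while the fibral pairs form a $1$-dimensional subvariety, so the intersection is still finite. By Theorem~\ref{bi-rel-va-intro}, this forces $e_{k-1} \geq 0$. If additionally $e_{k-2} \geq 0$, we land in condition~\eqref{pt1}. Otherwise $\vec{e}$ has exactly two nonnegative parts, $\pp(E^\vee/F_-)$ is a Hirzebruch surface, and $|\L|$ factors through it to $\pp^r$ with $r = e_{k-1}+e_k+1$; then $r=1$ forces $g = 0$ (case~(3)) and $r \geq 2$ forces $e_k \geq 1$ (case~(2)).

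For the ``if'' direction, case~(3) reduces to the classical fact that $\O_{\pp^1}(d)$ is very ample for $d \geq 1$, since the balance of $\vec{e}$ forced by $u(\vec{e}) = 0$ together with $e_{k-1} \geq 0$ implies $d = \sum e_i + k - 1 \geq 1$. Case~(2) and case~(1) when $e_k \geq 1$ both follow from the birational version of Lemma~\ref{lem:it}: birational relative $1$-very ampleness holds by Theorem~\ref{bi-rel-va-intro}, and $\L' = \L \otimes f^*\O(-1)$ is birationally basepoint free because $e_k \geq 1$ implies $h^0(\L') \geq 1$, so the base locus of $\L'$ is a proper closed subset of the smooth irreducible curve $C$.

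The main obstacle is case~(1) with $e_k = 0$, equivalently $e_{k-2} = e_{k-1} = e_k = 0$, where $h^0(\L') = 0$ so Lemma~\ref{lem:it} does not directly apply. My plan is to handle this via the embedded degeneration to $X = C_{k-1} \cup S_k \cup L_1 \cup \cdots \cup L_m$ of Section~\ref{sec:our_degeneration}. On $X$, the linear series $|\L|$ contracts $S_k$ to a single point $p_0$ (since $\L|_{S_k} = \O$) and sends each $L_i$ to a line joining $p_0$ to the image of $p_i \in C_{k-1}$, so the locus of non-separated pairs on $X$ contains $S_k \times S_k$ and is therefore two-dimensional. The crux is to show this locus becomes at most one-dimensional on a general smoothing $C$. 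My plan is to combine the surjectivity result of Lemma~\ref{Cor:Surj} with the evaluation lemma Lemma~\ref{Lem:Eval} applied at pairs of points to produce, for every $2$-dimensional component of the locus of non-separated pairs on $X$, a section of $N_X$ whose first-order deformation breaks the coincidence at a general point of that component, in the spirit of the argument for Theorem~\ref{thm:main_inductive}\eqref{main_inductive:subscroll}; then a general section of $N_X$ smooths $X$ to a curve $C$ with birationally very ample $\L$.
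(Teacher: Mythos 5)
Your ``only if'' direction is correct and mirrors the paper's: by Theorem~\ref{bi-rel-va-intro}, birational relative $1$-very ampleness forces $e_{k-1}\geq 0$, and then the case split on the Hirzebruch surface $\pp(E^\vee/F_-)$ yields exactly (1), (2), (3). Your treatment of case (3), of case (2), and of case (1) with $e_k\geq 1$ is also fine and is essentially Remark~\ref{rem:dumb} together with the observation that over $g=0$ the degree of $\L$ is positive.

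The gap is in your argument for case (1) with $e_{k-2}=e_{k-1}=e_k=0$. First, the step you identify as ``the crux'' --- that the two-dimensional non-separated locus of $X$ becomes at most one-dimensional on a general smoothing $C$ --- is automatic and needs no deformation theory: for any non-constant morphism $\phi$ from an irreducible curve $C$, the locus $C\times_{\phi(C)}C$ has dimension exactly one. What actually requires proof is that this one-dimensional locus has no component besides the diagonal, i.e.\ that $\phi|_C$ has degree one onto its image. Second, the dimension count for the proposed ``break the coincidence at a general point'' strategy does not close: separating a fixed degree-two divisor on $S_k$ is a codimension-one condition on sections of $N_X|_{S_k}$, whereas the family of such divisors on $S_k\times S_k$ is two-dimensional, so the union of the bad subspaces may be everything. (This is precisely why the corresponding argument in Section~\ref{Sec:VA} for \emph{very} ampleness needs the filtration \eqref{filtN} with a rank-three positive quotient, forcing a codimension-three condition, and separately treats $m\leq 2$.) The paper instead closes this case with a short degree argument: since the $m\geq 1$ lines $L_i$ are embedded with distinct images (distinct from $\phi(C_{k-1})$ and from the point $\phi(S_k)$), the pushed-forward cycle $\phi_*[X]$ has components of multiplicity one; as $\phi_*[C]$ specializes to $\phi_*[X]$ and equals $\delta\cdot[\phi(C)]$ where $\delta$ is the degree of $\phi|_C$ onto its image, $\delta$ divides every multiplicity of $\phi_*[X]$ and hence $\delta=1$. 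This is both shorter and avoids the dimension problem your sketch runs into.
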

\begin{proof}
By Theorem~\ref{bi-rel-va-intro}, it is necessary that \(e_{k-1} \geq 0\).  If in addition \(e_{k} \geq 1\), then  a generic line bundle in \(\W^{\vec{e}}_{\text{BN}}\) is birationally very ample by Remark \ref{rem:dumb}. On the other hand, if \(e_{k-1} = e_k = 0\) and \(e_{k-2} < 0\), then \(|\mathcal{L}|\) maps \(C\) to \( \pp^1\), and so \(\L\) is birationally very ample if and only if \(g=0\).

It remains to consider the case \(e_{k-2} \geq 0\).  We will use our degeneration introduced in Section~\ref{sec:our_degeneration}.
The map $X \to \pp^r$ sends $C_{k-1}$ into a proper linear space (with non-zero degree) and sends $S_k$ to a rational normal curve in a complementary linear space (possibly contracting it to a point if \(e_k = 0\)). Since the lines $L_i$ are attached at general points of $C_{k-1}$, their images in $\pp^r$ are distinct. In particular, the map $X \to \pp^r$ is birationally very ample along the lines $L_i$.
\end{proof}

\subsection{Proof of Theorem \ref{Thm:VA-intro}} \label{mc}

We must prove very ampleness in the following two cases:
\begin{enumerate}
\item When \(\vec{e}\) has at least four nonnegative parts, none of which are positive.
\item When \(\vec{e}\) has at least three nonnegative parts, at least one of which is positive.
\end{enumerate}

\subsubsection{At least four parts of degree $0$ and no positive parts} \label{edge04}
Here, we consider splitting types with $e_k = e_{k-1} = e_{k-2}= e_{k-3} = 0$. 
We argue by degeneration to the reducible curve $X$ constructed in Section \ref{sec:our_degeneration}.
By our hypothesis on $\vec{e}$, we know $\O_{\pp E^\vee}(1)|_{C_{k-1}}$ is a line bundle whose pushforward has splitting type with at least three parts of degree $0$ (and no positive parts).
Thus, by Lemma~\ref{lem:3parts}\eqref{pt1}, we have that
 $\O_{\pp E^\vee}(1)|_{C_{k-1}}$  is birationally very ample on $C_{k-1}$. 
 The complete linear system of $\O_{\pp E^\vee}(1)$ maps $X$ to $\pp^r$ with $r \geq 3$.
This map contracts $S_k$ to a point and sends
 $C_{k-1}$ birationally onto its image in a complementary hyperplane $\alpha\colon C_{k-1} \to \pp^{r-1} \subset \pp^r$.

We will show that for a deformation $C$ of $X$, every degree \(2\) effective divisor $D$ on $C$ embeds into $\pp^r$. We have two cases to consider:
\begin{enumerate}
\item \label{ck} $D$ limits to a divisor on $C_{k-1}$ which is collapsed to a point in $\pp^{r-1} \subset \pp^r$.
\item \label{sk} $D$ limits to a divisor on $S_{k}$ (which is necessarily collapsed to a point in $\pp^r$).
\end{enumerate}

Case \eqref{ck}: 
By inducting on the number of nonnegative parts of $\vec{e}$,
 we may assume $\vec{e}$ has exactly four parts of degree $0$, i.e., $r = 3$. 
By Lemma \ref{lem:3parts},
there are finitely many bad degree \(2\) effective divisors $D$ on $C_{k-1}$ that are collapsed to a point $\alpha(D) \in \pp^2 \subset \pp^3$, so it suffices to show that a general deformation of $X$ will separate any given bad $D$.
Let $M = N_{\Sigma_{k-1}}|_{C_{k-1}} = \O_{\pp E^\vee}(1)|_{C_{k-1}}$ (see \eqref{eq:normal_bundle_sigma} in case $e_k = 0$).
We have an identification
\[M|_D \simeq \alpha^*N_{\pp^2/\pp^3}|_{\alpha(D)}. \]
A deformation of $X$ separates $D$ in the map to $\pp^3$
if it does not take values in the subspace 
\[\Delta = \alpha^*H^0(N_{\pp^2/\pp^3}|_{\alpha(D)}) \subset H^0(\alpha^*N_{\pp^2/\pp^3}|_{\alpha(D)}) = M|_D. \]
Thus it suffices to show that the image of
\[H^0(N_X) \rightarrow H^0(M(\Gamma)) \rightarrow M|_{D}\]
does not lie in the subspace $\Delta$. By Lemma~\ref{Cor:Surj}, the first map above is surjective. Since $\Gamma$ is general with $\# \Gamma \geq h^1( M) + 1$, the image of $H^0(M(\Gamma)) \rightarrow M|_{D}$ does not lie in $\Delta$ by Lemma~\ref{Lem:Eval}.

Case \eqref{sk}: Every degree \(2\) divisor $D$ on $S_k$ is collapsed to a point in $\pp^r$. 
By \eqref{nsk}
we have a filtration
\begin{equation} \label{eq:filt}
0 \rightarrow \bigoplus_{i \leq k-4} \O(-e_i) \rightarrow N_{S_k} \rightarrow \O^{\oplus 3} \rightarrow 0.
\end{equation}
We have that $N_X|_{S_k}$ is a positive modification of $N_{S_k}$.  Since $C_{k-1}$ does not meet $\Sigma_{k-4}$ by Theorem \ref{thm:main_inductive}\eqref{main_inductive:subscroll}, these modifications
do not meet the subbundle in \eqref{eq:filt}, i.e., \(k=0\) in \cite[Equation (7)]{interpolation}.
Hence, we obtain a filtration
\begin{equation} \label{filtN}
0 \rightarrow  \bigoplus_{i \leq k-4} \O(-e_i) \rightarrow N_X|_{S_k} \rightarrow Q \rightarrow 0,
\end{equation}
where $Q$ is a positive modification of $\O^{\oplus 3}$ at the $q_i$. For $i \leq k-4$, we have $e_i \leq 0$ by assumption, so $h^1( \bigoplus_{i \leq k-4} \O(-e_i))= 0$. Hence, we have a surjection $H^0(N_X|_{S_k}) \to H^0(Q)$.

Since a general section in $H^0(N_X)$ smooths all the $q_i$, and the images of the lines \(L_i\) are distinct, it suffices to show that it separates any degree
 $2$ effective divisor $D$ in $S_k \smallsetminus \{q_1, \ldots, q_m\}$. For such $D$, we have a natural trivialization of $Q|_D$.
If a section in $H^0(N_X)$ has non-constant image under
\[H^0(N_X)  \to H^0(N_X|_{S_k}) \to H^0(Q) \to Q|_{D},\]
then the corresponding deformation separates $D$.
We know $H^0(N_X) \to H^0(N_X|_{S_k})$ is surjective and $H^0(N_X|_{S_k}) \to H^0(Q)$ is surjective.
Therefore, we wish to show that a general section of $Q$
misses the constant subspace when evaluated in $Q|_{D}$ for all degree $2$ effective divisors $D$ supported on \(S_k \smallsetminus \{q_1, \dots, q_m\}\).

There is a $2$-dimensional family of degree $2$ effective divisors $D$ in $S_k \smallsetminus \{q_1, \ldots, q_m\}$. For each such $D$, we get a ``bad subspace" of $H^0(Q)$ defined as the preimage of the constant sections $\Delta \subset Q|_{D}$ under
 $H^0(Q) \to  Q|_{D}$.
If $m \geq  3$, then we claim that all summands of $Q$ are positive. Indeed, because the $p_i$ are general, and the image of \(C_{k - 1}\) in \(\pp^{r - 1}\) is nondegenerate, the modifications cannot all lie in a proper trivial subbundle. Then, it is a codimension $3$ condition on the space of global sections $H^0(Q)$ to lie in the constant subspace along $D$. The union of a $2$-dimensional family of codimension $3$ ``bad subspaces" of $H^0(Q)$ cannot be all of \(H^0(Q)\), so a general section will not lie in any bad subspace.
If $m = 1$ or $m = 2$, then $N_X|_{S_k}$ has an $\O(1)$ summand, and every section that is non-constant in the $\O(1)$ component is non-constant along every $D$.

\subsubsection{At least three nonnegative parts at least one of which is positive} \label{edge00a}
If $e_{k-1} > 0$, the result follows from Remark~\ref{rem:dumb};
we therefore suppose $e_{k-2} = e_{k-1} = 0$ and $b \colonequals e_k > 0$.

 Let $X = C_{k-1} \cup L_1 \cup \cdots \cup L_m \cup S_k \subset \pp E^\vee$ be our degenerate curve. The complete linear series for $\O_{\pp E^\vee}(1)$ sends $X$ to $\pp^{a+b+1}$ by sending $C_{k-1}$ onto a $\pp^a$ and $S_k$ onto a degree $b$ rational normal curve in a complementary $\pp^{b}$. Write $\alpha \colon C_{k-1} \to \pp^a$ for the restriction of $|\O_{\pp E^\vee}(1)|$ to $C_{k-1}$.
Our goal is to show that for a general deformation $C$ of $X$, every degree $2$ effective divisor on $C$ embeds into $\pp^{a+b+1}$. 
The only degree \(2\) effective divisors on $X$ that are collapsed to a point in $\pp^{a+b+1}$ are divisors on $C_{k-1}$ that live in a fiber of $\alpha \colon C_{k-1} \to \pp^a$.

Let $\Gamma = \{p_1, \ldots, p_m\}$ be the points on $C_{k-1}$ where the lines $L_1, \ldots, L_m$ are attached.
Write
\begin{equation} \label{Ldef}
\L \colonequals N_{\Sigma_{k-1}}|_{C_{k-1}} = \O_{\pp E^\vee}(1)|_{C_{k-1}} \otimes f^* \O_{\pp^1}(-e_{k})  = \alpha^* \O_{\pp^1}(1) \otimes f^*\O_{\pp^1}(-b),
\end{equation}
where the second equality follows from \eqref{eq:normal_bundle_sigma}.
By Lemma \ref{lem:Nx_restC} we have $N_{X}|_{C_{k-1}}/(N_{C_{k-1}/\Sigma_{k-1}}) = \L(\Gamma)$.

Because $b> 0$, the map $\pp(E^\vee/F_{k-3}) \to \pp^{a+b+1}$ is an embedding away from the codimension $1$ subscroll $\pp(F_{k-1}/F_{k-3}) \subset \pp(E^\vee/F_{k-3})$.
If a section of $N_{X}$ has image in $\L(\Gamma)$ that does not vanish along a degree \(2\) effective divisor $D$ on $C_{k-1}$, then the corresponding deformation of $X$ separates $D$.
Since $H^0(N_X) \to H^0(\L(\Gamma))$ is surjective by Lemma~\ref{Cor:Surj}, it suffices to show that, for general $\Gamma$, a general section of $\L(\Gamma)$ has a vanishing locus which contains no degree $2$ effective divisor contained in a fiber of $\alpha$.

If $e_{k-3} = 0$, then by Lemma \ref{lem:3parts}, we know that $\alpha\colon C_{k-1} \to \pp^a$ is birational onto its image. Thus, there are finitely many degree $2$ effective divisors $D$ on $C_{k-1}$ that are collapsed by $\alpha$. For each such $D$, Lemma \ref{Lem:Eval} applied to $\Delta = 0 \subset \L|_D$ shows that a general section of $\L(\Gamma)$ is nonzero on $D$.

For the remainder, we therefore assume $e_{k-3} < 0$, and hence $a = 1$. 
If $\deg \alpha = 1$, then no degree \(2\) effective divisor of $X$ is collapsed in the map to $\pp^{b+2}$. We therefore assume for the remainder of this section that $\deg \alpha \geq 2$. In this case, $\alpha\colon C_{k-1} \to \pp^1$ collapses infinitely many degree $2$ effective divisors $D$, making this case more difficult than the previous paragraph.

Throughout, we write $g' \colonequals u(\vec{e}_{<k})$ for the genus of $C_{k-1}$. 
If $\vec{e} = (-1, \ldots, -1, 0, 0, b)$, then we have
 $g' = u(\vec{e}_{<k}) = u(-1, \ldots, -1, 0, 0) = 0$, and
so $\deg \alpha = g' - 1 + (k-1) - \deg(\vec{e}_{<k}) = 1$.
Thus, we may also assume that $e_1 < -1$, and consequently, the quantity $m$ defined in \eqref{mdef} satisfies
\begin{equation} \label{mgeq3}
m \geq 1+ \sum_{j < k} (e_k - e_j - 1) \geq 3.
\end{equation}

If $m \geq g'$, then $\L(\Gamma)$ is a general line bundle, so it has a section with the desired property. We can therefore assume $m < g'$ for the remainder.
 Since we are assuming $b > 0$, we have 
\[h^0(\L) = h^0(f_*\L) = h^0\left(\O(\vec{e}_{<k}) \otimes \O(-b)\right)= 0.\]
We claim that it suffices to consider the case of minimal $m$, namely $m = h^1(\L) + 1$. Indeed, if $m > h^1(\L) + 1$, let $\Gamma' \subset \Gamma$ be a subset of $h^1(\L) + 1$ points. Then, if we have treated the case of minimal $m$, it follows that a general section in
$H^0(\L(\Gamma')) \subset H^0(\L(\Gamma))$ has the desired property.
We therefore assume $m = h^1(\L) + 1$ for the remainder of this section.
From \eqref{mdef} and our assumptions $e_1 < -1$ and $e_i \leq -1$ for $i \leq k-3$, we see that
\[ m = 1+\sum_{i=1}^{k-1} (b - e_i - 1) > 1+ (k-3)b + 2(b-1) = (k-1)b - 1 \geq (k-2)b.\]

\begin{lem} 
Let $x_1, \ldots, x_{m - (k-2)b}$ be general points on $C_{k-1}$. Then there exists
$\Gamma \in \Sym^{m}C_{k-1}$  such that $h^0(\L(\Gamma)) = 1$, and the unique section of $\L(\Gamma)$ vanishes along the $x_i$ but at no other points in the fibers $\alpha^{-1}(\alpha(x_i))$.
\end{lem}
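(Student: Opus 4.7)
The plan is to translate the conclusion into Abel--Jacobi language on $C_{k-1}$ and carry out a dimension count on symmetric products; the key numerical input is the inequality $m > (k-2)b$ from \eqref{mgeq3}, together with the generality of $C_{k-1}$ afforded by Theorem~\ref{thm:main_inductive}.

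Since $h^0(\L) = 0$ and $h^1(\L) = m-1$, Riemann--Roch gives $\deg \L = g' - m$ and hence $\chi(\L(\Gamma)) = 1$ for every effective $\Gamma$ of degree $m$. Writing $X \colonequals x_1 + \cdots + x_{m-(k-2)b}$, the requirement that a section of $\L(\Gamma)$ vanish along $X$ with zero divisor $X + Y$ is equivalent to an isomorphism $\L(\Gamma) \simeq \O(X+Y)$ for some effective divisor $Y$ of degree $g' - m + (k-2)b$. I would therefore introduce the incidence
\[
\I \colonequals \left\{ (\Gamma, Y) \in \Sym^m C_{k-1} \times \Sym^{g'-m+(k-2)b} C_{k-1} \,:\, \L(\Gamma) \simeq \O(X+Y) \right\},
\]
realized as the fiber of the difference map $(\Gamma, Y) \mapsto [\Gamma] - [Y]$ over $[X] - [\L]$ in $\Pic(C_{k-1})$. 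Because the sum of the relevant degrees, $m + (g' - m + (k-2)b) = g' + (k-2)b$, is at least $g'$, the difference map is surjective by a Jacobi-inversion argument; hence $\I$ is nonempty of expected dimension $(k-2)b$, which is at least $2$ under the standing hypotheses $k \geq 4$ and $b \geq 1$.

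Next I would arrange the ``no extra vanishing'' condition. For each point $p$ in the finite set $B \colonequals \bigcup_i (\alpha^{-1}(\alpha(x_i)) \smallsetminus \{x_i\})$, the subvariety $\I_p \subset \I$ defined by $Y \geq p$ is, via the substitution $Y = Y' + p$, the fiber of the analogous difference map on $\Sym^m C_{k-1} \times \Sym^{g'-m+(k-2)b-1} C_{k-1}$ over $[X] + [p] - [\L]$; the parallel dimension count gives that $\I_p$ has expected codimension $1$ in $\I$. Generality of the $x_i$ (and hence of $B$) ensures that this expected codimension is achieved, so a general $(\Gamma, Y) \in \I$ satisfies $\Supp(Y) \cap B = \emptyset$, yielding a $\Gamma$ whose section (once known to be unique) has the desired vanishing behavior.

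The final step is to verify $h^0(\L(\Gamma)) = 1$ for a general such $(\Gamma, Y) \in \I$. Since $\chi = 1$, Serre duality reduces this to $h^0(K_{C_{k-1}} - X - Y) = 0$, i.e., that the image of $\I$ in $\Pic^{g'}(C_{k-1})$ is not contained in the classical Brill--Noether locus $W^1_{g'}(C_{k-1})$. I expect this step to be the main obstacle; I would address it by exhibiting a single pair $(\Gamma_0, Y_0) \in \I$ for which $h^0(\L(\Gamma_0)) = 1$---leveraging the Brill--Noether generality of the $(k-1)$-gonal curve $C_{k-1}$ from Theorem~\ref{thm:main_inductive}\eqref{main_inductive:general}, e.g.\ by selecting $Y_0$ whose support avoids the finitely many special divisors on $C_{k-1}$ of relevant degree---and invoking upper semicontinuity of $h^0$ across $\I$.
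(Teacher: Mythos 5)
Your approach is genuinely different from the paper's: you set up an abstract incidence correspondence
\(\I \subset \Sym^m C_{k-1} \times \Sym^{g'-m+(k-2)b} C_{k-1}\)
and argue by dimension count, whereas the paper constructs an explicit \(\Gamma\) from a general fiber \(\{h_1,\dots,h_d\}\) of \(\alpha\) (namely \(\Gamma = X + \sum_{i\le b}(f^{-1}(f(h_i))\smallsetminus h_i)\)), reducing the cohomology computation to the evident fact \(h^0(\O(h_{b+1}+\cdots+h_d))=1\) for a degree-\(d\) base-point free pencil, followed by twisting up by the general points \(x_i\) to kill \(h^1\).

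However, your proof has a genuine gap precisely where you flag it, and the fix you sketch does not close it. You want to produce \emph{one} pair \((\Gamma_0,Y_0)\in\I\) with \(h^0(\L(\Gamma_0))=h^0(\O(X+Y_0))=1\), and you suggest doing so ``by selecting \(Y_0\) whose support avoids the finitely many special divisors.'' But \(Y_0\) cannot be chosen freely: membership in \(\I\) forces \(\O(X+Y_0)\otimes\L^{-1}\) to have a section, and this bundle has degree \(m<g'\), so the locus of admissible \(Y_0\) is the image of \(\I\) under projection to \(\Sym^{g'-m+(k-2)b}C_{k-1}\), which has dimension at most \((k-2)b\). Since \(m<g'\) in this branch of the argument, \((k-2)b < g'-m+(k-2)b=\dim\Sym^{g'-m+(k-2)b}C_{k-1}\), so the projection is far from dominant and you are not free to take \(Y_0\) general. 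In other words, the two conditions \(h^0(\O(X+Y_0)\otimes\L^{-1})>0\) and \(h^0(K-X-Y_0)=0\) must be arranged \emph{simultaneously}, and that is exactly the hard content that the paper's explicit choice of \(\Gamma\) is engineered to produce. A secondary issue: your ``no extra vanishing'' step and your ``\(h^0=1\)'' step each identify a dense open in \(\I\), and to conclude you need both opens to meet --- this would follow if \(\I\) were irreducible, but your fiber-of-the-difference-map description does not establish irreducibility (fibers need not be irreducible, and the special fiber over \([X]-[\L]\) could even have excess dimension). I would recommend switching to the paper's constructive route, which sidesteps both issues; your dimension-count framework is a reasonable strategy in the large, but the base case it requires is itself the heart of the lemma.
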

\begin{proof}
Let $d = \deg \alpha$ and let $\{h_1, \ldots, h_d\}$ be the points of a general fiber of $\alpha$. Define
\[\Gamma\colonequals \{x_1, \ldots, x_{m-(k-2)b}\} \cup \bigcup_{i \leq b} \{f^{-1}(f(h_{i})) \smallsetminus h_{i} \}.\]
Now, using \eqref{Ldef}, we see 
\[\L(\Gamma) = \O\left(\Gamma + h_1 + \ldots + h_d - \sum_{i = 1}^b f^{-1}(f(h_i))\right)  = \O(h_{b+1} + \ldots + h_d + x_1 + \ldots + x_{m-(k-2)b}).\]
In particular, $\L(\Gamma)$ has a section that vanishes along the $x_i$ but at no other points in the fibers $\alpha^{-1}(\alpha(x_i))$.
It remains to see that $h^0(\L(\Gamma)) = 1$.

First, we observe that $h^0(\O(h_{b+1} + \ldots + h_d)) = 1$ since
 $h^0(\O(h_1 + \ldots + h_d)) = 2$ and $\O(h_1 + \ldots + h_d)$ is basepoint free.
By Riemann--Roch, 
\[\chi(\O(h_{b+1} + \ldots + h_d)) = \chi(\O(h_1 + \ldots + h_d)) - b = \chi(\L) + b(k-1) - b = [0 - (m+1)] + b(k-2).\]
Hence, $h^1(\O(h_{b+1} + \ldots + h_d)) = m - b(k-2)$, so twisting up by this many general points kills the $h^1$ without increasing $h^0$.
\end{proof}

We now show that for $\Gamma$ general, \emph{every} zero of a general section of $\L(\Gamma)$ is isolated in its fiber under $\alpha$.
For this, consider the incidence correspondence
\[\Psi = \{(x, \Gamma)  \in C_{k-1} \times \Sym^m C_{k-1}: h^0(\L(\Gamma)(- x)) \neq 0 \}.\]
Because $\chi(\L(\Gamma)(-x)) = 0$, the locus $\Psi \subset C_{k-1} \times \Sym^m C_{k-1}$ is pure codimension $1$.

By construction, if $h^0(\L(\Gamma)) = 1$, then the fiber of $\Psi$ over $\Gamma \in \Sym^m C_{k-1}$ is the vanishing locus of the unique section. 
We have already found one such $\Gamma$ where there exists a point $x$ that is the only point of $\alpha^{-1}(\alpha(x))$ in the fiber of $\Psi \to \Sym^m C_{k-1}$ over $\Gamma$. 
It therefore suffices to show that $\Psi$ 
is irreducible, which is Lemma \ref{irr} below. To prove Lemma \ref{irr}, we first need the following.

\begin{lem} \label{lem:h0andh1}
Suppose $L$ is a line bundle on a curve $C$ with $h^0(L) \geq 2$ and $h^1(L) = 0$. Then, for general $x \in C$, the line bundle $L(x)$
 is birationally very ample.
 \end{lem}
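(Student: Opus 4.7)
The plan is a short chase through long exact sequences. First, from the short exact sequence
\[ 0 \to L \to L(x) \to L(x)|_x \to 0 \]
and the hypothesis $h^1(L) = 0$, the associated long exact sequence gives
$h^0(L(x)) = h^0(L) + 1 \geq 3$ and $h^1(L(x)) = 0$.
In particular, the map $\phi_{L(x)}\colon C \dashrightarrow \pp H^0(L(x))^\vee$ lands in a projective space of dimension at least $2$, and since $h^0(L(x)) \geq 2$ it is nonconstant, so its image is a curve.

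The main task is therefore to show that $\phi_{L(x)}$ is generically injective. The standard ``peeling off general points'' principle suffices: for any line bundle $M$ with $h^0(M) \geq 1$, the base locus of $|M|$ is a proper closed subset, so a general point $y$ is not a base point and hence $h^0(M(-y)) = h^0(M) - 1$. Applying this inductively to $L(x)$ and $L(x-y)$, each of which has at least two sections, yields that for general $y, z \in C$,
\[ h^0(L(x - y - z)) = h^0(L(x)) - 2. \]

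This last equation says precisely that a general pair $(y, z)$ imposes independent conditions on $|L(x)|$, i.e., $\phi_{L(x)}(y) \neq \phi_{L(x)}(z)$. Since a general pair of distinct points has distinct image, $\phi_{L(x)}$ has degree one onto its image, so $L(x)$ is birationally very ample. There is no real obstacle here; the argument is essentially formal given $h^1(L) = 0$, and in fact works for any $x \in C$, not just a general one.
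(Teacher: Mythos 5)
There is a genuine gap in the final step. You establish that for general $y, z \in C$, we have $h^0(L(x)(-y-z)) = h^0(L(x)) - 2$, and conclude ``$\phi_{L(x)}$ has degree one onto its image.'' But this inference is false: if $\phi_{L(x)}$ had degree $d \geq 2$ onto its image, the locus of pairs $(y,z)$ with $\phi_{L(x)}(y) = \phi_{L(x)}(z)$ would still only be a $1$-dimensional subset of the $2$-dimensional $\Sym^2 C$, so a \emph{general} pair would still have distinct images. Your argument rules out a $2$-dimensional bad locus, but birational very ampleness (for curves) requires the bad locus to be at most $0$-dimensional, i.e., that all but \emph{finitely many} degree-$2$ divisors impose independent conditions. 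This stronger conclusion cannot be reached by ``peeling off general points'' alone, and indeed that technique works in situations (e.g., $L$ a multiple of the $g^1_2$ on a hyperelliptic curve) where the conclusion is false.

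Your closing remark that the lemma ``in fact works for any $x \in C$'' is a further symptom of the same issue, and is incorrect. For instance, on a hyperelliptic curve of genus $g$ with $g^1_2$ bundle $H$, take $L = K_C + p$ for a general point $p$; then $h^0(L) = g \geq 2$ and $h^1(L) = 0$. For general $x$ the conclusion holds, but for $x = \bar p$ (the hyperelliptic conjugate of $p$) we get $L(x) = gH$, whose map to $\pp^g$ factors through the hyperelliptic map and is therefore $2$-to-$1$, not birational. The paper's proof gets around all this by studying the determinantal locus $P \subset \Sym^2 C$ of pairs that fail to impose independent conditions on $L$ itself: since $P$ is pure of dimension $1$ (or empty), and since a general pair in each component of $P$ has $h^1(L(-p-q)) = 1$ and so becomes independent after twisting by a general $x$, the bad locus for $L(x)$ is forced to be a proper closed subset of $P$, hence finite. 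That dimension-shaving step via $P$ is the missing idea in your proposal.
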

 \begin{proof}
 Consider $P = \{(p, q) \in \Sym^2 C : h^0(L(-p - q)) \geq h^0(L) - 1\}$. Since it is defined by a determinantal condition, $P$ is either empty or pure dimension $1$.
If $(p, q) \notin P$, then $p$ and $q$ already impose independent conditions on $L$, so they impose independent conditions on $L(x)$ for all $x \in C$.
Now fix some component of $P$ and a general $(p, q)$ in that component. 
 It suffices to show that, for general $x \in C$, the points $p$ and $q$ impose independent conditions on $L(x)$.
Since every component of $P$ has dimension $1$, we may assume that $p$ and $q$ are not both basepoints of $L$, so $h^0(L(-p-q)) = h^0(L) - 1$, and therefore $h^1(L(-p-q)) = 1$. Since $x$ is general, we have $h^1(L(x)(-p-q)) = 0$. Hence, $h^0(L(x)(-p-q))= h^0(L(x)) - 2$, as desired.
 \end{proof}

\begin{lem} \label{irr}
If $m < g'$, then the variety $\Psi$ is irreducible.
\end{lem}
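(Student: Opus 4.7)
The plan is to realize $\Psi$ up to birational equivalence as a fiber product over $\Pic^{g'}(C_{k-1})$ (writing $g' = u(\vec{e}_{<k})$ for the genus of $C_{k-1}$), and then deduce irreducibility via a monodromy argument on the universal effective divisor of degree $g'$.

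First, I introduce the auxiliary variety
\[
\tilde\Psi := \{(x, D, \Gamma) \in C_{k-1} \times \Sym^{g'-1} C_{k-1} \times \Sym^m C_{k-1} : x + D \sim \L + \Gamma\}
\]
and the forgetful map $\tilde\Psi \to \Psi$, $(x, D, \Gamma) \mapsto (x, \Gamma)$. This map is birational: for a general point $(x, \Gamma) \in \Psi$ we have $\chi(\L + \Gamma - x) = 0$ with $h^0(\L + \Gamma - x) = 1$, so $D$ is uniquely determined as the unique effective divisor in $|\L + \Gamma - x|$. Hence it suffices to show that $\tilde\Psi$ is irreducible.

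Now let $\mathcal{D}_{g'} := C_{k-1} \times \Sym^{g'-1} C_{k-1}$ be the total space of the universal effective divisor of degree $g'$, with the natural morphism $\alpha\colon \mathcal{D}_{g'} \to \Pic^{g'}(C_{k-1})$, $(x, D) \mapsto \O(x + D)$; and let $\beta\colon \Sym^m C_{k-1} \to \Pic^{g'}(C_{k-1})$, $\Gamma \mapsto \L + \Gamma$. Then $\tilde\Psi$ is precisely the fiber product $\mathcal{D}_{g'} \times_{\Pic^{g'}(C_{k-1})} \Sym^m C_{k-1}$. Since $g' = g(C_{k-1})$, the Abel--Jacobi map $\Sym^{g'} C_{k-1} \to \Pic^{g'}(C_{k-1})$ is birational, so $\alpha$ is generically finite of degree $g'$; and since $\mathcal{D}_{g'}$ is irreducible (as a product of irreducibles), the cover $\alpha$ is connected. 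Thus the monodromy of $\pi_1(\Pic^{g'}(C_{k-1}))$ on a generic fiber of $\alpha$ is transitive on its $g'$ sheets. To transfer this transitivity to the pulled-back cover $\tilde\Psi \to \Sym^m C_{k-1}$, I invoke the classical result (Macdonald, \emph{Symmetric products of an algebraic curve}) that for $m \geq 1$ the map $\beta$ induces a surjection $\pi_1(\Sym^m C_{k-1}) \twoheadrightarrow \pi_1(\Pic^{g'}(C_{k-1}))$, identifying both with $H_1(C_{k-1}, \zz)$. Hence the monodromy of the pullback is also transitive, so $\tilde\Psi$ is connected, and being equidimensional of dimension $m$, it is irreducible. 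Therefore $\Psi$ is irreducible.

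The main obstacle is the monodromy transfer in the final step, which requires the surjectivity of the map on (étale) $\pi_1$ coming from $\beta$. In positive characteristic one must work with étale fundamental groups and appeal to the analogous surjectivity statement. An alternative, more elementary route is to establish transitivity directly by specialization within $\Sym^m C_{k-1}$: deform $\Gamma$ so that the unique effective divisor $E_\Gamma \in |\L + \Gamma|$ acquires a node, producing a simple transposition in the monodromy on the $g'$ sheets, and then show that enough such transpositions are obtainable to generate a transitive subgroup (it suffices to generate any regular abelian subgroup of order $g'$).
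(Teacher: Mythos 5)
Your proposal takes a genuinely different route from the paper (which applies Serre duality to rewrite $h^0(\L(\Gamma)(-x)) = h^0(K\otimes\L^{-1}(x)(-\Gamma))$, shows $K\otimes\L^{-1}(x)$ is birationally very ample via Lemma~\ref{lem:h0andh1}, and then invokes the Uniform Position Principle to conclude that the space of $\Gamma$ contained in hyperplane sections of $C_{k-1}\to\pp^s$ is irreducible). Your setup via $\tilde\Psi$ and the fiber product over $\Pic^{g'}(C_{k-1})$ is a reasonable way to package the same information, but the monodromy step has a genuine gap.

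The gap is in the transfer of transitivity. The map $\alpha\colon C_{k-1}\times\Sym^{g'-1}C_{k-1}\to\Pic^{g'}(C_{k-1})$ is only \emph{generically} finite: it is branched (and even has positive-dimensional fibers) over a large divisor. Its monodromy representation is therefore a representation of $\pi_1(U)$, where $U\subset\Pic^{g'}$ is the open locus over which $\alpha$ is finite \'etale, and this representation does \emph{not} factor through $\pi_1(\Pic^{g'})$. To conclude that the pullback $\tilde\Psi\to\Sym^m C_{k-1}$ has connected total space over its \'etale locus, what you actually need is that the image of $\pi_1(\beta^{-1}(U))\to\pi_1(U)\to S_{g'}$ is still transitive. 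Macdonald's isomorphism $\pi_1(\Sym^m C_{k-1})\cong\pi_1(\Pic^{g'}(C_{k-1}))\cong H_1(C_{k-1},\zz)$ is a statement about the full smooth projective varieties; it gives no control over the much larger open-variety fundamental groups $\pi_1(\beta^{-1}(U))$ and $\pi_1(U)$, through which the monodromy actually passes. So ``the monodromy of $\pi_1(\Pic^{g'})$ on the fiber'' is not a well-defined object here, and the inference ``$\pi_1(\Sym^m)\twoheadrightarrow\pi_1(\Pic^{g'})$ hence transitivity pulls back'' is a non sequitur. (A smaller issue: ``connected and equidimensional of dimension $m$, hence irreducible'' is also not valid as stated; one really wants to argue on the open \'etale locus and take closures.)

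The Uniform Position Principle is precisely the device the paper uses to certify the transitivity (in fact, full-symmetric) monodromy of the relevant family of divisors on $C_{k-1}$, and avoiding it requires reproving a version of it. Your ``alternative route'' --- specialize $\Gamma$ so the divisor $E_\Gamma$ acquires a node, producing transpositions --- is close in spirit to the standard proof of UPP, but as sketched it is not an argument: you would have to show such degenerations exist within $\Sym^m C_{k-1}$ (not just within $\Pic^{g'}$), that they yield \emph{simple} transpositions between prescribed sheets, and that the resulting transpositions generate a transitive subgroup. (Also, a set of transpositions never generates a regular abelian group of order $g'>2$, so the parenthetical criterion is misstated.) As written, the proposal does not establish the lemma.
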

\begin{proof}
First we show that no fiber of $\Psi$ over $C_{k-1}$ is the entire space $\Sym^m C_{k-1}$.
Let $g'$ be the genus of $C_{k-1}$. 
Recall that $\chi(\L(\Gamma)(-x)) = 0$ for any $x\in C_{k-1}$.
If $h^0(\L(\Gamma)(-x)) > 0$ for all $\Gamma$, then we would have $h^1(\L(\Gamma)(-x)) > 0$ for all $\Gamma$, so  $h^1(\L(-x)) \geq m + 1$. But this is impossible since we are assuming $h^1(\L) = m-1$. Since $\Psi$ is pure codimension $1$, every component must dominate $C_{k-1}$.

It remains to show that for a general $x \in C_{k-1}$, the fiber of $\Psi$ over $x$ is irreducible. Let $K$ denote the canonical bundle on $C_{k-1}$.
Because $\chi(\L(\Gamma)(-x)) = 0$, we have
\begin{equation}\label{cgs}
h^0(\L(\Gamma)(-x)) = h^1(\L(\Gamma)(-x)) = h^0(K \otimes \L^{-1}(x)(-\Gamma)).
\end{equation}
Note that $h^0(K \otimes \L^{-1}) = h^1(\L) = m - 1 \geq 2$ by \eqref{mgeq3} and $h^1(K \otimes \L^{-1}) = h^0(\L) = 0$. 
Therefore, Lemma \ref{lem:h0andh1} says that $K \otimes \L^{-1}(x)$ is birationally very ample. 
Let $s = h^0(K \otimes \L^{-1}(x)) - 1$.
The cohomology group in \eqref{cgs} is non-zero if and only if the image of $\Gamma$ under the map $C_{k-1} \to \pp^s$ given by $|K \otimes \L^{-1}(x)|$
 is contained in a hyperplane. Hence, the fiber of $\Psi$ over $x$ consists of those $\Gamma$ that are contained in hyperplane sections of $C_{k-1} \to \pp^s$.
Since $C_{k-1} \to \pp^{s}$ is birationally very ample, we may use the Uniform Position Principle if the characteristic is $0$ or if $s \geq 4$ for any characteristic \cite{Borys}.
In these cases, the Uniform Position Principle says that the collection of $\Gamma$ contained in hyperplane sections of the image is irreducible. 

The cases where $s \leq 3$ occur when $m = 3$ or $4$. If $m = 3$, then $\vec{e} = (-2,0,0,1)$, which has $g' = 2$. If $m = 4$, then $\vec{e} = (-2, 0, 0, 2), (-2,-1,0,0,1),$ or $(-3, 0, 0,1)$, which have $g' = 2, 2,$ and $4$ respectively. In each of these cases $m \geq g'$.
\end{proof}

 \subsection{Necessary and sufficient conditions for very ampleness}
 The remainder of the paper is devoted to proving the following theorem. 

\begin{thm}
\label{Thm:VA}
A general line bundle in $\we$ is  very ample if and only if either 
  \begin{enumerate}
\item\label{va-1} $e_{k-2} \geq 0$ and $r = h^0(\O(\vec{e})) - 1 \geq 3$,
\item\label{va-2} $k = 3$ and $e_2 \geq 1$ and $e_3 - e_2 \leq 1$ and $\rho'(g,\vec{e}) = 0$,
\item\label{va-3} $k = 2$ and $e_{1} \geq 1$, 
\item\label{va-4} $k = 2$ and $(e_1,e_2) = (0,g)$ or $(e_1,e_2) = (0, g+1)$ (these are degree $2g +1$ and $2g+2$),
\item\label{va-5} $g = 0$ and $e_{k-1} \geq 0$,
\item\label{va-6} $g = 1$ and $\vec{e} = (-1, 0, 1)$ or $\vec{e} = (-1,\ldots,-1,0,0,0)$ (smooth plane cubic), or
\item\label{va-7} $g = 3$ and $\vec{e} = (-2, 0, 1)$ or $\vec{e} = (-2,-1,\ldots,-1,0,0,0)$ (smooth plane quartic).
\end{enumerate}
\end{thm}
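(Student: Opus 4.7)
The plan parallels the proof of Theorem~\ref{rel-va}: first use the characterization of birational very ampleness (Lemma~\ref{lem:3parts}) to cut down to a short list of candidate splitting types, then for each surviving regime use an intersection-theoretic count to isolate the numerical coincidences where actual (not just birational) very ampleness holds. Cases~\eqref{va-1} is already handled by Theorem~\ref{Thm:VA-intro}, so the work lies in analyzing the remaining regimes.

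For \textbf{necessity}: since very ampleness implies birational very ampleness, Lemma~\ref{lem:3parts} restricts us to (a) $e_{k-2}\geq 0$; (b) $e_k\geq 1$ and $e_{k-1}\geq 0 > e_{k-2}$; or (c) $g=0$ and $e_{k-1}\geq 0$. Regime (c) is exactly case~\eqref{va-5}. In regime (a), if $r\geq 3$ we land in~\eqref{va-1}, while if $r=2$ then $\vec{e}=(e_1,\dots,e_{k-3},0,0,0)$, the image in $\pp^2$ is a smooth plane curve, and the genus-degree formula combined with $\deg\L=g+k-1+\deg\vec{e}$ and $\rho'(g,\vec{e})\geq 0$ forces $(g,\vec{e})$ into~\eqref{va-6} or~\eqref{va-7}. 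For regime (b) we run an intersection-theoretic count mirroring Section~\ref{stronger} but on (the complement of the diagonal in) $C\times C$: realize the locus of collapsed pairs as the degeneracy locus where the pullback of $(E^\vee/F_-)^\vee$, viewed as the bundle of linear forms on $\pp(E^\vee/F_-)$, fails to separate two points; subtract the excess along the diagonal (the analog of the $Z$-correction in~\eqref{hi}) and the contribution from already-collapsed \emph{fibral} pairs (already enumerated in Section~\ref{stronger}); and read off a Chern-class count $\tilde{N}$ whose vanishing pins down precisely~\eqref{va-2}, \eqref{va-3}, and \eqref{va-4}.

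For \textbf{sufficiency}: case~\eqref{va-5} is immediate since $C\simeq \pp^1$ and $e_{k-1}\geq 0$ implies $\deg\L\geq 1$. Cases~\eqref{va-3} and \eqref{va-4} reduce to the classical very ampleness criterion on a hyperelliptic curve ($\L$ is very ample iff $\L\otimes f^*\O_{\pp^1}(-1)$ has no sections), which is verified directly from the splitting type. For~\eqref{va-6} and~\eqref{va-7}, we realize a general smooth plane cubic (resp.\ quartic) as a $k$-gonal curve via projection from a general point of the curve (for $k=3$) or an external point (for larger $k$), and compute that the resulting $f_*\L$ has the stated splitting type via a standard Koszul-resolution calculation on the plane curve. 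Finally,~\eqref{va-2} is handled by again deforming the nodal curve $X$ of Section~\ref{sec:our_degeneration}: the vanishing $\tilde N=0$ ensures only finitely many pairs of points on $X$ could fail to be separated, and we eliminate each such pair by producing a section of $N_X$ with nonzero image at that pair, using Lemma~\ref{Cor:Surj} and Lemma~\ref{Lem:Eval} (with $\Delta\subset N_{\Sigma_{k-1}}|_{\{x,y\}}$ taken as the ``fixed image'' subspace, exactly as in Section~\ref{mc}).

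The main obstacle I anticipate is the Chern-class bookkeeping in regime~(b) on the necessity side: unlike the purely fibral count of Section~\ref{stronger}, the collapse locus in $C\times C$ mixes fibral and non-fibral contributions, so the diagonal excess and the fibral-collapse excess must be subtracted separately, and the resulting closed-form expression for $\tilde N$ shown to vanish on \emph{precisely} the short numerical list in~\eqref{va-2}--\eqref{va-4} (plus the coincidences giving~\eqref{va-6}, \eqref{va-7}). Verifying this combinatorial dichotomy across all $(g,\vec{e})$ with $e_k\geq 1$, $e_{k-1}\geq 0$, $e_{k-2}<0$ is where most of the effort will go; the sufficiency arguments for~\eqref{va-2}--\eqref{va-7} should be comparatively mechanical once the deformation-theoretic toolkit of Sections~\ref{Sec:Smooth}--\ref{Sec:Subscroll} is in hand.
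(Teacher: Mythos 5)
The core difference is the starting point: you launch the case analysis from Lemma~\ref{lem:3parts} (birational very ampleness), whereas the paper launches from Theorem~\ref{rel-va} (relative very ampleness). This is not a cosmetic distinction. Theorem~\ref{rel-va} already encodes the numerical coincidences --- the conditions $\rho'(g,\vec e)=0$, $k=2$ or $3$, $g=0$ --- so once relative very ampleness is in hand, the residual question is purely geometric: does the map from the nonnegative scroll $\pp(E^\vee/F_-)$ to $\pp^r$ remain injective along $C$? When $e_1\geq 1$ (or $e_2\geq 1$ for $k=3$) the scroll itself embeds and there is nothing to check; when $e_1=0$ and $k=2$ the answer is a one-line intersection number $C\cdot D = g+1-e_2$ on a Hirzebruch surface; and when the image is a plane curve the genus--degree formula does the rest. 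This replaces your proposed intersection-theoretic count $\tilde N$ on $(C\times C)\smallsetminus\Delta$ entirely. Your regime (b) is broader than necessary, so you are forced into that much harder bookkeeping: you would have to subtract both the diagonal excess \emph{and} the fibral excess $N$ from Section~\ref{stronger} (which is itself only computed after first establishing birational relative very ampleness), verify that the resulting count has no embedded contributions at the boundary of the diagonal compactification, and then match a closed-form polynomial identity against the entire list of $\vec e$ with exactly two nonnegative parts. None of this is carried out, and it is genuinely more delicate than the fibral count because the non-fibral collapse locus is not a pure degeneracy locus of a bundle map over a base $\pp^1$.

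There is also a concrete error in your sufficiency argument for case~\eqref{va-4}. The proposed hyperelliptic criterion --- that $\L$ is very ample iff $\L\otimes f^*\O_{\pp^1}(-1)$ has no sections --- is false. Take $g=2$, $k=2$, $(e_1,e_2)=(0,2)$: then $\deg\L = 2g+1 = 5$, which is very ample on any curve, yet $f_*(\L\otimes f^*\O(-1)) = \O(-1)\oplus\O(1)$ has $h^0=2 \neq 0$. The correct analysis in the paper is that, once relative very ampleness is known, non-fibral pairs collapse only if they meet the contracted directrix $\pp F_-$, and $C\cdot D = g+1-e_2$ counts those meetings; this forces $e_2\in\{g,g+1\}$. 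Your Koszul-resolution route to~\eqref{va-6} and~\eqref{va-7} would likely work but is unnecessary: the paper gets both the necessity and the sufficiency of those cases from a single inequality $g\geq (2g+3)(2g)/18$ derived from $\rho'\geq 0$ together with the genus--degree formula. The regime-(a) analysis you give matches the paper, and regime (c) is trivial in both.
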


The main content of the above theorem is the sufficiency of part \eqref{va-1}, which is
Theorem \ref{Thm:VA-intro} from the introduction. The proof of this more precise theorem relies on our precise characterization of relative very ampleness given in Theorem~\ref{rel-va}.
 Since a very ample line bundle is relatively very ample, we consider each of the cases in Theorem~\ref{rel-va} in the next five subsections.
 
 \subsubsection{Case \eqref{va-1} of Theorem \ref{rel-va}: when \(e_{k-2}\geq 0\)}  If $e_{k-2} \geq 0$ and \(r \geq 3\), then a generic line bundle in \(\W^{\vec{e}}_{\text{BN}}\) is very ample by Theorem \ref{Thm:VA-intro}.  So it remains to consider the case that \(e_{k-2} = e_{k-1} = e_k = 0\) and \(e_{k-3} < 0\).   By assumption, \(\deg \vec{e} \leq -k + 3\).  
In this case, \(|\L|\) is birationally very ample by Lemma~\ref{lem:3parts}\eqref{pt1} and maps \(C\) to \(\pp^2\).  This map is an embedding if and only if the genus of \(C\) is equal to the arithmetic genus of a plane curve of degree \(g - 1 + k + \deg(\vec{e})\).  That is, we must have the equality
\begin{equation}\label{eq:genus_plane}
 g =  \frac{(g + k + \deg \vec{e} - 2)(g + k + \deg \vec{e} - 3)}{2}.
 \end{equation}
 On the other hand, we know that \(\rho'(g, \vec{e}) \geq 0\) and hence 
 \begin{equation}\label{eq:rhop}
 g \geq u(\vec{e}) \geq \sum_{j < k-2 \leq i} e_i-e_j-1 = 3(-k+3 - \deg \vec{e}).
 \end{equation}
Combining \eqref{eq:genus_plane} and \eqref{eq:rhop} we obtain
\[
g  \geq \frac{(g -g/3 + 1)(g  -g/3)}{2} = \frac{(2g + 3)(2g)}{18}, 
\]
and hence \(g \leq 3\).  Those genera that satisfy \eqref{eq:genus_plane} are \(g=0\), which falls into Theorem \ref{Thm:VA}\eqref{va-5}, and \(g=1\), which falls into Theorem \ref{Thm:VA}\eqref{va-6}, and \(g=3\), which falls into Theorem \ref{Thm:VA}\eqref{va-7}.
 
Having dealt with this case, we will assume for the remainder of the proof that \(e_{k-2} < 0\).

\subsubsection{Case \eqref{va-2} of Theorem \ref{rel-va}: when $p = 0$} This case is not relevant, since we assume $p = 1$.

\subsubsection{Case \eqref{va-3} of Theorem \ref{rel-va}: when \(k=3\) and \(e_2 \geq 0\) and \(e_3 - e_2 \leq 1\) and \(\rho'(g, \vec{e}) = 0\)}

If \(e_2 \geq 1\), then the map from the nonnegative scroll \(\pp(E^\vee/F_-)\) to \(\pp^r\) is an embedding, and so the composition with the map from \(C\) to the nonnegative scroll (which we already know is an embedding by Theorem~\ref{rel-va}) is also an embedding.  This is Theorem~\ref{Thm:VA}\eqref{va-2}.

It remains to consider the case that \(e_2 = 0\).  First assume that \(e_2 = e_3 = 0\).  Then \(|\L|\) maps \(C\) to \(\pp^1\), and hence is very ample if and only if \(g=0\).  This therefore falls into Theorem~\ref{Thm:VA}\eqref{va-5}.

The only remaining case is \(e_2=0\) and \(e_3 = 1\).  In this case \(C\) must again be a smooth plane curve.  A similar calculation as above
 shows that \(g=1\) or \(g=3\), which fall into cases~\eqref{va-6} and \eqref{va-7} of Theorem~\ref{Thm:VA}.

\subsubsection{Case \eqref{va-4} of Theorem \ref{rel-va}: when \(k=2\) and \(e_1 \geq 0\)}

If \(e_1 \geq 1\), then as in the previous case, the map from the nonnegative scroll \(\pp(E^\vee/F_-) = \pp E^\vee\) to \(\pp^r\) is an embedding, and so the map from \(C\) to \(\pp^r\) is also an embedding.  This is Theorem~\ref{Thm:VA}\eqref{va-3}.

If \(e_1 = 0\), then since the directrix is contracted by the complete linear system of \(\O_{\pp E^\vee}(1)\), it suffices to determine when the curve meets the directrix at most once.  This is an intersection theory calculation on the Hirzebruch surface \(\pp(\O \oplus \O(-e_2))\).  If \(F\) denotes the class of a fiber and \(D\) denotes the class of the directrix, then \([C] = (e_2 + g + 1)F + 2D\). Hence, \(C \cdot D = 0\) if and only if \(e_2 = g+1\), and \(C \cdot D = 1\) if and only if \(e_2 = g\).  This is Theorem~\ref{Thm:VA}\eqref{va-4}.

\subsubsection{Case \eqref{va-5} of Theorem \ref{rel-va}: when \(g=0\) and \(e_{k-1} \geq 0\)}

In all of these cases the line bundle on \(\pp^1\) is of degree at least \(1\), and is hence very ample.  This is Theorem~\ref{Thm:VA}\eqref{va-5}.

\newpage

\bibliographystyle{amsplain.bst}
\bibliography{vampleness.bib}

\providecommand{\bysame}{\leavevmode\hbox to3em{\hrulefill}\thinspace}
\providecommand{\MR}{\relax\ifhmode\unskip\space\fi MR }
\providecommand{\MRhref}[2]{%
  \href{http://www.ams.org/mathscinet-getitem?mr=#1}{#2}
}
\providecommand{\href}[2]{#2}
\begin{thebibliography}{10}

\bibitem{CPJ1}
Kaelin Cook-Powell and David Jensen, \emph{Components of {B}rill-{N}oether loci
  for curves with fixed gonality}, Michigan Math. J. \textbf{71} (2022), no.~1,
  19--45. \MR{4389812}

\bibitem{CPJ2}
\bysame, \emph{Tropical methods in {H}urwitz-{B}rill-{N}oether theory}, Adv.
  Math. \textbf{398} (2022), Paper No. 108199. \MR{4372667}

\bibitem{EH83}
D.~Eisenbud and J.~Harris, \emph{Divisors on general curves and cuspidal
  rational curves}, Invent. Math. \textbf{74} (1983), no.~3, 371--418.
  \MR{724011}

\bibitem{im}
\bysame, \emph{Irreducibility and monodromy of some families of linear series},
  Ann. Sci. \'{E}cole Norm. Sup. (4) \textbf{20} (1987), no.~1, 65--87.
  \MR{892142}

\bibitem{farkas}
Gavril Farkas, \emph{Higher ramification and varieties of secant divisors on
  the generic curve}, J. Lond. Math. Soc. (2) \textbf{78} (2008), no.~2,
  418--440. \MR{2439633}

\bibitem{fl}
W.~Fulton and R.~Lazarsfeld, \emph{On the connectedness of degeneracy loci and
  special divisors}, Acta Math. \textbf{146} (1981), no.~3-4, 271--283.
  \MR{611386}

\bibitem{gp}
D.~Gieseker, \emph{Stable curves and special divisors: {P}etri's conjecture},
  Invent. Math. \textbf{66} (1982), no.~2, 251--275. \MR{656623 (83i:14024)}

\bibitem{bn}
P.~Griffiths and J.~Harris, \emph{On the variety of special linear systems on a
  general algebraic curve}, Duke Math. J. \textbf{47} (1980), no.~1, 233--272.
  \MR{563378 (81e:14033)}

\bibitem{JR}
David Jensen and Dhruv Ranganathan, \emph{Brill-{N}oether theory for curves of
  a fixed gonality}, Forum Math. Pi \textbf{9} (2021), Paper No. e1, 33.
  \MR{4199236}

\bibitem{Borys}
Borys Kadets, \emph{Sectional monodromy groups of projective curves}, J. Lond.
  Math. Soc. (2) \textbf{103} (2021), no.~1, 314--335. \MR{4203051}

\bibitem{kempf}
G.~Kempf, \emph{Schubert methods with an application to algebraic curves}, Pub.
  Math. Centrum, Amsterdam (1971).

\bibitem{kl}
S.~Kleiman and D.~Laksov, \emph{On the existence of special divisors}, Amer. J.
  Math. \textbf{94} (1972), 431--436. \MR{0323792}

\bibitem{LLV}
Eric Larson, Hannah Larson, and Isabel Vogt, \emph{Global {B}rill-{N}oether
  theory over the {H}urwitz space}, Geom. Topol., to appear.

\bibitem{interpolation}
Eric Larson and Isabel Vogt, \emph{Interpolation for {B}rill-{N}oether curves},
  \href{https://arxiv.org/abs/2201.09445}{arXiv:2201.09445}, 2022.

\bibitem{HannahRefined}
Hannah~K. Larson, \emph{A refined {B}rill-{N}oether theory over {H}urwitz
  spaces}, Invent. Math. \textbf{224} (2021), no.~3, 767--790. \MR{4258055}

\bibitem{P1}
\bysame, \emph{Universal degeneracy classes for vector bundles on {$\Bbb{P}^1$}
  bundles}, Adv. Math. \textbf{380} (2021), Paper No. 107563, 20. \MR{4200467}

\bibitem{sernesi}
Edoardo Sernesi, \emph{Deformations of algebraic schemes}, Grundlehren der
  mathematischen Wissenschaften [Fundamental Principles of Mathematical
  Sciences], vol. 334, Springer-Verlag, Berlin, 2006. \MR{2247603}

\end{thebibliography}

\end{document}